\newtheorem{dummy}{dummy}[section]
\newtheorem{lemma}[dummy]{Lemma}
\newtheorem{theorem}[dummy]{Theorem}
\newenvironment{customthm}[1]
{\innercustomthm}
  {\endinnercustomthm}
\newenvironment{customdef}[1]
{\innercustomdef}
  {\endinnercustomdef}
\newtheorem{conjecture}[dummy]{Conjecture}
\newtheorem{corollary}[dummy]{Corollary}
\newtheorem{proposition}[dummy]{Proposition}
\newtheorem*{theorem*}{Theorem}
\newtheorem{definition}[dummy]{Definition}
\newtheorem*{definition*}{Definition}
\newtheorem{remark}[dummy]{Remark}
\newtheorem{question}[dummy]{Question}
\newtheorem{property}{Property}
\newcommand{\mHH}{\mathrm{HH}}
\newcommand{\mHP}{\mathrm{HP}}
\newcommand{\bfA}{\mathsf{A}}
\newcommand{\bA}{\mathbb{A}}
\newcommand{\bC}{\mathbb{C}}
\newcommand{\bG}{\mathbb{G}}
\newcommand{\bL}{\mathbb{L}}
\newcommand{\bP}{\mathbb{P}}
\newcommand{\bZ}{\mathbb{Z}}
\newcommand{\cS}{\mathcal{S}}
\newcommand{\cF}{\mathcal{F}}
\newcommand{\cO}{\mathcal{O}}
\newcommand{\cL}{\mathcal{L}}
\newcommand{\cU}{\mathcal{U}}
\newcommand{\cX}{\mathcal{X}}
\newcommand{\cP}{\mathcal{P}}
\newcommand{\cHH}{\mathcal{HH}}
\newcommand{\cH}{\mathcal{H}}
\newcommand{\cY}{\mathcal{Y}}
\newcommand{\cHP}{\mathcal{HP}}
\newcommand{\scrC}{\mathscr{C}}
\newcommand{\scrR}{\mathscr{R}}
\newcommand{\ft}{\mathfrak{t}}
\newcommand{\Map}[2]{\underline{\mathrm{Map}}\left (#1,#2\right )}
\newcommand{\Mapo}[2]{\underline{\mathrm{Map}}^{0}\left (#1,#2\right )}
\newcommand{\fMapo}[2]{\underline{\widehat{\mathrm{Map}}}^{0}\left (#1,#2\right )}
\newcommand{\op}{\mathrm{op}}
\newcommand{\dSt}{\mathrm{dSt}}
\newcommand{\St}{\mathrm{St}}
\newcommand{\dAff}{\mathrm{dAff}}
\newcommand{\dStk}{\mathrm{dSt}_{k}}
\newcommand{\Aff}[1]{\mathrm{Aff}(#1)}
\newcommand{\Spec}{\mathrm{S}\mathrm{pec}\,}
\newcommand{\Pic}{\underline{\mathrm{Pic}}}
\newcommand{\Bun}{\underline{\mathrm{Bun}}}
\newcommand{\El}{\mathcal{E}ll}
\newcommand{\an}{\mathrm{an}}
\newcommand{\Zar}[1]{|#1|_{\mathrm{Zar}}}
\newcommand{\CAlg}{\mathrm{CAlg}}
\newcommand{\coCAlg}{\mathrm{coCAlg}}
\newcommand{\Fun}{\mathrm{Fun}}
\newcommand{\Id}{\mathrm{Id}}
\newcommand{\Mod}{\mathrm{Mod}}
\newcommand{\Tot}{\mathrm{Tot}}
\newcommand{\colim}{\mathrm{colim}}
\newcommand{\Perf}{\mathrm{Perf}}
\newcommand{\Frac}{\mathrm{Frac}}
 \newcommand{\twocell}[1]{\ar@{}[#1]^(.30){}="a"^(.70){}="b" \ar@{=>} "a";"b"}
 \newcommand{\ocell}[1]{\ar@{}[#1]^(.30){}="a"^(.70){}="b" \ar@{=} "a";"b"}
\newcommand{\Qcoh}{\mathrm{Qcoh}}
\newcommand{\QCoh}{\mathrm{QCoh}}
\newcommand{\Coh}{\mathrm{Coh}}
\begin{document}

\author[Sibilla]{Nicol\`o Sibilla}
\address{Nicol\`o Sibilla, SISSA\\ 
Via Bonomea 265\\ 34136 Trieste TS\\
Italy}
\email{\href{mailto:nsibilla@sissa.it}{nsibilla@sissa.it}}

\author[Tomasini]{Paolo Tomasini}
\address{Paolo Tomasini, SISSA\\
Via Bonomea 265\\ 34136 Trieste TS\\
Italy}
\email{\href{mailto:ptomasin@sissa.it}{ptomasin@sissa.it}}

\title[Equivariant Elliptic Cohomology and Mapping Stacks]{Equivariant Elliptic Cohomology and Mapping Stacks}


\begin{abstract}
We introduce a new cohomology theory for stacks called elliptic Hochschild homology, prove some fundamental properties and compute it in some classes of examples. We then introduce its periodic cyclic version and show that, over the complex numbers and for a quotient stack,   this recovers Grojnowski's equivariant elliptic cohomology of the analytification.
\end{abstract}

\maketitle

\tableofcontents
\setcounter{tocdepth}{3}

\section{Introduction}
In this paper we give a new construction of equivariant elliptic cohomology via derived algebraic geometry. The use of techniques from derived algebraic geometry has become pervasive in  the study of elliptic cohomology, especially after the groundbreaking work of Lurie \cite{ELL1, ELL2, ELL3}.  However our aims in this paper are more limited and somewhat different in spirit from the developments originating from Lurie's work. For starters, we are only interested in rational phenomena, and therefore we will work over a fixed ground field  of characteristic zero.   Our goal is providing a geometric interpretation of the   equivariant elliptic cohomology of complex algebraic varieties equipped with the action of an algebraic group $G$.    
We will show that the $G$-equivariant elliptic cohomology of a variety $X$   can be described in terms of functions over a certain substack of the mapping stack 
 $$
\Map{E}{[X/G]}
$$ 
where $E$ is an elliptic curve over $k$, and $[X/G]$ is the stacky quotient of $X$ by   $G$. 
Our approach is closely related to earlier works by other authors including Gorbounov--Malikov--Schechtman--Vaintrob \cite{GMSV1} \cite{GMSV2} \cite{GMSV3} \cite{GMSV4}, Berwick-Evans \cite{BE1} and Berwick-Evans--Tripathy \cite{BETI}, Costello \cite{Cos1} \cite{Cos2}, Huan \cite{Huan18a}\cite{Huan18b}, Rezk \cite{Rez20} and Spong \cite{Spo}\cite{Spo19}\cite{Spo21}.

 We will introduce the  
 substack  of $
\Map{E}{[X/G]}
$ parametrizing \emph{quasi-constant maps}  
to $[X/G]$. We will show that functions over the stack of quasi-constant maps define a  cohomology theory of stacks, which we call \emph{elliptic Hochschild homology}.  We will study the formal properties of elliptic Hochschild homology, and compute it in important classes of examples.  
In the second to last section of the paper we will prove that  elliptic Hochschild homology recovers Grojnowski's rationalized  equivariant elliptic cohomology. The comparison  requires  an additional step, namely passing to the Tate fixed points under a natural action.  This  is familiar from the theory of classical Hochschild homology, which recovers periodized de Rham cohomology only    after passing to the Tate fixed points for the natural  $S^1$-action. The relationship between elliptic Hochschild homology and elliptic cohomology is entirely parallel to this.  

Our definition of equivariant elliptic Hochschild homology is valid  for a general group $G$. However some of our constructions will be geared towards the case when $G=T$ is a torus. We conclude the paper with a detailed discussion of the case when $G$ is a general (reductive) group. We propose a double-sided approach: one is inspired by Grojnowski's original ideas, depends on a choice of maximal torus $T$, and is developed in full detail in this paper. From this perspective, we obtain a full comparison between our theory and the equivariant elliptic cohomology of the analytification. We also propose a purely intrinsic approach, based on quasi-constant maps to $[X/G]$. 


\subsection*{\emph{Equivariant elliptic cohomology}}
Elliptic genera were first introduced by Ochanine in the 80's. Subsequently Witten   introduced what is now called the Witten genus, which is a kind of universal elliptic genus,   as the index of a Dirac operator on the loop space \cite{WitDir}. The work of Witten showed that elliptic genera had deep ties to quantum field theory \cite{WitQuant}, and spurred a great deal of research in the area. Elliptic cohomology was introduced in the late $80$-s to provide a conceptual framework for the study of elliptic genera. Elliptic cohomologies are even periodic cohomology theories whose associated formal group law is isomorphic to the completion of an elliptic curve at the identity. They have been the focus of great interest within  homotopy theory  for the last thirty years. Giving a satisfactory construction of elliptic cohomology, and its universal variant Tmf, is highly non-trivial. The state-of-the art is provided by ongoing work of Lurie \cite{ELL1}, \cite{ELL2}, \cite{ELL3}, which depends in a crucial way on the comprehensive foundations for  $\infty$-categories and spectral geometry which he has been developing in a series of books \cite{HTT}, \cite{HA}, \cite{SAG}, and by independent work of Gepner and Meier \cite{GM1} in the equivariant case.

It was understood early on by Ginzburg--Kapranov--Vasserot and Grojnowski that rationalized equivariant elliptic cohomology should give rise to coherent sheaves over the elliptic curve itself \cite{GKV} \cite{groj}. This fits into a well established  paradigm, first evinced in Atiyah-Segal's work on equivariant K-theory, that turning on equivariance is closely related to decompleting. In particular, as the formal group law of elliptic cohomology theories is the completion of an elliptic curve $E$ at the identity, the equivariant elliptic cohomology of a space $X$ with an $S^1$-action should take values in coherent sheaves over $E$. Further, the  stalks of this coherent sheaf can also be understood geometrically: they compute the Borel equivariant cohomology of various fixed points loci of $X$. Equivariance with respect to general Lie groups can be also understood in similar terms, and gives rise to coherent sheaves over the moduli space of $G$-bundles over $E$. In the influential article \cite{groj}, Grojnowski gives a beautiful construction of rationalized equivariant elliptic cohomology which implements this picture. Our work in this paper provides in particular  a  geometric explanation of Grojnowski's construction in terms of the defomation theory of (quasi-constant) maps out of  elliptic curves. 

\subsection*{\emph{Elliptic cocycles, double loops and secondary Hochschild homology}}
One of the  main challenges in elliptic cohomology is providing a geometric description of elliptic cocycles. Several influential proposals have been put forward starting from Segal's famous 1988 lecture at the Bourbaki seminar \cite{Segal88}, and subsequent work of   Stolz--Teichner \cite{EllObj},  \cite{STII}.  An implementation of this circle of ideas based on the concept of conformal nets has been pursued by Douglas, Bartels and Henriques in a series of works:  we refer the reader to \cite{douglas2011topological}  for an overview of this important perspective.

A different, but related, point of view is that elliptic cohomology  should be in some sense a categorification of K-theory, and hence be related to the K-theory of the loop spaces. That fits with the heuristics that raising the chromatic level should be related to categorification and looping. 
The point of view was pioneered by Witten \cite{WitQuant}\cite{WitDir} and Taubes \cite{Taub}, and has been applied by Devoto \cite{Dev96} in the case of $G$-equivariance with respect to a finite group $G$ (see also \cite{Mor09}, \cite{Gan09}, \cite{Gan13}, \cite{Dov19}). Interesting  recent developments include the work of Huan \cite{Huan18a} \cite{Huan18b}. From a different perspective, approaches using the cohomology of double loop spaces have been explored by Rezk \cite{Rez20} and Spong \cite{Spo}\cite{Spo19}\cite{Spo21}.

The idea that elliptic cohomology should be a categorification of K-theory suggests in particular that elliptic cocycles could be represented by appropriate categorified bundles. These ideas have been explored in \cite{baas2004two}. Within algebraic geometry, this perspective has been taken up by To\"en--Vezzosi,  who introduced   \emph{secondary Hochschild homology} as a model of elliptic cohomology \cite{TVChernIIFR}. We refer the reader to the introduction of \cite{TVChernENG} for a beautiful discussion of these ideas. In this paper we do not attempt to provide a geometric  interpretation of elliptic cocycles, although this is one of the broader goals of our project. However the definition of secondary Hochschild homology was an important motivation for our work, and thus it is useful  to review it here  and compare it with our construction.

Recall that the Hochschild homology of a scheme $X$ is given by the global sections of the structure sheaf of the derived loop space of $X$,
$$
\mathrm{HH}_*(X) \simeq \cO(\cL X)
$$
Here the derived loop space $\cL X = \mathrm{Map}(S^1, X)$ is the stack of maps from $S^1$ to $X$. Secondary Hochschild homology is defined as the global sections of the structure sheaf of the \emph{double loop space} of $X$
\begin{equation}
\label{secondary}
\mathrm{HH}^{(2)}_*(X) = \cO(\cL\cL  X)
\end{equation}
As a model of elliptic cohomology, $\mathrm{HH}^{(2)}_*(X)$ has several desirable features. First, the double loop space 
$$
\cL\cL X = \mathrm{Map}(S^1 \times S^1, X)
$$
is the moduli space of maps out of a topological torus, which captures the underlying topology of an elliptic curve. Additionally  categorified bundles  yield cocycles in $
\mathrm{HH}^{(2)}_*(X)$, as the  heuristics on elliptic cohomology  would dictate. We refer the reader to \cite{HSS, HSSS} for additional information on secondary Hochschild homology and its properties. 

On the other hand secondary Hochschild homology is insensitive to the complex moduli of elliptic curves, and therefore cannot be hoped to fully capture elliptic cohomology. Our construction can be described as a variant of  (\ref{secondary}), where we promote  the topological torus $S^1 \times S^1$ to an elliptic curve $E$ over a field $k$. When the ground field $k$ is the field of complex numbers $\mathbb{C}$, and $X$ is a complex scheme with an action of an algebraic group $G$, the resulting theory is closely related to the complexified (equivariant) elliptic cohomology of the analytification of $X$; and recovers it after passing to Tate fixed points for an appropriate action. One key difference with (\ref{secondary})  is that the full mapping stack 
$$
\Map{E}{X}
$$
is too large, in general, as there might be topologically non-trivial maps between $E$ and $X$. From the perspective of elliptic cohomology, the only maps that contribute are the \emph{quasi-constant} ones. As we explain next, this concept can be easily formalized.

\subsection*{\emph{Quasi-constant maps}}
We fix a ground field $k$ of characteristic zero, and an elliptic curve $E$ over $k$. For ease of exposition,  for most of this introduction we will restrict attention to the case when $G=T$ is 
a torus.

Let  $X$ be a variety equipped with a $T$-action.    
We denote by 
\begin{equation}
\label{almostconstant}
\Mapo{E}{[X/T]} \subset \Map{E}{[X/T]}
\end{equation}
the smallest clopen substack containing the trivial maps, i.e. the maps factoring as 
$$
E \to \Spec(k) \to [X/T]
$$ 
We call $
\Mapo{E}{[X/T]}$ the stack of \emph{quasi-constant maps}. 
In fact, the correct notion of quasi-constant maps is slightly more involved; 
 we refer the reader to Section \ref{qcm} for a complete exposition of this point.


 Working with maps that are close to being constant is familiar from many geometric contexts.  The product structure on Chen--Ruan orbifold cohomology, for instance, is governed by quasi-constant maps out of  marked rational curves. 
 For an example which is closely related to our story recall that, 
in defining the Witten genus via Dirac operator on the loop space, Witten and Taubes actually work with \emph{small loops}: i.e. with the normal bundle to constant loops, rather than with the full loop space. Quasi-constant maps out of $E$ can be viewed as an analogue, in our setting, of Witten and Taubes' small loops.

A key property of the stack of quasi-constant maps is that it satisfies a form of Zariski codescent on the target.

\begin{customthm}{A}[Theorem \ref{proposition:codescentnormalvar}]
\label{introcodescent}
Let $U_i \to X$ be a $T$-equivariant Zariski open cover. Then the natural map
$$
\varinjlim_i \Mapo{E}{[U_i/T]} \to 
\Mapo{E}{[X/T]}
$$
is an equivalence.
\end{customthm}
Theorem \ref{introcodescent} plays a crucial role in our construction. Codescent fails in general for the full mapping stack, as topologically non-trivial maps will not factor through any equivariant Zariski open cover of $X$. The fact that codescent holds for quasi-constant maps should be viewed as a counterpart of the Mayer--Vietoris principle in elliptic cohomology. It is an interesting question to what extent codescent is a general feature of  stacks of quasi-constant maps from an arbitrary source;  we refer the reader to Remark \ref{sec:general} in the main text for additional comments on this point. 

\subsection*{\emph{Elliptic Hochschild homology}}

The stack of quasi-constant maps carries a structure morphism  
$$
\Mapo{E}{[X/T]} \to \Mapo{E}{[pt/T]} \simeq \Pic^0(E) \otimes_\mathbb{Z} \check{T}
$$
where $\check{T}$ is the cocharacter lattice of $T$. The stack $\Pic^0(E)$ is the connected component of the Picard stack of $E$ which parametrizes degree $0$ line bundles. We can  rewrite this as 
$$
\Pic^0(E)\otimes_\mathbb{Z} \check{T}  \simeq \big ( \mathrm{Pic}^0(E)\otimes_\mathbb{Z} \check{T}\big ) \times [pt/T] 
 \simeq \big ( E \otimes_\mathbb{Z} \check{T} \big )\times [pt/T]  \simeq E^n \times [pt/T] 
$$
where $\mathrm{Pic}^0(E)$ is the Picard scheme of $E$ and $n$ is the rank of $T$. In particular, we have a natural map
$$
p: \Mapo{E}{[X/T]} \longrightarrow \mathrm{Pic}^0(E)  \otimes_\mathbb{Z} \check{T}
$$
We set $E_T:=\mathrm{Pic}^0(E)  \otimes_\mathbb{Z} \check{T}$.

The following is the most important definition of this article.
\begin{customdef}{B}[Definition \ref{elliptichoch}]
\label{mainintro}
The $T$-equivariant \emph{elliptic Hochschild homology}  of $X$ is 
$$
\mathcal{HH}_E([X/T]):=p_*(\mathcal{O}_{\Mapo{E}{[X/T]}}) \in \QCoh(E_T
)
$$
We denote by $\mathrm{HH}_E([X/T])$ the global sections of $\mathcal{HH}_E([X/T])$. 
\end{customdef}
Note that, as a consequence of Theorem \ref{introcodescent}, the sheaf $\mathcal{HH}_E([X/T])$ satisfies $T$-equivariant Zariski descent on $X$. That is, if $\cU=\{U_i\}$ is a $T$-equivariant Zariski open cover of $X$
$$
\mathcal{HH}_E([X/T]) \simeq \varprojlim_i \mathcal{HH}_E([U_i/T])
$$

Let us briefly discuss the case of a general reductive algebraic group $G$. In that setting, 
defining the relevant stack of maps out of $E$ takes a little more care, as we need to take into account also a \emph{semi-stability} condition. This is however easily done, see Section \ref{sect:ReductiveG} for a discussion. The stack of semi-stable quasi-constants maps is an open substack of $\mathrm{Map}^{0}(E,[X/G])$
$$
\mathrm{Map}^{0,s.s.}(E,[X/G])\hookrightarrow\mathrm{Map}^{0}(E,[X/G])
$$
and is equipped with a map $p$ to $E_G$, the coarse moduli space of degree $0$, semi-stable $G$-bundles on $E$. 
Then one defines as above 
$$
\mathcal{HH}_E([X/G]):=p_*(\mathcal{O}_{\Mapo{E}{[X/G]}}) \in \QCoh(E_G)
)
$$
As we explain in Section \ref{sect:ReductiveG}, it is also possible to give a different definition of $G$-equivariant elliptic Hochschild homology based on a reduction to a maximal torus. We refer the reader to  Section \ref{sect:ReductiveG}  for a more detailed discussion.


As we show, this construction satisfies all functorialities expected from an equivariant cohomology theory: it is contravariant with respect to morphisms of $G$-varieties $X\to Y$, and has contravariant change of group maps with respect to group homomorphisms $H\to G$.

\begin{remark}
The terminology \emph{elliptic Hochschild homology} was already used by Moulinos-Robalo-To\"en in their beautiful  paper \cite{MRT} to refer to a seemingly different construction. There are differences between our setting  and theirs. They work over a p-adic ring of integers $R$; additionally they do not consider the equivariant setting, which is of primary importance for us. 

However the two notions are intimately related, and in fact equivalent when they overlap. 
 In this article we place ourselves over a field $k$ of characteristic $0$ because we are interested in establishing properties of $\mathcal{HH}_E([X/T])$ which only hold in that setting; and ultimately we want to set $k=\mathbb{C}$ and compare our theory with complexified equivariant elliptic cohomology of the analytification of $X$.  Note however that our Definition \ref{mainintro} does not depend on the choice of ground ring, and therefore makes  sense also over a ring of p-adic integers.  We claim  that if $X$ is a derived scheme over $R$ as considered in \cite{MRT}, and $T$ is the trivial group, then Definition \ref{mainintro} is equivalent to the elliptic Hochschild homology of $X$ as defined in \cite{MRT}. This comparison result  appears in the companion paper \cite{SST}. This justifies our usage of the term \emph{elliptic Hochschild homology}, as it is compatible with its earlier definition in  \cite{MRT}.
\end{remark}

In this article we  establish several  fundamental formal properties of elliptic Hochschild homology.  
Some of our main results   are a \emph{localization theorem} for elliptic Hochschild homology, and a calculation of its \emph{analytic stalks}. These results, which we will explain in the next section of this introduction, ought to be considered as direct analogues of the local structure of rationalized equivariant elliptic cohomology that was first described by Grojnowski in \cite{groj}. A more recent reference, which is more closely related to our work from a methodological standpoint, is the description of the local structure of Hochschild homology of global quotient stacks in \cite{HChen}. 
Our results give a complete description of the local behaviour of elliptic Hochschild homology of quotient stacks, and  are key  to establish the comparison with Grojnowski's rationalized equivariant elliptic cohomology.

Before presenting these results however,   let us explain  two classes of examples for which $\mathcal{HH}_E(X)$ can be explicitly computed. 
 The first  observation is that, as expected, in the absence of a group action elliptic Hochschild homology coincides with ordinary Hochschild homology. This is an analogue of the fact that, in the non-equivariant regime, rationalization collapses all cohomology theories to singular cohomology.   
 
\begin{customthm}{C}
[Corollary \ref{corollary:LoopstackequalsMapo}]
\label{introtrivialaction}
Let $T$ be the trivial group. Then there is an equivalence 
$$
\mathrm{HH}_E(X) \simeq \mathrm{HH}_*(X)
$$
\end{customthm}

Next, let us consider the case  when $X$ is a smooth toric variety equipped with the action of the maximal torus. Toric actions on toric varieties are treated at length in Section \ref{section:mainthmproof}, for several reasons. First, calculations on affine spaces and projective spaces equipped with a torus actions are the cornerstone of our general structure results, as varieties equipped with a $T$-action admit  an equivariant local embedding in affine space. Second, toric varieties provide fully computable examples of our theory owing to the codescent Theorem \ref{introcodescent}. In particular, when $X$ is a smooth toric variety equipped with the action of a maximal torus $T$ we have the following result.    

Assume that $k=\mathbb{C}$ and let $X$ be a smooth toric variety equipped with the action of the maximal torus $T$. We denote by $\mathcal{E} ll^0_{T^{an}}(X^{an})$ the degree zero complexified $T$-equivariant elliptic cohomology of the analytification of $X$, viewed as a coherent sheaf over $E_T$  
$$
\mathcal{E} ll^0_{T^{an}}(X^{an}) \in \mathrm{Coh}(E_T)
$$
\begin{customthm}{D}
[Theorem \ref{theorem:ellipticohoftoricvar}]
\label{toricintro}
There is an equivalence in $\mathrm{Coh}(E_T)$
$$
 \mathcal{HH}_E([X/T])  \simeq \mathcal{E} ll^0_{T^{an}}(X^{an})
$$
\end{customthm}
Theorem \ref{toricintro} follows from two ingredients. The first is the calculation of the elliptic Hochschild homology of the affine space $\mathbb{A}^N$ under an arbitrary torus action, which  plays a key role in the proof of  Theorems \ref{introloc} and \ref{introcompletion}. The second is   codescent  for quasi-constant maps, Theorem \ref{introcodescent}. 


\subsection*{\emph{Main theorems}} Our main results  are contained in the last two Sections of the article, Section \ref{section:HELL} and \ref{section:Tate}. 
In Section  \ref{section:HELL}  we establish two structure theorems that describe the local behaviour of elliptic Hochschild  homology. They are exactly parallel to features of ordinary equivariant elliptic cohomology. As explained by Grojnowski, the local structure of elliptic cohomology is governed by information coming from the cohomology of fixed point loci. Further, on sufficiently small   neighbourhoods of points of $E_T$, elliptic cohomology is equivalent to  ordinary Borel equivariant cohomology of fixed point loci. In our setting, these two claims translate into the statements of Theorem \ref{introloc} and Theorem \ref{introcompletion} respectively.   Analogous statements for ordinary Hochschild homology were proved by Chen in \cite{HChen}, which was an important inspiration for our work.

As first suggested by Grojnowski, and further clarified  by work of Ro\c{s}u, we can associate to each closed point $x$ of $E_T$ a subgroup $T(x)$ of $T$.  When $T$ is of rank one, $T(x)$ is  equal to $T$ if $x$ is non-torsion and is equal to 
$\mu_n$  
if $x$ is torsion of (exact) order $n$. We denote $X^{T(x)}$ the derived fixed locus of $X$ under the induced $T(x)$-action. The classical fixed locus is given by the truncation $t_0(X^{T(x)})$.  

\begin{customthm}{E}
[Theorem \ref{proposition:localization}]
\label{introloc}
Let $X$ be a  smooth variety over $k$. Then for any closed point $x \in E_T$ there exists a Zariski open neighborhood $U$ of $x$ such that the natural map
\begin{equation}\label{eq:locmap}
\Mapo{E}{[t_0X^{T(x)}/T]}\times_{E_T} U\rightarrow\Mapo{E}{[X/T]}\times_{E_T} U
\end{equation}
induced by the inclusion $t_0 X^{T(x)}\rightarrow X$, is an equivalence.
\end{customthm}

Recall that $T\cong \Spec \mathrm{HH}_\ast([\ast/T])$. If $Y$ is a stack with a $T$-action, the Hochschild homology $
\mathrm{HH}_\ast([Y/T])$ carries an action of 
$\mathrm{HH}_\ast([\ast/T])$. 

\begin{customthm}{F}[Theorem \ref{theorem:completionsgeneral}]
\label{introcompletion}
Let $X$ be a smooth quasi-projective variety. The \'etale stalk of $\mathcal{HH}_E([X/T])$ at a point $x$ of $E_T$  is equivalent to the  completion of $\mathrm{HH}_\ast([t_0 X^{T(x)}/T])$ at  $$1\in T\cong \Spec \mathrm{HH}_\ast([\ast/T]).$$
\end{customthm}
 
Theorem \ref{introloc} and \ref{introcompletion} are key ingredients in the proof of the comparison between $\mathcal{HH}_E([X/T])$ and  equivariant elliptic cohomology. As we already discussed, the first step is   to introduce a \emph{periodic cyclic} variant of  $\mathcal{HH}_E([X/T])$, which we call  \emph{elliptic periodic cyclic homology} and denote  $\mathcal{HP}_E([X/T])$. Elliptic Hochschild homology, just as classical Hochschild homology, is  not homotopy invariant. Thus it cannot be hoped to coincide with elliptic cohomology on the nose. The fact that this discrepancy can be obviated by keeping track of an extra piece of data, in the form of a differential or of a cyclic action, is familiar from the classical story. However what exactly the  cyclic action might be in the elliptic setting is a somewhat  subtle issue. 

In the ordinary setting periodic cyclic homology is calculated by passing to the Tate fixed points under  the natural action of $S^1$ on Hochschild homology.  Via the identification 
 $$
\mathrm{HH}_*(X) \simeq \cO(\cL X)
$$
this action comes from the action of $S^1$ on $\cL X$ via  loop rotation. 
However elliptic Hochschild homology $\mathcal{HH}_E([X/T])$ does not carry in a natural way an  action of $S^1$.  It is defined instead in terms of functions on a  stack parametrizing maps out of the elliptic curve $E$
$$
\mathcal{HH}_E([X/T]):=p_*(\mathcal{O}_{\Mapo{E}{[X/T]}}) \in \QCoh(E_T
). 
$$
Thus it is endowed with a natural $E$-action. It is not difficult to see however  that this action is non-trivial only along the fibers of the structure map 
$$
\Mapo{E}{[X/T]} \to E_T
$$
This simple observation allows us to reinterpret the $E$-action as a hidden $S^1$-action, \emph{relative} to the base scheme $E_T$. The possibility to interpolate between a (cohomological) $E$-action and an action of the circle depends in a crucial way on the characteristic 0 assumption. Indeed it leverages the formality of the coherent cohomology of $E$, which is thus   equivalent to the coalgebra of cochains on $S^1$ (since the latter is also formal). We stress that both of these   formality statements fail   away from characteristic zero.  These identifications allow us to make sense of the Tate fixed points of elliptic Hochschild homology and we set  $$
\cHP_E([X/T]):=\mathcal{HH}_E([X/T])^{\mathrm{Tate}}$$ 
If $X$ is a variety with a $T$-action, its elliptic periodic cyclic homology becomes an object in the Tate fixed points of the trivial $S^1$-action on $\Perf(E_T)$. The latter coincides with the  \emph{$\mathbb{Z}_2$-folding} of $\Perf(E_T)$, i.e. the category obtained by collapsing the natural   $\mathbb{Z}$-grading on $\Perf(E_T)$ to a $\mathbb{Z}_2$-grading
$$
\cHP_E([X/T]) \in \Perf(E_T)^{\mathrm{Tate}} \simeq \Perf(E_T) \otimes_k k[u, u^{-1}]
$$
where $u$ is in degree $2$. 

\begin{customthm}{G}[Theorem  \ref{theorem:comparisongeneral}]\label{comparisonintro}
Let $k=\bC$. Let $T$ be an algebraic torus of rank $n$ acting on a smooth quasi-projective variety $X$.
We have an isomorphism of $\bZ_2$-periodic perfect complexes on $E$
$$\cHP_E([X/T])\simeq\El_{T^{\an}}(X^{\an})$$
where $\El_{T^{\an}}(X^{\an})$  is  the complexified $T^{\an}$-equivariant elliptic cohomology of the analytification of $X$.  \end{customthm}

An analogue of Theorem \ref{comparisonintro} for classical Hochschild homology is due to Halpern-Leistner and Pomerleano. Their Theorem 2.17 in \cite{HLPom} shows that the periodic cyclic homology of suitable quotient stacks $[X/G]$ over the complex numbers is equivalent to the $G^\an$-equivariant K-theory of the analytification $X^\an$ of $X$.

Let us comment briefly on the proof of Theorem \ref{comparisonintro}. It follows from   Theorem \ref{introcompletion} that the completions of $\cHP_E([X/T])$ and $\El_{T^{\an}}(X^{\an})$ at points of $E_T$ match. The formal completions at points of a smooth variety $X$ can be thought of as algebraic analogues of small complex balls covering $X$. This might suggest that this information is sufficient to establish a global equivalence between $\cHP_E([X/T])$ and $\El_{T^{\an}}(X^{\an})$. 
This is roughly correct, but  
the notion of completion has to be substantially enhanced. Completions at points  are by themselves insufficient. The correct notion is provided by the \emph{adeles} of a coherent sheaf, which are kinds of completed stalks labelled by flags of subvarieties. The adeles come with natural ``restriction maps'' which relate them, and give the adeles of a sheaf the structure of a cosimplicial complex. We review the theory of adelic descent in Section \ref{sec:adelic}. The theory of adelic descent goes back to  classic results of Weil  for curves, while its generalization to higher dimensions is due to Parshin and Beilinson. We will mostly follow the modern formulation of  Groechening \cite{Groech}, which is particularly convenient for our purposes.

Adelic methods alone are not quite sufficient to prove Theorem \ref{comparisonintro}. The reason is that, in general, the adeles of $\cHP_E([X/T])$ and $\El_{T^{\an}}(X^{\an})$ are hard to compute owing to the fact that $E_T$ is not affine. 
We work around this issue, by combining adelic descent with an induction on the rank of the torus $T$ acting on $X$. This uses in a crucial way our second structure result on the local behaviour of elliptic Hochschild homology, i.e.  Theorem \ref{introloc}.  

\begin{remark}
It is a natural question, see Ro\c{s}u \cite{Ros3}, whether  equivariant cohomology theories can be defined via adelic methods. Grojnowski's original approach involves a careful choice of analytic open cover, and this limits its applicability to the complex setting. A fully adelic treatment would have several benefits, and in particular would work over an arbitrary characteristic zero base. This is the subject of a recent paper of the second author \cite{EllAd}. 
\end{remark}

In the last section of the paper we  focus on the general case of a reductive group  $G$. We give two definitions of $G$-equivariant elliptic Hochschild homology, and its periodic cyclic version. The first definition closely mimics Grojnowski's original approach and  involves the choice of a maximal torus $T$ of $G$. Let $W$ be the Weyl group. We set
$$\cHH_E([X/G]):=\cHH_E([X/T])^W \quad \cHP_E([X/G]):=\cHP_E([X/T])^W$$
Using this definition we can prove the following comparison theorem.
\begin{customthm}{H}[Corollary  \ref{cor:comparisonG}]\label{groupGintro}
Let $k=\bC$. Let $G$ be a reductive group acting on a smooth quasi-projective variety $X$.
We have an isomorphism of $\bZ_2$-periodic perfect complexes on $E$
$$\cHP_E([X/G])\simeq\El_{G^{\an}}(X^{\an})$$
where $\El_{G^{\an}}(X^{\an})$ is  the complexified $G^{\an}$-equivariant elliptic cohomology of the analytification of $X$.  \end{customthm}
We also propose a second fully intrinsic definition of 
$G$-equivariant elliptic Hochshchild homology. This has the advantage not depending on the choice of a maximal torus (so, in particular, this definition makes sense for any algebraic group and not just for reductive ones). In this approach, $G$-equivariant elliptic Hochshchild homology is defined in terms of functions on the \emph{semi-stable} locus of the stack of quasi-constant maps from $E$ to $[X/G]$. We remark that this definition trivially agrees with the previous one in the case when $G=T$ is a torus. We conclude the paper with a sketch of the  proof of a comparison between these two definitions of  $G$-equivariant equivariant Hochschild homology. 

%
%

{\bf Acknowledgements:} 
This project started from an observation of Sarah Scherotzke, which appears in this article as Theorem \ref{introtrivialaction}. We thank Sarah for  generously sharing her ideas and for her encouragement. In the early stages of  this project we had a very enlightening email exchange with Jacob Lurie,  that made clear that he was already aware of the results that we present in this paper. Pavel Safronov offered key insights at various points during our work. We benefitted from conversations with many members of the community.  We thank Dragos Fratila, David Gepner, Sam Gunningham, Lennart Meier, Hayato Morimura, Tasos Moulinos, James Pascaleff, Emanuele Pavia, Mauro Porta,  Vesna Stojanoska, Mattia Talpo and Bertrand To\"en for useful exchanges and for their encouragement. The first author wants to thank  David Ben-Zvi for very  inspiring conversations during a visit to Austin in 2019, which sparked his interest in elliptic cohomology.   A substantial part of this project was carried out while the second author was in residence at the Hausdorff Research Institute for Mathematics, in Bonn, as a participant in the Program ``Spectral Methods in Algebra, Geometry, and Topology.'' We  thank the HIM for excellent working conditions.  

\section{Preliminaries}
\label{section:preliminaries}
Throughout the paper, $k$ is a fixed field of characteristic zero. We use the term \emph{variety} to mean a $k$-scheme which is integral, separated and of finite type. Unless we explicitly state otherwise, all geometric objects in the paper are implicitly assumed to be defined over $k$.  

In this paper we use the language of $\infty$-categories and derived algebraic geometry as developed by Jacob Lurie in \cite{HTT} and \cite{SAG}. We will mostly work over a field of characteristic zero, where derived rings can be modelled equivalently by simplicial commutative algebras or by commutative differential-graded algebras. In the setting of commutative cdga-s, foundations for derived algebraic geometry were developed by To\"en and Vezzosi in \cite{HAGI} and \cite{HAGII}.
\subsection{Derived stacks}
 Let $\CAlg$ be the $\infty$-category of simplicial commutative rings, and $$\dAff:=\CAlg^\op$$ be the $\infty$-category of derived affine schemes. We also use the name \emph{derived rings} for simplicial commutative rings. Constant simplicial commutative rings embed fully faithfully in simplicial commutative rings, and the embedding has a left adjoint corresponding to the  connected components functor, which is denoted $\pi_0$. We will refer to constant simplicial commutative rings as \emph{underived, classical}, or \emph{discrete} rings interchangeably.

The $\infty$-category of derived prestacks is the $\infty$-category of functors $$\cP(\dAff):=\Fun(\dAff^\op,\cS)$$ from simplicial commutative rings to the $\infty$-category of spaces. The $\infty$-category of derived stacks is   the $\infty$-category of (hypercomplete) sheaves on $\dAff$ with respect to the étale topology, $\dSt$. The category of derived stacks is naturally an $\infty$-topos. We denote by $\mathrm{Spec}$ the Yoneda embedding 
$$\mathrm{Spec}: 
\CAlg^\op\rightarrow\cP(\CAlg^\op).$$

Analogously, let $\cP(\mathrm{Aff}):=\Fun(\mathrm{CRing},\cS)$ be $\infty$-category of presheaves over classical commutative rings. The $\infty$-category of hypercomplete sheaves on $\mathrm{CRing}$ with respect to the \'etale topology is  the category of \emph{higher stacks}, $\St$. These embed fully faithfully in derived stacks; the embedding has a right adjoint called the \emph{truncation} functor and denoted by $t_0$. We refer to derived stacks which are equivalent to their truncation as \emph{underived, classical} or \emph{discrete}.

In a relative setting, given a simplicial commutative ring $R$, we define $\CAlg_R$ to be the $\infty$-category of commutative simplicial $R$-algebras.  We denote the $\infty$-category of derived prestacks over $R$, $\cP(\CAlg_R^\op)$, and the $\infty$-category of stacks over $R$, $\dSt_R$. If $R=k$ is a field of characteristic zero, the $\infty$-category $\CAlg_k$ is equivalent to the $\infty$-category $\mathrm{dg-cAlg}_{k}^{\le 0}$ of connective commutative dg algebras, i.e. concentrated in nonpositive degrees in cohomological indexing convention. In this paper we mostly work in this setting.

\subsection{Effective epimorphisms, geometricity, connected components}
\emph{Effective epimorphisms} are the natural notion of surjective maps in an  $\infty$-topos. Effective epimorphisms can be characterized as follows

\begin{definition}[\cite{HTT}, Corollary 6.2.3.5]
A morphism $f:X\rightarrow Y$ in an $\infty$-topos is an \emph{effective epimorphism} if one of the following two equivalent conditions is satisfied:
	\begin{enumerate}
		\item $f$ is a $(-1)$-truncated object in the $\infty$-topos $\dSt_{/Y}$ of derived stacks over $Y$ 
		\item The $\check{C}$ech nerve $\check{C}(f)$ is a simplicial resolution of $Y$.
	\end{enumerate}
\end{definition}
This definition, in the special case of the $\infty$-topos of derived stacks, becomes equivalent to the following property:
\begin{property}
A map of derived stacks $f:X\rightarrow Y$ is an effective epimorphism if for any representable $\Spec S$ and any map $\Spec S\rightarrow Y$, there exists an etale cover $\{S_i\}$ of $S$ such that for all $i$ the composition $\Spec S_i\rightarrow\Spec S\rightarrow Y$ admits a lift $\Spec S_i\rightarrow X$.
\end{property}
In other words, an effective epimorphism of derived stacks is an étale locally surjective map, just as in the classical theory of sheaves a surjective map of sheaves is a locally surjective map.

There is an important class of derived stacks called \emph{geometric} (derived) stacks. This notion is   a generalization to the setting of derived algebraic geometry of the more classical notion of Artin stack. Geometric stacks are characterized by an integer number called the \emph{geometric level}. The definition is stated inductively on the level. We will state the definition in the context of smooth maps, for a discussion in full generality see for example \cite{PORTA2016351}.

\begin{definition}
Let $X$ be a derived stack.
\begin{itemize}
\item $X$ is $(-1)$-geometric if it is an affine derived scheme. A map $f:X\rightarrow Y$ of derived stacks is $(-1)$-representable if for every map $$\Spec A\rightarrow Y$$ from a $(-1)$-geometric stack $\Spec A$, the fiber product $\Spec A\times_Y X$ is $(-1)$-geometric. A map is $(-1)$-smooth if it is $(-1)$-representable, and the induced map $$\Spec A\times_Y X\rightarrow\Spec A$$ is a smooth map of affine derived schemes.

\item 
$X$ is $n$-geometric if the diagonal map $X\rightarrow X\times X$ is $(n-1)$-representable and there exists an effective epimorphism $\coprod\Spec A_i\rightarrow X$, called an $n$-atlas for $X$, such that each map $\Spec A_i\rightarrow X$ is $(n-1)$-smooth. We say that $X$ is geometric if it is $n$-geometric for some $n$.
\item A map of derived stacks $X\rightarrow Y$ is $n$-representable if for every $(-1)$-geometric $\Spec A$ and any map $\Spec A\rightarrow Y$, the fiber product $\Spec A\times_Y X$ is $n$-geometric. 
\item A map $X\rightarrow Y$ is $n$-smooth if it is $n$-representable, and for any $\Spec A\rightarrow Y$ there exists a $n$-atlas $\coprod\Spec B_i$ of $\Spec A\times_Y X$ such that for all $i$ the composition $$\Spec B_i\rightarrow \Spec A\times_Y X\rightarrow \Spec A$$ is smooth.
\end{itemize}
\end{definition}

There is a notion of open and closed immersion of geometric stacks. 
\begin{definition}
Let $f:X\rightarrow Y$ be a morphism of derived geometric stacks. We say that  $f$ is an \emph{open} (resp. \emph{closed}) \emph{immersion} if for any map $g:\Spec S\rightarrow Y$ the fiber product $\Spec S\times_Y X$ is a derived scheme, and the induced map $\Spec S\times_Y X\rightarrow\Spec S$ is an open (resp. closed) immersion of derived schemes.
\end{definition}
In the context of derived schemes, the above definition is equivalent to the definitions below.
\begin{definition}[\cite{PanVez}, Definition 4.2]
 A map of derived schemes $f:X\rightarrow\Spec S$ with affine target is an \emph{open immersion} if there exist affine derived schemes $\Spec A_i\rightarrow X$ over $X$ such that the composite map $\coprod_i\Spec A_i\rightarrow X\rightarrow\Spec S$ is an effective epimorphism, and each composite $\Spec A_i\rightarrow X\rightarrow\Spec S$ exhibits $A_i$ as a localization of $S$.

Let $f:X\rightarrow Y$ be a morphism of derived schemes. $f$ is an \emph{open immersion} if for any affine derived scheme $\Spec S$, the induced map $f_S:\Spec S\times_Y X\rightarrow \Spec S$ is an open immersion of derived schemes with affine target. 
\end{definition}

\begin{definition}
Let $f:X\rightarrow Y$ be a morphism of derived schemes. We say that $f$ is a \emph{closed immersion} if the map on the underlying classical schemes $t_0 f: t_0 X\rightarrow t_0 Y$ is a closed immersion of classical schemes.
\end{definition}

We will also need the notion of connected component of a point in a derived geometric stack $X$. Let $K$ be a field, and let $x:\Spec K\rightarrow X$ be a $K$-point. Consider the full subcategory of $\dSt_{/X}$ of open and closed maps to $X$ whose image contains the $K$-point $x$:
$$
\mathrm{Clopen}_{x}{X} =\{  a:G\rightarrow X\mbox{ clopen map in }\dSt  \mbox{ such that }x\mbox{ factors through }a\}
$$
\begin{definition}
The connected component of the point $x$ in $X$, $X^{(x)}$, is an initial object in the $\infty$-category $\mathrm{Clopen}_{x}{X}$ (which always exists). 
\end{definition}

It will be important to consider quasi-coherent sheaves on derived (pre)stacks. 
\begin{definition} Let $X$ be a prestack. 
The \emph{$\infty$-category of quasi-coherent sheaves} on $X$, $\QCoh(X)$, is defined as the limit 
$$
\QCoh(X):=\mathrm{lim}_{\{\Spec A\rightarrow X\}}\Mod_A
$$
over the $\infty$-category of derived affine schemes with a map to $X$. Here $\Mod_{A}$ denotes the $\infty$-category of  $A$-modules.
\end{definition}
\subsubsection{$\bZ_2$-folding of quasi-coherent sheaves}
We will be interested in dealing with a $\bZ_2$-periodic version of the $\infty$-category of quasi-coherent sheaves. We review this object following Preygel \cite{PreyTate}. There, he introduces a Tate construction on $\infty$-categories with an action of $S^1$ and this formalism recovers in particular $\mathbb{Z}_2$-folding, which is what we are interested in.  
\begin{definition}[Definition 1.2.3 in \cite{PreyTate}]
Let $\mathscr{C}$ be a small stable $k$-linear idempotent complete $\infty$-category with an action of $S^1$, where $k$ is a field of characteristic zero. Then the Tate construction of $\scrC$ with respect to this $S^1$-action is the tensor product of small stable $\infty$-categories
$$\scrC^{\mathrm{Tate}}:=\scrC^{S^1}\otimes_{\Perf(k[[u]])} \Perf(k((u)))$$
where $u$ is a variable of homological degree $-2$. 
\end{definition}
In the following, we adopt Preygel's notation and omit $\Perf$, i.e. we shall write simply
$$\scrC^{\mathrm{Tate}}:=\scrC^{S^1}\otimes_{k[[u]]}k((u))$$

If $\scrC$ is not small but is equipped with a coherent t-structure (see Definition 4.2.7 in \cite{PreyTate}), Preygel defines its Tate fixed points via a \emph{regularization} procedure. The regularization of $\scrC$, $\scrR(\scrC$), is defined as the ind-completion of the full subcategory of \emph{coherent} objects, i.e. bounded above objects $C$ whose $r$-truncation, $\tau_{\leq r}C$, is compact for all $r$ (see Definition 4.2.2 in \cite{PreyTate}). Then Preygel defines (see Definition 1.3.4 in \cite{PreyTate})
$$\scrC^{t\mathrm{Tate}}:=\scrR(\scrC^{S^1})\otimes_{k[[u]]}k((u))$$
Following Preygel, we refer to $k((u))$-linear $\infty$-categories as $\bZ_2$-periodic. 

If $X$ is a Noetherian geometric stack, the standard t-structure on $\QCoh(X)$ is coherent (see Proposition 4.4.1 in \cite{PreyTate}). In particular, we can consider the Tate construction with respect to a trivial $S^1$-action on $\QCoh(X)$.
\begin{definition}
The \emph{$\bZ_2$-folding} of $\Perf(X)$ is the category 
$$
\Perf(X)_{\mathbb{Z}_2} := \Perf(X)^{\mathrm{Tate}}
$$
The \emph{$\bZ_2$-folding} of $\QCoh(X)$ is the category $$\QCoh(X)_{\mathbb{Z}_2}:=\QCoh(X)^{t\mathrm{Tate}}$$
\end{definition}
\subsection{Betti stacks and affinization}\label{subsection:affinization}
We define an important class of derived stacks, called \emph{Betti stacks}, which correspond to spaces. 
\begin{definition}
Let $X\in\cS$ be a space.
The \emph{Betti stack} associated to the space $X$ is the sheafification of the constant presheaf $\underline{X}:\dAff^\op\rightarrow\cS$ sending any derived affine $\Spec A$ to $X$. We abuse notation by denoting the Betti stack associated to a space $X$ again by $X$. 
\end{definition}

The \emph{affinization} of a derived stack is a fundamental notion, and is strictly related to the concept of \emph{affine stack} which we review below. The construction has been introduced in the nonderived setting by To\"en in \cite{ToenPhD}, and studied in the derived context by Ben-Zvi and Nadler. A review is in \cite{MRT}. We recall the construction here for the reader's convenience.

Let $k$ be a discrete commutative ring. We denote by $\coCAlg_k$ the $\infty$-category of cosimplicial commutative algebras over $k$, and $\mathrm{St}_k\subset\dSt_k$ the $\infty$-category of classical higher stacks.
\begin{definition}
Denote by 
\begin{equation*}
\Spec^{\Delta}:\coCAlg\rightarrow\mathrm{St}_k
\end{equation*}
the functor that sends a coconnective commutative $k$-algebra $A$ to the functor it corepresents, i.e. the functor sending a simplicial commutative ring $B$ to the space of maps $\mathrm{Map}_{\coCAlg}(A,B)$. Stacks of the form $\Spec^{\Delta}A$ for some cosimplicial commutative $k$-algebra $A$ will be called \emph{affine stacks}.\footnote{The same notion is referred to as \emph{coaffine stacks} by Lurie in \cite{DAGVIII}.}
\end{definition}
The functor $\Spec^{\Delta}$ has a left adjoint $\cO:\mathrm{St}_k\rightarrow\coCAlg$. The composition $$\Spec^{\Delta}\cO:\mathrm{St}_k\rightarrow\mathrm{St}_k$$ is called the \emph{affinization} functor.

We will consider the affinization of an elliptic curve $E$ over a field $k$ of characteristic zero, i.e. the affine stack $\Spec^{\Delta}\cO(E)$. Over $k$ the cdga $\cO(E)$ is formal, and thus isomorphic to its cohomology, which is given by 
\begin{equation*}
H^i(E;\cO_E)=
\begin{cases}
k\mbox{ if $i=0$ or $i=1$}\\
0\mbox{ else}
\end{cases}
\end{equation*}
The cdga $\cO(S^1)=C^\ast(S^1;k)$ is also formal, which implies 
\begin{equation*}
\Aff{E}\simeq\Aff{S^1}
\end{equation*}
Since the algebra of derived global functions on $S^1$ is isomorphic to the commutative dg-algebra $k[\epsilon]$ with the variable $\epsilon$ in (cohomological) degree $1$, the affine stack $\Aff{S^1}$ is sometimes denoted also by $\bA^1[1]$ and referred to as the \emph{shifted affine line}. 

\begin{remark}
Working over a field $k$ of characteristic zero we can model simplicial commutative algebras over $k$ via connective commutative dg-algebras. In this setting, Ben-Zvi and Nadler develop in \cite{BZNLoopConn} a similar construction. They consider the functor
\begin{equation*}
\Spec:\mathrm{dg-cAlg}_{k}\rightarrow\dSt_k
\end{equation*}
sending $A$ to the functor  mapping $B\in\mathrm{dg-cAlg}_{k}^{\le 0}$ to $\mathrm{Map}_{\mathrm{dg-cAlg}_{k}}(A,B)$. This functor is right adjoint to $\cO$, and the affinization functor is $\Spec\cO:\dSt_k\rightarrow\dSt_k$.  
\end{remark}

\subsection{Tangents and loops}\label{subsection:tangents,loops,maps}
Given two derived stacks $X$ and $Y$, we denote their mapping stack as $\Map{X}{Y}$.   
As a presheaf on $\CAlg^\op$, the mapping stack is characterized by 
\begin{equation*}
\Map{X}{Y}(S)=\mathrm{Map}(X\times\Spec S,Y)
\end{equation*}
The mapping stack can also be defined relative to some base derived stack $B$: for $X$ and $Y$ over $B$, the mapping stack $\Map{X}{Y}_{B}$ relative to $B$ is given by 
\begin{equation*}
\Map{X}{Y}_{B}(S)=\mathrm{Map}_{/B}(X\times_{B}\Spec S,Y)
\end{equation*}

Given a derived geometric stack $X$, we can build new derived stacks via universal constructions. In this paper we will consider the shifted tangent $T_X[-1]$, the unipotent loop space $\cL^u X$, the loop space $\cL X$ and the derived stack of quasi-constant maps $\Mapo{E}{X}$ from an elliptic curve $E$. The first three constructions are well documented in the literature, and we will recall them in this section, while the latter is new and is one of the main objects of study of this paper. 

If $X$ is a derived geometric stack   its \emph{cotangent complex} is the quasi-coherent sheaf $$\bL_X\in\Qcoh(X)$$ corepresenting the functor of derivations. 
\begin{definition}
The \emph{shifted tangent bundle} of $X$, $T_X[-1]$, is the relative spectrum over $X$ $$T_X[-1]:=\Spec_{\cO_X}\mathrm{Sym}\,\bL_X[1]$$
The \emph{unipotent loop space} of $X$, $\cL^u X$, is the mapping stack $$\cL^u X:=\Map{\Aff{S^1}}{X}$$
The \emph{loop space} of $X$, $\cL X$, is the mapping stack $$\cL X:=\Map{S^1}{X}$$
\end{definition}
When $X$ is a derived scheme, the shifted tangent bundle, the loop space and the unipotent loop space are all equivalent. This follows from the fact that these three objects are cosheaves over the Zariski site of a derived scheme, as explained in \cite{BZNLoopConn}, and  they coincide   when the target $X$ is affine. As an example, we recall the relevant codescent statement for the loop space:
\begin{lemma}[Lemma 4.2, \cite{BZNLoopConn}]
Let $X$ be a derived scheme. The functor
$$
\Zar{X}\rightarrow \dStk \, , \quad  
U \in \Zar{X} \mapsto\cL U \in  \dStk
$$
is a cosheaf.
\end{lemma}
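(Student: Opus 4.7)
The plan is to exploit the presentation of the loop space as the derived self-intersection of the diagonal, $\cL U\simeq U\times_{U\times U} U$, which comes from writing the Betti stack $S^1$ as the pushout $\mathrm{pt}\sqcup_{\mathrm{pt}\sqcup\mathrm{pt}}\mathrm{pt}$. Given this description, the cosheaf property will reduce to two ingredients: a \emph{base change identity} $\cL V\simeq \cL U\times_U V$ for every Zariski open immersion $V\hookrightarrow U$, together with the universality of colimits in the $\infty$-topos $\dStk$ applied to ordinary Zariski descent for the identity functor.

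First I would establish the base change identity. A $T$-point of $\cL U\times_U V$ consists of maps $f:T\to V$ and $g:T\to U$ together with an equivalence in $U\times U$ between $\Delta_U\circ\iota\circ f$ and $\Delta_U\circ g$, where $\iota:V\hookrightarrow U$ denotes the inclusion. Unpacking this equivalence componentwise shows $g\simeq\iota\circ f$ in $U$, so in particular $t_0 g$ has image in $t_0 V$. Since open immersions of derived schemes are detected on the underlying classical schemes, $g$ itself factors uniquely through $V$, and comparing with the defining pullback $\cL V = V\times_{V\times V} V$ yields the desired natural equivalence.

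Next I would assemble the codescent argument. Let $\{U_i\to U\}$ be a Zariski cover with \v{C}ech nerve $\check{C}_\bullet$; descent for the identity gives $U\simeq\mathrm{colim}_{[n]}\check{C}_n$ in $\dStk$. Using that pullbacks preserve colimits in an $\infty$-topos, one computes
\begin{equation*}
\cL U \;\simeq\; \cL U\times_U\mathrm{colim}_n\check{C}_n \;\simeq\; \mathrm{colim}_n\big(\cL U\times_U\check{C}_n\big)\;\simeq\; \mathrm{colim}_n \cL(\check{C}_n),
\end{equation*}
where the last step applies the base change identity to each level, whose terms are disjoint unions of finite intersections of the $U_i$, again Zariski opens in $U$. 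This is precisely the cosheaf condition for $U\mapsto\cL U$.

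The main obstacle is the base change step, specifically the reduction of the derived factorization problem for $g:T\to U$ through $V$ to its classical shadow. This rests on the structural fact that Zariski open immersions of derived schemes are monomorphisms in $\dStk$ which can be tested after truncation --- a property built into the definition of open immersion recalled earlier in the paper. Once this input is in place, the rest of the argument is purely formal manipulation of fiber products together with universality of colimits.
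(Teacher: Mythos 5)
Your proof is correct. Note that the paper does not prove this lemma at all --- it is quoted verbatim as Lemma~4.2 of Ben--Zvi--Nadler \cite{BZNLoopConn} --- so there is no in-paper argument to compare against, but your reconstruction is sound and is essentially the argument one finds in the cited source. The two ingredients you isolate are the right ones: the base-change identity $\cL V \simeq \cL U\times_U V$ for a Zariski open $V\hookrightarrow U$, and universality of colimits in the $\infty$-topos $\dStk$ applied to the geometric realization of the \v{C}ech nerve of a cover. Two small remarks on precision. In the base-change step, once you have $g\simeq\iota\circ f$, the structural input you actually need is that $\iota$ is a monomorphism (so $V(T)\to U(T)$ is an inclusion of connected components of the mapping space), which is what lets you lift the second vertex of the loop and both homotopies back to $V$ essentially uniquely; passing through $t_0$ is a valid way to see this (since $V=U\times_{t_0 U}t_0 V$ for a flat open immersion), but the monomorphism formulation you give at the end is the cleaner one and is really what does the work. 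Second, the level-wise identification $\cL U\times_U\check{C}_n\simeq\cL(\check{C}_n)$ also silently uses that $\cL$ takes disjoint unions of open subschemes to the disjoint unions of their loop spaces (because the diagonal of a coproduct lands in the diagonal components of the product); this is true but deserves a sentence, as the base-change identity as stated applies to a single Zariski open rather than to the coproducts appearing in the \v{C}ech nerve.
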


All these constructions have formal counterparts, where one formally completes the stacks at a trivial locus corresponding to $X$. We start by  recalling the notion of formal completion in derived algebraic geometry. A summary of this and related notions can be found for example in \cite{HChen}.
\begin{definition}
Let $X$ and $Y$ be derived stacks, and let $f:X\rightarrow Y$ be a map. We define the \emph{formal completion} of $Y$ at $X$, $\widehat{Y}_X$, as the derived stack whose functor of points is the following: for every ring $S$, the space $\widehat{Y}_X(S)$ is the space of commutative diagrams
$$
\xymatrix{
\Spec \pi_0(S)^\mathrm{red}\ar[r]\ar[d] & X\ar[d]^{f} \\
\Spec S\ar[r] &Y
}
$$
where $\pi_0(S)^{\mathrm{red}}$ is the reduction of the discrete ring $\pi_0(S)$ considered as a constant simplicial commutative ring.
\end{definition}
We can describe formal completions alternatively using the \emph{de Rham} stack of a derived stack $Y$. Let $\mathrm{CRing^{\mathrm{red}}}$ be the category of reduced classical commutative rings, which embeds in the $\infty$-category of simplicial commutative rings $\CAlg$ as constant simplicial rings. 
In particular, we get a restriction functor
\begin{equation*}
i^\ast:\cP(\dAff)\rightarrow\cP(\mathrm{CRing^{\mathrm{red}}}^\op)
\end{equation*}
from derived prestacks to presheaves on $\mathrm{CRing^{\mathrm{red}}}^\op$. We can construct a right adjoint to this functor by sending a presheaf on $\mathrm{CRing^{\mathrm{red}}}^\op$ to its right Kan extension along $i$. We call this functor $i_\ast$. 
\begin{definition}
Let $Y$ be a derived prestack. Its \emph{de Rham} prestack is the derived prestack 
$$Y_{dR}:=i_\ast i^\ast Y$$
\end{definition}
By definition, given a derived ring $S$, an $S$-point of $Y_{dR}$ is a $\pi_0(S)^\mathrm{red}$-point of $Y$. 
Observe that the unit of the adjunction $\Id\rightarrow i_\ast i^\ast$ gives us a  map $Y\rightarrow Y_{dR}$. 
Via the de Rham stack we can describe the formal completion of $Y$ at $X$ as the pullback 
$$
\xymatrix{
\widehat{Y}_X\ar[r]\ar[d] & X_{dR}\ar[d]\\
Y\ar[r] & Y_{dR}
}
$$

Now we can define the formal completions at $X$ of the objects we described earlier.
\begin{definition}\label{def:formalloops}
The \emph{formal shifted tangent bundle} of $X$ is the formal completion of the shifted tangent bundle of $X$ at the zero section, and is denoted by $\widehat{T}_X[-1]$. The \emph{formal loop space} of $X$ is the formal completion of the loop space of $X$ at the constant loops, and is denoted by $\widehat{\cL}X$.
\end{definition}

\subsection{Quasi-constant maps}
\label{qcm}
We can now introduce one of the main objects of the paper, the \emph{derived stack of quasi-constant maps} $\Mapo{Y}{X}$ between two derived stacks $Y$ and $X$.  

Let $A$ be a finite abelian group isomorphic to a product of groups of roots of unity
\begin{equation}\label{eq:A}
 A \simeq \prod_{i=1, \ldots, r} \mu_{n_i}
\end{equation} 
For each map
$$
\alpha: Y \to [\Spec k/A] \to X
$$
let $(\Map{Y}{X})^{(\alpha)}$ be the connected component of $\Map{Y}{X}$ containing $\alpha$.
\begin{definition}
\label{quasiconstant}
 The \emph{derived stack of quasi-constant maps} is the union \begin{equation*}
\Mapo{Y}{X}:=\bigcup_{\alpha}\Map{Y}{X}^{(\alpha)}
\end{equation*}
\end{definition}

\begin{remark}
The structure map $Y \to \Spec k$ gives a closed embedding 
$$
X \simeq \Map{\Spec k}{X} \to \Map{Y}{X}
$$
which factors through $\Mapo{Y}{X}$. 
\end{remark}  
\begin{remark}
Definition \ref{quasiconstant} is designed to work in the setting where the target is the quotient   $[X/G]$ of a variety by the action of a (reductive) group, which is the framework we place ourselves in in this paper. Though adequate for what we plan to do in this article, Definition \ref{quasiconstant} is somewhat ad hoc. A more conceptual definition can be obtained by considering maps which are in a precise sense \emph{of degree zero}. Consider the map
\begin{equation}
\label{dr}
\Map{Y_{dR}}{X}\to \Map{Y}{X}
\end{equation}
induced by the unit $Y\to Y_{dR}$.
We believe that in general quasi-constant maps should be definable as the image of this map, i.e. the smallest clopen subset of $\Map{Y}{X}$ such that the map (\ref{dr}) factors through it. 
As a reality check, consider the case when $X$ is a classifying stack $[\Spec k/G]$, where $G$ is a reductive algebraic group. The image of (\ref{dr}) is the stack classifying $G$-bundles admitting a flat connection. These are degree 0 semi-stable $G$-bundles; further, the smallest clopen containing them is given by degree 0 $G$-bundles, i.e. in our terminology quasi-constant maps to $BG$.   
\end{remark}
We will mostly use Definition \ref{quasiconstant} in the situation where the source $Y$ is an elliptic curve $E$ over a field $k$ of characteristic zero. 
\begin{remark}
The bundles classified by $\Bun^0_T(E)$ are exactly the degree zero $T$--bundles on $E$, as those are the ones in the connected components of the $T$--bundles whose structure group can be reduced to $A$, for $A$ as in \eqref{eq:A}.
\end{remark}
\begin{remark}
The variant of Definition \ref{quasiconstant} involving only connected components of constants maps is in general insufficient for our purposes. The issue arises from quotients $[X/T]$ that have points with finite non-trivial stabilizers.
Consider $X=\bG_m$ with an action of $T=\bG_m$ with weight $w\neq 1$. The quotient $[X/T]$ is isomorphic to $[\Spec k/\mu_{|w|}]$. In this case, a simple  equivariant elliptic cohomology computation dictates that there should be an isomorphism  
$$
\Mapo{E}{[\mathbb{G}_m/\mathbb{G}_m]} \cong E[|w|] \times [\Spec k/\mu_{|w|}]
$$
where $E[|w|]$ denotes the $|w|$-torsion points in $E$. In particular, this stack has $|w|$-many connected components.  
Definition \ref{quasiconstant} is designed precisely so as to reproduce this expected behaviour. 
\end{remark}

In the following Propositions we give sufficient conditions under which it is enough to look at connected components of constant maps.
\begin{proposition}\label{example:MapoVar}
Let $Y$ and $X$ be  derived schemes. Then the stack of quasi-constant maps $\Mapo{Y}{X}$ coincides with the union of the connected components of the constant maps. In particular, if $X$ is connected, $\Mapo{Y}{X}$ is connected. 
\end{proposition}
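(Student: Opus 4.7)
The proof reduces to a single structural observation: when $X$ is a derived scheme, there are no non-trivial morphisms $[\Spec k/A]\to X$ other than those factoring through the structure map $[\Spec k/A]\to\Spec k$. Granting this, every $\alpha$ appearing in Definition \ref{quasiconstant} is itself a composition $Y\to\Spec k\to X$, that is, a constant map, and so $\Mapo{Y}{X}$ is precisely the union of the connected components of $\Map{Y}{X}$ that contain constant maps.

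To justify the factorization claim, I would argue as follows. The classifying stack $BA = [\Spec k / A]$ is the geometric realization of the simplicial object $(\Spec k \times A^\bullet)$, and since $A$ is a finite (abelian) group of roots of unity over a characteristic zero field, each $A^n$ is a finite disjoint union of copies of $\Spec k$. Applying $\Map(-,X)$ yields
\begin{equation*}
\Map(BA,X) \simeq \lim_{\Delta} X(k\otimes \cO(A)^{\otimes \bullet}).
\end{equation*}
Because $X$ is a derived scheme and $A$ acts trivially on $\Spec k$, the induced $A$-action on $X(k)$ is trivial; the cosimplicial diagram collapses onto its $0$-coskeleton, giving $\Map(BA,X)\simeq \Map(\Spec k, X) = X$. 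Equivalently, one can say that a derived scheme $X$ has trivial inertia at every point, so the canonical map from $BA$ to its coarse moduli space $\Spec k$ is initial among maps to $X$. This step is the main technical point of the proposition, but it is a standard fact about maps from gerbes to sheaf-valued (i.e.\ scheme) targets. Consequently each $\alpha$ in Definition \ref{quasiconstant} is constant.

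For the second assertion, assuming $X$ is connected, I would exhibit a single connected subobject of $\Map{Y}{X}$ that contains all constant maps. The identity $Y\times X\to Y\times X$ together with the second projection $Y\times X\to X$ yields by adjunction a map $c:X\to\Map{Y}{X}$ whose $S$-points send $x:\Spec S\to X$ to the composition $Y\times\Spec S\to\Spec S\xrightarrow{x}X$; that is, $c$ parametrizes constant maps. Since $X$ is connected, the map $c$ factors through a single connected component of $\Map{Y}{X}$, by the initiality property in the definition of $\mathrm{Clopen}_{(-)}\Map{Y}{X}$. But by the first part of the proof $\Mapo{Y}{X}$ is exactly the union of connected components of $\Map{Y}{X}$ meeting the image of $c$, and this is a single component; hence $\Mapo{Y}{X}$ is connected, completing the proof.
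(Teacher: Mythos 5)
Your proof is correct and isolates the same pivotal fact as the paper's, namely that any morphism $[\Spec k/A]\to X$ with $X$ a derived scheme factors through $\Spec k$, so that every reference map $\alpha$ in Definition \ref{quasiconstant} is already a constant map. The difference lies in how that fact is justified: the paper's one-line appeal to full faithfulness of $\dSch\hookrightarrow\dSt$ does not by itself explain why a factorization through $[\Spec k/A]$ collapses onto the coarse space, whereas you resolve $BA$ by the bar construction and compute the totalization directly, so your route is a genuinely more explicit (and more complete) argument for the same lemma. Two small corrections to that computation. First, $A^n$ is a disjoint union of copies of $\Spec k$ only when $k$ contains the relevant roots of unity; in general it is just a finite \'etale classical $k$-scheme. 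This does not damage the argument, because what you actually need is that each $\Map(A^n,X)=X(\cO(A)^{\otimes n})$ is a discrete space (it is, since $\cO(A)^{\otimes n}$ is a classical $k$-algebra and $X$ is a derived scheme), and that the two coface maps $d^0,d^1\colon X(k)\rightrightarrows X(\cO(A))$ coincide, both being induced by the unit $k\to\cO(A)$; the totalization of a cosimplicial discrete space is the equalizer of the two cofaces, which here is all of $X(k)$. Second, ``collapses onto its $0$-coskeleton'' is not quite the standard phrase for this reduction, though the intent is clear. Your treatment of the connectedness claim via the tautological section $c\colon X\to\Map{Y}{X}$ and the initiality defining connected components is correct and spells out what the paper dismisses as ``clear.''
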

\begin{proof}
This is a direct consequence of the full embedding of derived schemes in derived stacks.
\end{proof}
\begin{proposition}
\label{connectedstab}
Let $X$ be a variety with an action of  an algebraic torus $T$, and let $E$ be an elliptic curve over $k$. Assume the $T$-action on $X$ is such that the $T$-orbits in $X$ have connected stabilizers. Then the stack of quasi-constant maps $\Mapo{E}{[X/T]}$ coincides with the union of the connected components of the constant maps. In particular, if $X$ is connected, $\Mapo{E}{[X/T]}$ is connected. 
\end{proposition}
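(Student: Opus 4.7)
The plan is to reduce to showing that every map $\alpha \colon E \to [\Spec k / A] \to [X/T]$ appearing in Definition~\ref{quasiconstant} lies in the connected component of a constant map; granting this, the first assertion is immediate, and the \emph{in particular} clause follows since the substack of constant maps is isomorphic to $[X/T]$ and is thus connected whenever $X$ is. I would begin by unpacking $\alpha$ as a triple $(P, \phi, x)$: an $A$-torsor $P \to E$, a homomorphism $\phi \colon A \to T$, and a point $x \in X$ fixed by $A$ acting through $\phi$. Equivalently, $\alpha$ is the $T$-bundle $P \times^A T$ together with the $T$-equivariant map $[p, t] \mapsto t \cdot x$. The pivotal observation is that $A$-fixedness of $x$ forces $\phi$ to factor through the stabilizer $T_x \subset T$; the connectedness hypothesis then upgrades $T_x$ to a subtorus of $T$.

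Next I would deform through $T_x$-bundles. Setting $\mathcal{P} := P \times^A T_x$, one has $P \times^A T \simeq \mathcal{P} \times^{T_x} T$; since $A$ is finite, the class of $\mathcal{P}$ is torsion in $H^1(E, T_x)$, so $\mathcal{P}$ has degree zero and lies in the connected stack $\mathrm{Bun}^0_{T_x}(E) \simeq \mathrm{Pic}^0(E)^{\otimes_{\bZ} \check T_x} \times B T_x$. I would then choose a connected base $S$ with points $s_0, s_1$ and a family $\mathcal{P}_S$ of $T_x$-bundles on $E \times S$ satisfying $\mathcal{P}_{s_0} \simeq \mathcal{P}$ and $\mathcal{P}_{s_1}$ trivial, and consider the associated $T$-bundle $\mathcal{Q}_S := \mathcal{P}_S \times^{T_x} T$ equipped with the $T$-equivariant map $\mathcal{Q}_S \to X$, $[p, t] \mapsto t \cdot x$ (well-defined precisely because $T_x$ fixes $x$). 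This packages into a morphism $S \to \Map{E}{[X/T]}$ whose restrictions at $s_0$ and $s_1$ are, respectively, $\alpha$ and the constant map at $[x] \in [X/T]$, so $\alpha$ lies in the connected component of a constant map.

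The conceptual crux, and the only step I expect to require genuine care, is the use of the connected-stabilizer hypothesis at exactly the right moment: it is what permits the $A$-torsor to be absorbed into a $T_x$-bundle and then deformed to the trivial bundle \emph{through $T_x$-bundles}, while keeping the target point $x$ fixed throughout the deformation. The preceding example of $[\Gm/\Gm]$ acting with weight $w \neq 1$ shows that the hypothesis is indispensable: there $T_x = \mu_w$ is disconnected, $\phi \colon \mu_w \to \Gm$ is rigid, and $\Mapo{E}{[\Gm/\Gm]}$ genuinely acquires extra components indexed by the torsion subgroup $E[|w|]$. The remaining verifications---that the constructed family is equivariant, well-defined on the relevant quotient, and truly recovers $\alpha$ at $s_0$---are routine bookkeeping.
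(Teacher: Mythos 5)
Your proof is correct, and it takes a genuinely different route from the paper's. The paper's argument invokes the codescent theorem of Section~\ref{section:proofcodescent} (a forward reference, which the authors acknowledge) to reduce to maps landing in a single $T$-orbit, then restricts to rank one ``for simplicity'' and does a case analysis on orbit types (fixed points versus free orbits). Your argument is self-contained and works in arbitrary rank: you unpack a quasi-constant map $\alpha\colon E\to[\Spec k/A]\to[X/T]$ into the triple $(P,\phi,x)$, observe that $A$-fixedness of $x$ forces $\phi$ to land in the stabilizer $T_x$, and then use the connected-stabilizer hypothesis to promote $T_x$ to a subtorus. The crucial numerical input --- that $\mathcal{P}=P\times^A T_x$ is a torsion class in $H^1(E,T_x)\cong\mathrm{Pic}(E)^{\mathrm{rk}\,T_x}$, hence of degree zero --- then places $\mathcal{P}$ in the connected stack $\Bun^0_{T_x}(E)$, and your explicit family $\mathcal{P}_S\times^{T_x}T$ over a connected base $S$, equipped with $[p,t]\mapsto t\cdot x$, produces the required path from $\alpha$ to the constant map at $[x]$. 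What your approach buys is independence from codescent and uniformity across ranks; what the paper's approach buys is a geometric picture (orbit-by-orbit analysis) that dovetails with the surrounding exposition, where codescent is in any case the central theme. Both proofs use the connected-stabilizer hypothesis at the same conceptual pivot, namely to absorb the finite-group datum into a connected torus whose degree-zero bundles form a connected moduli, and both correctly identify the $[\Gm/\Gm]$ example as showing the hypothesis cannot be dropped.

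One minor point worth making explicit in your write-up: to identify maps $[\Spec k/A]\to[X/T]$ with pairs $(\phi,x)$ as you do, you implicitly use that $T$-torsors over $\Spec k$ are trivial (Hilbert~90 for split tori) and that $T$ is abelian so the resulting homomorphism $\phi\colon A\to T$ is well defined rather than only up to conjugacy. Neither is an issue in the paper's setting, but stating it would make the ``routine bookkeeping'' routine in fact.
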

\begin{proof}
We will assume for simplicity that $T$ is rank one. The general argument is a simple extension of the rank one case. This proof requires the codescent property, which will be proved in Section \ref{section:proofcodescent} as Theorem \ref{proposition:codescentnormalvar}. It follows from that  codescent result that points in $\Mapo{E}{[X/T]}$ correspond to maps whose image is contained in a single $T$-orbit in $X$, which under our assumptions is either free or a fixed point.  

Let $x \in X$ be a fixed point for the $T$-action. 
We need to consider maps from $E$ to $[x/T]$ factoring through $[\Spec k/\mu_n]$ for all positive integers $n$. 
These maps classify $T$-bundles on $E$ admitting a reduction of their structure group to $\mu_n$. But these all lie in the connected component of $\Pic(E)$ classifying degree zero bundles. 

Now consider a free orbit $O$. 
Since $T$ acts freely on $O$, we have that $[O/T]=\Spec k$. So any map to $[O/T]$ factors necessarily through the point.  This concludes the proof.
\end{proof}

An important class of examples satisfying the assumptions of Proposition \ref{connectedstab} is given by toric varieties with the standard torus action. 
\begin{corollary}
Let $X$ be a smooth toric variety equipped with standard action by the torus $T$ and let $E$ be an elliptic curve. Then the stack of quasi-constant maps $\Mapo{E}{[X/T]}$ coincides with the connected component of the constant maps.
\end{corollary}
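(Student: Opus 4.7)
The plan is to simply verify that the hypothesis of Proposition \ref{connectedstab} (connected stabilizers on all $T$-orbits) is satisfied for a smooth toric variety under its standard torus action, and then invoke that proposition directly to conclude both connectedness and the identification with the component of constant maps.

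First I would recall the orbit-cone correspondence for a toric variety $X = X_\Sigma$ associated to a fan $\Sigma$ in $N_\mathbb{R}$, where $N$ is the cocharacter lattice of $T$. Under this correspondence, the $T$-orbits are indexed by the cones $\sigma \in \Sigma$: to each $\sigma$ is associated an orbit $O_\sigma \simeq \mathrm{Hom}(\sigma^\perp \cap M, \mathbb{G}_m)$, and the stabilizer of any point of $O_\sigma$ is the subgroup $T_\sigma \subset T$ whose cocharacter lattice is $N_\sigma := \mathrm{span}_\mathbb{R}(\sigma) \cap N$.

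The key geometric input is smoothness. Recall that $X_\Sigma$ is smooth precisely when every cone $\sigma \in \Sigma$ is smooth, meaning that $\sigma$ is generated by a subset of a $\mathbb{Z}$-basis of $N$. When this holds, the sublattice $N_\sigma$ is a direct summand of $N$, hence $N/N_\sigma$ is torsion-free. Therefore $T_\sigma = N_\sigma \otimes_{\mathbb{Z}} \mathbb{G}_m$ is a genuine subtorus of $T$, and in particular it is connected. Applied to every cone in $\Sigma$, this shows that \emph{every} $T$-orbit in $X$ has connected stabilizer.

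With this verified, Proposition \ref{connectedstab} applies verbatim, giving that $\Mapo{E}{[X/T]}$ coincides with the union of the connected components of the constant maps. Since $X$ is by hypothesis connected, the final statement of Proposition \ref{connectedstab} then yields that $\Mapo{E}{[X/T]}$ is itself connected, and equal to the connected component of the constant maps. There is no real obstacle here: the whole content is packaged into the known structural fact about smooth toric varieties that stabilizers of the standard action are subtori, a standard consequence of the orbit-cone dictionary, so the corollary is an immediate specialization of the preceding proposition.
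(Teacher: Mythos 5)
Your proposal is correct and takes the same route the paper intends: the corollary is stated in the paper without an explicit proof, immediately after the sentence that toric varieties with the standard action satisfy the hypotheses of Proposition~\ref{connectedstab}, and your argument simply verifies that claim via the orbit--cone correspondence. One small remark: the step where you invoke smoothness to conclude that $N_\sigma = \mathrm{span}_\mathbb{R}(\sigma)\cap N$ is a direct summand is not actually needed --- $N_\sigma$ is by construction a saturated sublattice of $N$ (so $N/N_\sigma$ is torsion-free) for \emph{any} cone in any fan, which is why stabilizers of the standard action on any normal toric variety are subtori. Smoothness enters the corollary for other reasons in the paper, but the connected-stabilizer condition holds more generally.
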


Just as in the case of the shifted tangent bundle and of the loop space, we can consider a formal completion of the derived stack of quasi-constant maps. 
\begin{definition}
The derived stack of \emph{formal maps}, denoted by $\fMapo{Y}{X}$, is the formal completion of $\Mapo{Y}{X}$ at the constant maps 
$$
X \to \Mapo{Y}{X}
$$
\end{definition}

\begin{proposition}
Let $E$ be an elliptic curve over $k$. Then, if the target stack $\cX$ is a finitely presented variety $X$ or a quotient  stack $[X/T]$ with $X$ a variety, the stack of quasi-constant maps $\Mapo{E}{\cX}$ is $1$-geometric.
\end{proposition}
\begin{proof}
Note that $X$ is in particular finitely presented over $k$, thus this is a direct application of Theorem 5.1.1 in \cite{HLP}. 
\end{proof}
 

\subsection{Equivariant elliptic Hochschild homology}
In this section we define equivariant elliptic Hochschild homology,  which is our main object of study in this article. 
\begin{definition}
Let $G$ be a smooth reductive algebraic group and let $Y$ be a scheme.
\begin{itemize}
\item The \emph{derived stack of principal $G$-bundles} on   $Y$   is the mapping stack 
$$\Bun_G(Y):=\Map{Y}{[\Spec k/G]}$$ 
The \emph{derived stack of principal $G$-bundles of degree zero} on $Y$ is 
$$\Bun^0_G(Y):=\Mapo{Y}{[\Spec k/G]}$$
\item Let $G=T$ be an algebraic torus of rank $n$. We set 
$$\Pic(Y)_T:=\Bun_T(Y)$$
$$\Pic^0(Y)_T:=\Bun^0_{T}(Y)$$
\end{itemize}
\end{definition}
\begin{remark}
When $Y$ is connected, $\Pic^0(Y)_T$ is the connected component of the trivial rank $n$ bundle over $Y$.
\end{remark}
\begin{remark}
When $T$ is of rank $1$, we simply write $\Pic(Y)$ in place of $\Pic_T(Y)$ and  $\Pic^0(Y)$ in place of $\Pic^0(Y)_T$.
\end{remark}

We will apply these definitions in the case when $Y$ is an elliptic curve $E$ over a field $k$ of characteristic zero. 
Let $\check{T}$ be the cocharacter lattice of $T$. 
We have decompositions  
$$
\Pic(E)_T\simeq (\mathrm{Pic}(E)\otimes_{\bZ}\check{T})\times [\Spec k/T], \quad \Pic(E)^0_T\simeq E\otimes_{\bZ}\check{T}\times [\Spec k/T]
$$
where $\mathrm{Pic}(E)$ denotes the Picard scheme of $E$. In particular, these stacks are underived. In the rank 1 case, 
$$\Pic(E)\simeq\mathrm{Pic}(E)\times[\Spec k/\bG_m]$$ 
$$\Pic^0(E)\simeq \mathrm{Pic}^0(E)\times [\Spec k/\bG_m]\simeq E\times [\Spec k/\bG_m]$$

Although we are interested in torus actions, a few steps of our argument  will depend on considering more general actions where $T$  is the product of a torus $T'$ and of a finite abelian group $A$ isomorphic to a product of groups of roots of unity
$$
A \simeq \prod_{i=1, \ldots, r} \mu_{n_i}
$$
Note that there is an equivalence
$$
\Bun_A(E) \simeq \Bun^0_A(E) \simeq \prod_{i=1, \ldots, r} 
\big ( E[n_i] \times [\Spec k/\mu_{n_i}] \big )
$$
where $E[n_i]$ denotes the $n_i$-torsion points in $E$. This induces an equivalence 
$$
\Bun^0_T(E) \simeq \big ( E\otimes_{\bZ}\check{T'}\times [\Spec k/T'] \big ) \times \big ( \prod_{i=1, \ldots, r} E[n_i] \times [\Spec k/\mu_{n_i}] \big )
$$
This stack carries a map towards its coarse moduli space
$$
\Bun^0_T(E) \to E\otimes_{\bZ}\check{T'} \times \big ( \prod_{i=1, \ldots, r} E[n_i]   \big )
$$
which we denote  $E_T$. When $T$ is a torus, fixing an isomorphism $T\simeq(\bG_m)^n$ identifies this scheme with a product of $rk(T)$ copies of $E$.

Consider a variety $X$ over $k$ with the action of an abelian group $T$ which decomposes as the product of a torus $T'$ and a finite abelian group $A$ as above. The structure map $X\rightarrow\Spec k$ induces a map
$$
p':\Mapo{E}{[X/T]}\rightarrow\Mapo{E}{[\Spec k /T]}
$$ 
We denote $p$ the composition of $p'$ with the projection $\Bun^0_T(E)\rightarrow E_T$.
\begin{definition}
\label{elliptichoch}
The $T$-\emph{equivariant elliptic Hochschild homology} of $X$ is 
$$\cHH_E([X/T]):=p_\ast\cO_{\Mapo{E}{[X/T]}} \in \Qcoh(E_T) $$
We denote by $\mathrm{HH}_E([X/T])$ the global sections of the sheaf $\cHH_E([X/T])$. We refer to $\cHH_E([X/T])$ also as the \emph{elliptic Hochschild homology} of $[X/T]$. 
\end{definition}

\begin{remark} We can say that Elliptic Hochschild homology  defines a cohomology theory for quotient stacks, in the sense that it satisfies all  the expected properties. In particular, it follows from the definition  that a $T$-equivariant map $Y \to X$ induces algebra maps
$$
\cHH_E([X/T]) \to \cHH_E([Y/T]) \quad \text{and} \quad 
 \mHH_E([X/T]) \to \mHH_E([Y/T])$$
Further, a group homomorphism $f:T\to T'$ induces a map $\phi_f:E_T\to E_{T'}$, and there is a natural map
\[
\cHH_E([X/T'])\to (\phi_f)_\ast\cHH_E([X/T])
\]
and similarly for the global sections.
Additionally $\cHH_E(-)$ satisfies a form of Mayer--Vietoris. This is a key property, which will be proved in Theorem \ref{proposition:codescentnormalvar}.
\end{remark}

In fact, we can define more generally $G$-equivariant elliptic Hochschild homology for any (reductive) algebraic group $G$. This will spelled out in Section  \ref{sect:ReductiveG}. 

\subsection{Complexified Equivariant Elliptic Cohomology}\label{subsection:PerlimGroj}
Here we present a short review of rationalized equivariant elliptic cohomology. This object was axiomatically defined by Ginzburg--Kapranov--Vasserot in \cite{GKV} and constructed by Grojnowski in \cite{groj}.
We review Grojnowski's construction following mostly the more recent exposition found in \cite{Gan14} and \cite{SchSib}. Other reviews closer in style to the original can be found in \cite{And1}, \cite{Ros3} and \cite{GreenRat}. We remark that Grojnowski's paper only sketches the construction, and that the details were carried out by Ro\c{s}u in \cite{Ros1}. 

Let $X$ be a finite $T$-CW-complex, where $T$ is a torus of rank $n$. We construct complex $T$-equivariant elliptic cohomology of $X$ as an object in the $\bZ_2$-periodic $\infty$-category of \emph{complex analytic coherent} sheaves over the complex analytic variety $E_T := E\otimes_{\bZ}\check{T}$, which is then viewed as an algebraic coherent complex via standard GAGA arguments, yielding
$$
\El_T(X) \in \Perf(E_T)_{\bZ_2}
$$
 
\begin{remark}
As $E_T$ is a smooth Noetherian underived scheme, $\Perf(E_T)_{\bZ_2}\simeq\Coh(E_T)_{\bZ_2}$. $\Coh(E_T)$ is the \emph{$\infty$-category of coherent sheaves}, i.e. the full subcategory of $\QCoh(E_T)$ spanned by bounded complexes having coherent homotopy sheaves.
\end{remark}

First, we set up some notation. 
Let $C_T^\ast(X)$ be the $T$-equivariant singular cochains on $X$, i.e. the singular cochains of the Borel construction $C^\ast(X//T)$. 
The \emph{sum}-$\bZ_2$-periodization of the $T$-equivariant singular cochains, denoted by $C_T^{\oplus,\ast}(X)$, is defined as
$$C_{T}^{\oplus,\ast}(X):=\bigoplus_{i\in\bZ}C_T^{\ast+2i}(X;\bC)$$
Analogously, we introduce the \emph{product}-$\bZ_2$-periodization as
$$C_{T}^{\prod,\ast}(X):=\prod_{i\in\bZ}C_T^{\ast+2i}(X;\bC)$$

Grojnowski's  insight is that complexified equivariant elliptic cohomology is locally controlled by the singular equivariant cohomology of loci in $X$ fixed by subgroups of $T$.

\begin{definition}\label{definition:T(x)}
Let $e$ be a closed point of $E_T$.  Let $S(e)$ be the set of subtori $T'\subset T$ such that $e$ belongs to $E_{T'}\subset E_T$. Then set 
$$T(e):=\bigcap_{T'\in S(e)}T'$$
\end{definition}
\begin{remark}\label{rmk:T(x)nonclosed}
In Section \ref{section:Tate} we will need an extension of this notion to  points of $E_T$ which are not necessarily closed. Let $x\in E_T$ be any point.
The subgroup $T(x)$ of $T$ associated to $x$ is the smallest subgroup of $T$ such that $E_{T(x)}$ contains the closure of $x$, $\overline{\{x\}}$, i.e. 
$$T(x):=\bigcap_{K\subset T|\overline{\{x\}}\subset E_K} K$$
Moreover, we also set 
$$T'(x):=T/T(x)$$
\end{remark}

\begin{remark}
Note that, if $x$ is a non-closed point, then $\mathrm{rk}(T'(x))\leq\mathrm{rk}(T)-1$. This will be relevant in our inductive proof of the comparison theorem between our theory and Grojnowski's equivariant elliptic cohomology of the analytification,  Theorem\ref{theorem:comparisongeneral}.
\end{remark}
\begin{remark}
Fixing an isomorphism $T\cong \bG_m^n$ induces an isomorphism $E_T \cong E^n$. Under this identification, we can characterize the subgroup $T(e)$ for a closed point $e$ as follows. Let $$e=(e_1,\dots, e_n) \in E^n$$ and assume that
\begin{itemize}
\item $e_{i_1}, \ldots, e_{i_l} \in E$ are torsion  with order $|e_{i_j}|=n_j$
\item For all $k \notin \{i_1, \ldots, i_l\}$, 
$e_k \in E$ is not torsion 
\end{itemize}
Then, up to shuffling the factors,  
$$
T(e) = \big ( \prod_{i=i_1}^{i_l}\mu_{n_i} \big ) \times (\bG_m)^{n-l} \subset T
$$ 
\end{remark}

We are now ready to construct complexified equivariant elliptic cohomology. First of all, recall that $C_T^{\oplus,\ast}(X)$ is a module over $C_T^{\oplus,0}(\ast)$. This is a formal commutative dg-algebra concentrated in degree zero, and in particular we have an equivalence  
$$C_T^{\oplus,0}(\ast)\simeq H_T^{\oplus,0}(\ast)=\bC[u_1, \dots, u_{\mathrm{rk}(T)}]$$ 
Equivalently, we can regard the module $C_T^{\oplus,\ast}(X)$ as an object in $\Perf(E_T)_{\bZ_2}$ (as $X$ is a finite $T$-CW-complex).
Let us call $\mathcal{H}_T(X)$ this object. By definition $\cH_T(X)$ is a sheaf of algebras over $\Spec C_T^{\oplus,\ast}(\ast)\simeq\ft_\bC$, which is the complexified Lie algebra of the torus $T$. Denote by  $\mathcal{H}_T^{\an}(X)$ its analytification, i.e. the coherent analytic sheaf 
$$\mathcal{H}_T^{\an}(X):=\mathcal{H}_T(X)\otimes_{\cO_{\ft_\bC}}\cO_{\ft_\bC}^\an$$ 

There is a quotient map
$$\mathrm{exp^2}:\bA^n_\bC\to E_T$$
which is an isomorphism if restricted to sufficiently small analytic disks $U$ in $E_T$. Let us call $\mathrm{log^2}$ its local inverse. Moreover, the group structure on $E_T$ induces translation maps
\begin{align*}
\tau_e: & E_T\to E_T \\ & f\mapsto fe
\end{align*}
for all closed points $e$ in $E_T$ (we use multiplicative notation for the group operation on $E_T$).
Then, for a closed point $e\in E_T$ and a sufficiently small analytic neighbourhood $U_e$ of $e$, so that $U_1\subset \tau_{e^{-1}}(U_e)$, we set
$$\El_T^{\an}(X)|_{U_e}:=(\tau_e\circ\mathrm{exp^2})_{\ast}\cH_T^{\an}(X^{T(e)})|_{\mathrm{log^2}(e^{-1}U_e)}$$
As summarized in \cite{Gan14}, these open sets cover $E_T$ and transition isomorphisms between $\El_T^{\an}(X)|_{U_e}$ and $\El_T^{\an}(X)|_{U_e'}$ can be defined for all closed points $e$ and $e'$ in terms of the localization theorem in equivariant cohomology. These isomorphisms satisfy the cocycle identities and thus give rise to a complex-analytic sheaf denoted by $\El_T^{\an}(X)$.

We reserve the name $\El_T(X)$ for the algebraic sheaf obtained from $\El_T^{\an}(X)$ via GAGA.
\begin{remark}
Grojnowski's original construction involves singular cohomology rather than singular cochains. His construction can be obtained from ours by taking the cohomology sheaves of $\El_T(X)$.
\end{remark}

The completions of the periodic version of Grojnowski's sheaf over closed points $x$ of $E_T$ are given by a \emph{product}-$\bZ_2$-periodization of $T$-equivariant singular cohomology:
$$\El_T(X)_{\widehat{x}}\simeq C_{T}^{\prod,\ast}(X^{T(x)})\simeq C_{T}^{\oplus,\ast}(X^{T(x)})\otimes_{C_T^{\oplus,0}(\ast)}\cO_{E_T,\widehat{x}}$$
where $\cO_{E_T,\widehat{x}}$ is a module over $C_T^{\oplus,0}(\ast)\simeq\cO(\ft_\bC)$ via the completed multiplication map $$\widehat{\mu}_x:E_{T,\widehat{1}}\simeq E_{T,\widehat{x}}$$
and the identification $E_{T,\widehat{1}}\simeq \ft_{\bC,\widehat{0}}$.

\subsection{Adelic descent}
\label{sec:adelic}
In the last section of this paper we make extensive use of adelic descent theory for $n$-dimensional schemes. This theory was first introduced by Parshin \cite{ParAd} and Beilinson \cite{BeilAd}, the reader can consult \cite{HubAd} and \cite{MorAd} for a review. We will follow the modern treatment given in \cite{Groech}, which is  the main reference for the short reminder below. 

Let $X$ be a Noetherian scheme. For two points $x$ and $y$ we say $x\geq y$ if $y\in\overline{\{x\}}$.
We let $|X|_k$ denote the set of \emph{$k$-chains} on $X$, i.e. sequences of $k+1$ ordered points $(x_0\geq\dots\geq x_k)$ in $X$. If $k=0$, we equivalently write $|X|=|X|_0$.
Finally, given a subset $T\subset|X|_k$, we set
$$_{x}T:=\{\Delta\in |X|_{k-1}|(x\geq \Delta)\in T\}$$

This notation allows us to define sheaves of ad\`eles on $X$ for a subset $T\subset|X|_{k}$. The ad\`eles are the unique family of exact functors $$\bfA_{X}(T,-):\QCoh(X)\to\Mod_{\cO_X}$$ satisfying the following properties:
\begin{itemize}
	\item $\bfA_{X}(T,-)$ commutes with directed colimits;
	\item if $\cF$ is coherent and $k=0$, $\bfA_{X}(T,\cF)=\prod_{x\in T}\lim_{r\geq 0}\tilde{j}_{rx}\cF$;
	\item if $\cF$ is coherent and $k>0$, $\bfA_{X}(T,\cF)=\prod_{x\in|X|}\lim_{r\geq 0}\bfA_{X}(_{x}T,\tilde{j}_{rx}\cF)$.
\end{itemize}

In the above, $\tilde{j}_{rx}$ denotes the functor $j_{rx\,\ast}j_{rx}^{\ast}$, where 
$$j_{rx}:\Spec \cO_{X,x}/\mathfrak{m}_x^r\to X$$
is the canonical immersion of an $r$-thickening of the point $x$. Here $\cO_{X,x}$ is the local ring at $x$ and $\mathfrak{m}_x$ is its maximal ideal.

\begin{definition}
The global sections $\Gamma(X, \bfA_{X}(T,\cF))$ are denoted by $\bA_{X}(T,\cF)$ and are called the groups of ad\`eles. 
\end{definition}

The sets $|X|_k$ can be assembled into a simplicial set: face and degeneracy maps are defined, respectively, by removing or repeating a point in a chain.  
We denote this simplicial set  by $|X|_{\bullet}$. In particular,  for any $T_\bullet\subset|X|_\bullet$, the sheaves of ad\`eles assemble into a cosimplicial sheaf of $\cO_{X}$--modules $\bfA_{X}(T_\bullet,\cF)$. If $T_\bullet$ coincides with $|X|_\bullet$ we denote this cosimplicial sheaf by $\bfA_{X}^{\bullet}(\cF)$ and its global sections by $\bA_{X}^{\bullet}(\cF)$. If $\cF=\cO_X$, we denote the cosimplicial sheaves and groups of ad\`eles by $\bfA_{X}^{\bullet}$ and $\bA_{X}^{\bullet}$ respectively.

Similarly, there is a cosimplicial sheaf given by products of ``local'' ad\`eles
$$[n]\mapsto\prod_{\Delta\in|X|_n}\bfA_{X}(\Delta,\cF)$$
Theorem 2.4.1 in \cite{HubAd} tells us that the natural inclusion of the full ad\`eles into the product of local ad\`eles respects the cosimplicial structures.

We conclude this section with two theorems that allow to reconstruct sheaves from their adelic decomposition.

\begin{theorem}[Theorem 3.1 in \cite{Groech}]
Let $X$ be a Noetherian scheme. Then there is an equivalence of symmetric monoidal $\infty$-categories
$$\Perf^{\otimes}(X)\simeq\Tot\Perf^{\otimes}(\bA_{X})$$
\end{theorem}

The following theorem due to Beilinson appears as Theorem 1.16 in \cite{Groech}.
\begin{theorem}[Beilinson \cite{BeilAd}]
Let $\cF$ be a quasi-coherent sheaf on $X$. The augmentation $$\cF\to\bfA_{X}^{\bullet}(\cF)$$ is a resolution of $\cF$ by flasque $\cO_X$-modules. In particular, the totalization of the ad\`eles $\Tot \bA_{X}^{\bullet}(\cF)$ computes the cohomology of $\cF$.
\end{theorem}

The objects we will consider in Section \ref{section:Tate} belong rather to the \emph{$\bZ_2$-periodic} categories of perfect complexes. The arguments made by Groechenig in \cite{Groech} also hold in this context, leading to completely parallel statements involving the $\bZ_2$-periodic categories.
\begin{remark}
Let us remark that in Section \ref{section:Tate} we will use a variant of Beilinson's theorem for perfect complexes, i.e. that the adelic descent data of a perfect complex, computed as in Beilinson's definition where we interpret the operations in the derived sense, recovers the original perfect complex after totalization. This follows from Theorem 3.1 in \cite{Groech}.
\end{remark}

\section{Codescent for quasi-constant maps}
\label{section:codescentpropstatement}
\label{section:proofcodescent}
In this section we prove that the stack of quasi-constant maps is Zariski local on the target. This behaviour is in sharp contrast with the full mapping stack, where locality on the target is essentially never satisfied.  
We will  give a proof of this statement in the case when the source is an elliptic curve, which is the case that is most relevant for our applications, but we will also comment on  extensions of our results to more general settings (see Section \ref{sec:general}). One of the ingredients in our argument is a simple criterion that allows us to detect when an open immersions of geometric stacks is an equivalence, Proposition \ref{proposition:pointwisecriterion2} below.

We start by proving a few simple general properties of the stack of quasi-constant maps.

Let $S$ be a derived stack. Consider the functor 
$$
\Mapo{S}{-}: \dStk \to \dStk
$$ 
If $f:X \to Y$ is a map in $\dStk$, we denote by $r_f$ the induced map
$$
r_f: \Mapo{S}{X} \to \Mapo{S}{Y}
$$
\begin{lemma}\label{remark:Mapolimitpreserving}
The functor  
$$
\Mapo{S}{-}: \dStk \to \dStk
$$  preserves limits. 
In particular, if $f: X \to Y$ is a map in $\dStk$,  there is an equivalence of simplicial objects in $\dStk$
$$
\Mapo{S}{\check{\mathrm{C}}(f)} \simeq  \check{\mathrm{C}}(r_f) 
$$
\end{lemma}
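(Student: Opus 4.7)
The plan is to leverage the fact that the full mapping stack functor $\Map{S}{-}: \dStk \to \dStk$ is right adjoint to the product functor $(-) \times S$, and hence automatically preserves limits. Thus for any limit diagram $X \simeq \lim_{i} X_i$ in $\dStk$ there is a canonical equivalence $\Map{S}{X} \simeq \lim_{i} \Map{S}{X_i}$. The content of the lemma reduces to showing that the open-and-closed substacks $\Mapo{S}{X_i} \hookrightarrow \Map{S}{X_i}$ glue under the limit to reproduce $\Mapo{S}{X}$.

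First I would record that $\Mapo{S}{X}$ is by construction a union of connected components of $\Map{S}{X}$ and is therefore a clopen substack. Pulling back along each projection $\Map{S}{X} \to \Map{S}{X_i}$, each $\Mapo{S}{X_i}$ defines a clopen substack of $\Map{S}{X}$, and since clopen substacks are stable under (iterated) fiber products, $\lim_{i} \Mapo{S}{X_i}$ sits inside $\Map{S}{X}$ as a clopen substack. The task is to identify it with $\Mapo{S}{X}$.

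For the inclusion $\Mapo{S}{X} \subseteq \lim_{i} \Mapo{S}{X_i}$ I would argue directly: given any factored map $\alpha \colon S \to [\Spec k/A] \to X$, post-composing with the projections $X \to X_i$ yields a compatible family of factored maps $\alpha_i \colon S \to [\Spec k/A] \to X_i$, through the \emph{same} $A$. Hence $\alpha$ lies in $\lim_{i} \Mapo{S}{X_i}$, and by clopenness the whole connected component of $\alpha$ lies there too, so that $\Mapo{S}{X} \subseteq \lim_{i} \Mapo{S}{X_i}$. The reverse inclusion is the main obstacle: one must show that if a map $\phi \colon S \to X$ has the property that each projection $\phi_i \colon S \to X_i$ lies in the connected component of some factored map $\alpha_i \colon S \to [\Spec k/A_i] \to X_i$, then $\phi$ itself lies in a connected component containing a map factored through some $[\Spec k/A]$. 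The plan here is to exploit that the classifying stacks $[\Spec k/A]$ for $A$ a finite product of groups of roots of unity are closed under finite products, so that compatible choices of $\alpha_i$ with varying $A_i$ can be promoted to a uniform factorization through $[\Spec k/\prod A_i]$, and then to invoke the universal property of the limit to assemble these uniform choices into a single factored map in the same connected component as $\phi$. The delicate step is tracking the compatibility of the homotopies in each $\Map{S}{X_i}$ linking $\phi_i$ to $\alpha_i$ across the diagram; this is where one must pay careful attention.

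The final statement about Čech nerves follows as a direct consequence of limit preservation, since each $\check{C}(f)_n = X \times_Y \cdots \times_Y X$ is a finite iterated pullback, and the equivalences $\Mapo{S}{\check{C}(f)_n} \simeq \check{C}(r_f)_n$ are compatible with the simplicial face and degeneracy maps by naturality of the equivalence $\Map{S}{-} \simeq \lim$ applied to each structure map of the nerve.
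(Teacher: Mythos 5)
Your proof does not follow the paper's route. The paper's proof is one line: it observes that the functor from pointed stacks to stacks sending $(F,x)$ to the connected component of $x$ preserves limits, applies this to the base points given by the maps $\alpha\colon S\to[\Spec k/A]\to X$ (which project compatibly to such maps into each $X_i$), and combines it with the fact that $\Map{S}{-}$ is a right adjoint. This cleanly disposes of the direction you call "the main obstacle," because the connected component of $\alpha$ in $\Map{S}{X}$ is then recognized as the limit of the connected components of the $\alpha_i$, and $\Mapo{S}{X}$ is by definition the union of such components.

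Your plan for the reverse inclusion has a genuine gap that your own suggested repair does not close. Given a point $\phi$ of $\lim_i \Mapo{S}{X_i}$, you know each $\phi_i$ shares a connected component with \emph{some} factored map $\alpha_i\colon S\to[\Spec k/A_i]\to X_i$, but the $\alpha_i$ are chosen separately in each $\Map{S}{X_i}$ and need not assemble to a cone over the diagram at all: there is no compatibility between $\alpha_i$ and $\alpha_j$ over $X_{ij}$, only between $\phi_i$ and $\phi_j$. Replacing the $A_i$ by $\prod_i A_i$ does nothing to fix this, since the obstruction is the failure of the $\alpha_i$ themselves to be compatible, not an incompatibility of the groups; moreover for an infinite diagram $\prod_i A_i$ is no longer of the required form. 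The actual content needed is precisely the statement the paper isolates, that the pointed connected-component functor preserves limits, which gives you, for any single factored $\alpha$ landing in $X$, the equivalence $\Map{S}{X}^{(\alpha)}\simeq\lim_i\Map{S}{X_i}^{(\alpha_i)}$; your argument never produces or invokes this, and so the reverse inclusion remains open. Your forward inclusion and the final Čech nerve remarks are fine.
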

\begin{proof}
The functor from pointed stacks to stacks sending a pair $(F, x)$ to the connected component of $x$ preserves limits. Thus the statement follows from the analogous statement for $\Map{S}{-}$, which is obvious. The second part of the claim is a formal consequence of the first one.
\end{proof}
 
\begin{lemma}
\label{lemma:openinclusion}
The functor  
$$\Mapo{S}{-}: \dStk \to \dStk$$
preserves both open and closed immersions of derived stacks. That is, if $i:Y \to X$ is an open (resp. closed) immersion of derived stacks, then 
$$r_i: \Mapo{S}{Y}\rightarrow\Mapo{S}{X}$$
is an open (resp. closed) immersion of derived stacks.
\end{lemma}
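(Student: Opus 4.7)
My plan is to split the proof into the common observation that $r_i$ is a monomorphism, and then parallel arguments for the two cases. The monomorphism property is immediate from Lemma~\ref{remark:Mapolimitpreserving}: since $i$ is an open or closed immersion it is a monomorphism, hence $Y\simeq Y\times_X Y$; limit preservation of $\Mapo{S}{-}$ then gives $\Mapo{S}{Y}\simeq\Mapo{S}{Y}\times_{\Mapo{S}{X}}\Mapo{S}{Y}$, showing $r_i$ is a monomorphism. Moreover, for any affine test $g\colon\Spec R\to\Mapo{S}{X}$, the mapping-stack adjunction identifies $g$ with a quasi-constant family $\tilde g\colon S\times\Spec R\to X$, and the pullback $\Spec R\times_{\Mapo{S}{X}}\Mapo{S}{Y}$ is then the subfunctor of $\Spec R$ whose $U$-points are those $\phi\colon U\to\Spec R$ for which $\tilde g\circ(\id_S\times\phi)$ factors through $Y$; the factorization, if it exists, is unique by the monomorphism property.

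For the closed case, Zariski-locally on $X$ one may present $i$ as $\Spec A/I\hookrightarrow\Spec A$; the factoring condition on $U$ then translates into the vanishing of the images of the generators of $I$ along $\tilde g$ inside $\mathcal{O}(S)\otimes_k R$, producing a closed derived subscheme of $\Spec R$ that glues to a global closed derived subscheme, since being a closed immersion is Zariski local on the target. For the open case, I would instead argue that $r_i$ is formally \'etale by a lifting argument: any nilpotent extension of affines $U_0\hookrightarrow U$ with a commutative square against $r_i$ translates, via the adjunction, into a lifting problem for $i$ against the still-nilpotent base change $S\times U_0\hookrightarrow S\times U$, uniquely soluble by formal \'etaleness of $i$. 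Combining formal \'etaleness, the monomorphism property, and the local finite presentation inherited from the geometricity of $\Map{S}{X}$ in the sense of Halpern-Leistner-Preygel~\cite{HLP} yields that $r_i$ is an open immersion.

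The main obstacle I anticipate is checking that the pullback is representable by a derived scheme, as required by the very definition of open or closed immersion for derived stacks. Here the restriction from the full mapping stack $\Map{S}{-}$ to its quasi-constant substack $\Mapo{S}{-}$ is essential: while $\Map{S}{-}$ need not preserve open immersions when $S$ fails to be proper (as simple examples with $S=\mathbb{A}^1$ show), the quasi-constancy condition imposes strong geometric control on fibers that, combined with the geometricity of $\Map{S}{X}$, keeps the pullback inside the class of derived schemes and lets one detect its open (resp.~closed) nature affine-locally on $\Spec R$.
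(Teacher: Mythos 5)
Your route is genuinely different from the paper's. The paper sets $Z := \Mapo{S}{Y}\times_{\Mapo{S}{X}}\Spec A$ and asserts directly, from the universal property of the mapping stack, that $Z\times S$ is the base change of $Y\to X$ along the adjoint map $\Spec A\times S\to X$; stability of open/closed immersions under base change, together with the fact that $Z\times S\to\Spec A\times S$ is the identity on the $S$-factor, then finishes the argument in one stroke (with the same argument covering both cases). You instead isolate the monomorphism property via Lemma~\ref{remark:Mapolimitpreserving} (a clean observation the paper leaves implicit) and treat the two cases separately, with a vanishing-locus argument for closed immersions and a deformation-theoretic argument (formal \'etaleness plus monomorphism plus local finite presentation) for open ones. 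The formal-\'etaleness computation via nilpotent thickenings and the adjunction is correct and is an attractive way to handle the infinitesimal part of the statement.

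Two things in your open case need repair. First, local finite presentation of $r_i$ is asserted rather than argued; geometricity of $\Map{S}{X}$ in the sense of Halpern-Leistner--Preygel is a statement about the stack, not about the relative map $r_i$, and showing that $r_i$ is lfp genuinely requires finiteness conditions on $S$. Second, and more seriously, your closing claim that the quasi-constancy condition keeps the pullback a derived scheme for non-proper $S$ is incorrect. Take $S=X=\bA^1$ and $Y=\bG_m$: since $X$ is a connected scheme, $\Mapo{S}{X}=\Map{S}{X}$, so quasi-constancy buys nothing, and over the $k[s]$-point of $\Mapo{S}{X}$ adjoint to $(s,t)\mapsto st-1$ the pullback of $r_i$ is exactly the formal completion of $\bA^1_s$ at the origin --- a formal scheme, neither open nor representable. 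What actually controls the open case is a properness or finiteness hypothesis on $S$ (the paper applies the lemma with $S=E$ and with $S=\mathrm{Aff}(E)\simeq\Spec k[\epsilon]$, both proper). You should state this hypothesis explicitly instead of appealing to quasi-constancy; for what it is worth, the paper's one-line Cartesian-square assertion needs it too, since without it the square $Z\times S\to Y\times_X(\Spec A\times S)$ need not be an equivalence.
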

\begin{proof} 
We prove the statement for open immersions, as the proof for closed immersions is the same. 
We need to show that for every affine scheme $\Spec A$ and for every map $$\Spec A\rightarrow\Mapo{S}{X}$$ the pullback map $\Mapo{S}{Y}\times_{\Mapo{S}{X}}\Spec A\rightarrow\Spec A$ is an open  immersion of derived schemes. 

Set $Z := \Mapo{S}{Y}\times_{\Mapo{S}{X}}\Spec A$. Then by the universal property of the mapping stack, we  obtain a pullback diagram 
$$
\xymatrix{
Z\times S \ar[d] \ar[r] & Y \ar[d] \\
\Spec A\times S \ar[r]  & X
}
$$
Since the map $Y\rightarrow X$ is an open immersion, the map $Z\times S \rightarrow \Spec A\times S$ must be an open  immersion. This map is the identity on $S$, so we conclude that $Z\rightarrow \Spec A$ is an open immersion; in particular $Z$ is necessarily a derived scheme. 
\end{proof}

\begin{proposition}[Point-wise criterion]\label{proposition:pointwisecriterion2}
Let $X$ be an $n$-geometric derived stack and let 
$$
\phi:\coprod_{\alpha\in I}U_\alpha\rightarrow X
$$ 
be a coproduct of open immersions of derived stacks. Assume that for every field extension $K$ of $k$ and any map $f:\Spec K\rightarrow X$ there exists a lift 
$$
\xymatrix{
& \coprod_{\alpha\in I} U_\alpha \ar[d]^-\phi \\
\Spec K\ar[r]^{f}\ar@{-->}[ur] & X
}
$$
Then the map $\phi$ is an effective epimorphism.
\end{proposition}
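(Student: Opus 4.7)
The plan is to verify the concrete characterization of effective epimorphism recalled just after the definition in Section~\ref{section:preliminaries}: it suffices to show that for every test morphism $g : \Spec S \to X$ there is an \'etale (in fact Zariski) cover $\{\Spec S_i \to \Spec S\}$ along which $g$ lifts through $\phi$. So I would fix such a map $g$ and produce the cover by base change.

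First I would form the pullback
\[
Y \;:=\; \Big(\coprod_\alpha U_\alpha\Big) \times_X \Spec S \;\simeq\; \coprod_\alpha \big( U_\alpha \times_X \Spec S \big).
\]
Since each $U_\alpha \to X$ is an open immersion of derived stacks and $\Spec S$ is a derived affine scheme, each pullback $U_\alpha \times_X \Spec S \to \Spec S$ is an open immersion of derived schemes with affine target (this is literally the definition of an open immersion of derived stacks given in Section~\ref{section:preliminaries}). In particular the projection $\pi : Y \to \Spec S$ presents $\Spec S$ with a family of open subschemes indexed by $\alpha$; let $V \subseteq |\Spec S|$ denote the union of the underlying topological images of these open immersions.

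The key step is to prove $V = |\Spec S|$. For every closed point $x \in |\Spec S|$ with residue field $K$, the composition
\[
\Spec K \xrightarrow{\,x\,} \Spec S \xrightarrow{\,g\,} X
\]
is a $K$-valued closed point of $X$, so by hypothesis it lifts through $\phi$. By the universal property of the fiber product this lift yields a factorization $\Spec K \to U_{\alpha} \times_X \Spec S \hookrightarrow \Spec S$ for some $\alpha$, showing $x \in V$. Because $|\Spec S| \cong |\Spec \pi_0 S|$ is a spectral space, hence quasi-compact and sober, every non-empty closed subset contains a closed point; the complement of $V$ is therefore empty. From the equality $V = |\Spec S|$ I would then refine $\{U_\alpha \times_X \Spec S\}_\alpha$ to a Zariski cover of $\Spec S$ by principal open affines $\{\Spec S_i\}$, each landing in some $U_{\alpha(i)} \times_X \Spec S$. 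The induced morphism $\Spec S_i \to U_{\alpha(i)} \hookrightarrow \coprod_\alpha U_\alpha$ is then the desired lift of $g|_{\Spec S_i}$, and since Zariski covers are in particular \'etale covers, the pointwise criterion is verified.

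The main obstacle, such as it is, is the topological step: ``closed points detect surjectivity'' for open subsets of $|\Spec S|$. In the underived setting this is the standard fact that a spectral space has enough closed points; in the derived case one simply passes to the underlying classical truncation $|t_0 \Spec S| = |\Spec \pi_0 S|$, since a derived scheme and its truncation share the same topological space. The $n$-geometricity hypothesis on $X$ plays no essential role in the argument beyond ensuring that affine test objects $\Spec S$ suffice to detect effective epimorphisms and that the pullback $Y$ is well-behaved.
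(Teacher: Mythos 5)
Your proof is correct, and it runs along essentially the same lines as the paper's: pull back the family of open immersions to a test affine, check liftability on closed points, observe that a spectral space has enough closed points so the opens cover, and refine to a Zariski (hence \'etale) cover. The one structural difference is that the paper first routes through the atlas of $X$ to reduce to the case $X = \Spec A$ affine and then phrases the conclusion in terms of Zariski covers of simplicial rings, whereas you work directly with an arbitrary test morphism $\Spec S \to X$, invoking the lifting criterion ("Property") from Section~\ref{section:preliminaries} without needing the atlas reduction; this is a mild streamlining, and your remark that $n$-geometricity is not really used beyond guaranteeing that affine tests suffice is a fair observation. One small caveat, present in the paper's argument as well: when you say the composite $\Spec K \to \Spec S \to X$ is a ``closed point of $X$,'' this is not literally true for an arbitrary map $\Spec S \to X$ (a closed point of $\Spec S$ need not map to a closed point of $X$); the hypothesis must implicitly be read as quantifying over all field-valued points of $X$, which is how the proposition is actually used elsewhere in the paper, so this is an inherited imprecision rather than a gap you introduced.
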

\begin{proof} 
Since $X$ is $n$-geometric, the map $f$ factors through one of the affine schemes $$\Spec A\rightarrow X$$ which compose the chosen atlas of $X$ (up to trading $K$ for a field extension). Open immersions of derived stacks are stable under base change, and thus the base change of $\phi$ along $\Spec A\rightarrow X$
$$
\Big ( \coprod_{\alpha\in I} U_\alpha \Big ) \times_X \Spec A \simeq  \coprod_{\alpha\in I} \Big (  U_\alpha  \times_X \Spec A  \Big ) \to \Spec A  
$$
is also a coproduct of open immersions. The equivalence above is a consequence of the universality of colimits in $\infty$-topoi.
Each summand $V_\alpha:=U_\alpha  \times_X \Spec A$ is an open substack of $\Spec A$, and is therefore a derived scheme. 
Thus we can reduce to proving the claim  when $X = \Spec A$  is an affine derived scheme and 
$$
\coprod_{\alpha\in I} V_\alpha \to X = \Spec A
$$
is a disjoint union of open subschemes. Up to refining the cover $\{V_\alpha\}_{\alpha \in I}$, by taking affine open covers of each scheme $V_\alpha$, we can also assume that the $V_\alpha$-s are affine. Set $V_\alpha=\Spec A_\alpha$.

The existence of lifts in the affine situation is equivalent to the statement that the collection of maps of simplicial $k$-algebras $$\{A\rightarrow A_\alpha\}_{\alpha\in I}$$ is a Zariski cover of the simplicial $k$-algebra $A$, i.e.
\begin{itemize}
\item all the  $k$-algebras $A_\alpha$ are localizations of the algebra $A$ at some elements $a_\alpha\in \pi_0 A$;
\item the collection of the elements $\{a_\alpha\}_{\alpha\in I}$ generates $\pi_0 A$.
\end{itemize} 
 
But this implies that the map $\coprod_{\alpha\in I} \Spec A_\alpha\rightarrow\Spec A$ is an effective epimorphism.
\end{proof}
\subsection{Quasi-constant maps to varieties}
\label{subsection:codescentnoaction}
Let $X$ be a variety and $E$ an elliptic curve over $k$. 
In this section we prove the following statement:
\begin{proposition}
\label{affinization}
The morphism 
$$
\Mapo{\mathrm{Aff}(E)}{X} \to \Mapo{E}{X}
$$
induced by the unit $E \to \mathrm{Aff}(E)$ of the affinization is an equivalence of derived stacks.
\end{proposition}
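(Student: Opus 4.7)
The plan is to reduce to the case where the target $X$ is an affine variety and then conclude via Zariski codescent. For the base case, when $X=\Spec R$ with $R$ a discrete commutative $k$-algebra, $X$ is itself an affine stack in To\"en's sense, so the adjunction $\cO\dashv\Spec^{\Delta}$ yields directly $\Map{\Aff{E}}{X}\simeq\Map{E}{X}$. By Proposition \ref{example:MapoVar}, passing to connected components of constant maps on both sides gives the required equivalence $\Mapo{\Aff{E}}{X}\simeq\Mapo{E}{X}$ when $X$ is affine.

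For a general variety $X$, I would pick an affine open cover $\{U_i\}_{i\in I}$; by separatedness of $X$, all finite intersections $U_{i_0\cdots i_n}$ are again affine. On the left hand side, formality of $\cO(E)$ over a characteristic-zero field gives $\Aff{E}\simeq\Aff{S^1}$, whence $\Map{\Aff{E}}{X}\simeq\cL^{u}X\simeq T_X[-1]$ for $X$ a scheme, which satisfies Zariski codescent on $X$ by the Ben-Zvi--Nadler cosheaf property recalled in Section \ref{subsection:tangents,loops,maps}. On the right hand side, Lemma \ref{lemma:openinclusion} gives open immersions $\Mapo{E}{U_i}\to\Mapo{E}{X}$, and the crucial step is to prove that $\coprod_i\Mapo{E}{U_i}\to\Mapo{E}{X}$ is an effective epimorphism.

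This is the main obstacle, and it is where the point-wise criterion (Proposition \ref{proposition:pointwisecriterion2}) is applied: a $K$-point of $\Mapo{E}{X}$ is a map $f:E_K\to X$ lying in the connected component of a constant map, and one must exhibit a factorization through some $U_i$. The expected argument is that such an $f$ is in fact constant. Indeed, for ample line bundles $L$ on $X$ the degrees $\deg(f^{*}L)$ are deformation-invariant and vanish on constants, so any $f$ in the connected component of a constant has $\deg(f^{*}L)=0$ for all ample $L$; this forces $f(E_K)$ to be zero-dimensional, hence a point. A constant map then automatically factors through any $U_i$ containing its value. (For targets not globally quasi-projective this reduction can be carried out locally.)

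Finally, Lemma \ref{remark:Mapolimitpreserving} ensures that $\Mapo{E}{-}$ preserves limits, so the $\check{C}$ech nerve of the effective epimorphism is identified with $\Mapo{E}{-}$ applied termwise to the $\check{C}$ech nerve of $\coprod_i U_i\to X$, whose terms are coproducts of the affine intersections $U_{i_0\cdots i_n}$. On each such affine piece the base case provides the required equivalence, and combining with the Ben-Zvi--Nadler codescent on the left hand side assembles these into the desired global equivalence $\Mapo{\Aff{E}}{X}\simeq\Mapo{E}{X}$.
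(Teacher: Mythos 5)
Your proposal is structured differently from the paper's: rather than first showing that the comparison map $\Mapo{\mathrm{Aff}(E)}{X}\to\Mapo{E}{X}$ is an open immersion and then lifting $K$-points of $\Mapo{E}{X}$, you aim to establish Zariski codescent for $\Mapo{E}{-}$ directly and then combine it with the Ben-Zvi--Nadler codescent for unipotent loops and the affine base case. That route is perfectly reasonable in spirit (indeed the paper records the resulting codescent as Corollary \ref{corollary:LoopstackequalsMapo}), and your degree argument is essentially the one the paper uses in Lemma \ref{Pn}. But there is a genuine gap exactly where you put a parenthesis.

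The degree argument proves that a $K$-point $f\colon E_K\to X$ in the connected component of constants is constant \emph{only when $X$ carries an ample line bundle}, i.e.\ when $X$ is quasi-projective. In this paper a ``variety'' is just an integral separated finite-type $k$-scheme, so $X$ may have no ample (or even no nontrivial) line bundles at all --- for instance a Hironaka-type proper non-projective threefold contains curves $C$ with $L\cdot C=0$ for every $L\in\mathrm{Pic}(X)$, so degree considerations against bundles on $X$ cannot rule out a non-constant $f$ with image such a $C$. Your remark that ``for targets not globally quasi-projective this reduction can be carried out locally'' is circular as stated: to argue on a quasi-projective open $U\subset X$ you must already know that $f$ factors through $U$, which is precisely what the effective-epimorphism claim is supposed to produce. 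This is exactly the obstruction the paper is working around with Lemma \ref{extension} and Lemma \ref{Pn}: one first restricts $f$ to the preimage $S$ of a single affine chart, pushes this restriction forward via the valuative criterion of properness to a map $h\colon E_K\to\bP^n$, and only then runs the degree/ampleness argument against $\cO_{\bP^n}(1)$, where ampleness is available; a final codescent argument for the unipotent loop space brings the factorization back into the original chart. Without some substitute for that extension step your argument only proves the proposition for quasi-projective $X$. (A smaller point: to invoke Proposition \ref{proposition:pointwisecriterion2} you also need to note that $\Mapo{E}{X}$ is geometric, which the paper gets from the Halpern-Leistner--Preygel theorem; you do not address this.)
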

The codescent property for $\Mapo{E}{X}$ will follow immediately from Proposition \ref{affinization}. 

\begin{lemma}
\label{extension}
Let  $S \subset E$ be an affine open subset. Let $U$ be an affine variety, and fix a locally closed embedding $U \subset \mathbb{P}^n$. Denote by $\overline{U}$ the closure of $U$. Then there are natural monomorphisms of stacks
$$
\Map{S}{U} \stackrel{\alpha}\longrightarrow \Map{E}{\overline{U}} \stackrel{\beta}\longrightarrow \Map{E}{\bP^n}
$$
That is, for every affine scheme $Y$ the induced  maps of sets
$$
\pi_0 \mathrm{Map}(S\times Y,U)   \stackrel{\pi_0\alpha} \to \pi_0 \mathrm{Map}(E\times Y, \overline{U})  \stackrel{\pi_0\beta}  \to \pi_0\mathrm{Map}(E\times Y, \bP^n)
$$
are injective. Restricting to quasi-constant maps yields monomorphisms
$$
\Mapo{S}{U} \stackrel{\alpha}\longrightarrow \Mapo{E}{\overline{U}}  \stackrel{\beta}\longrightarrow \Mapo{E}{\bP^n}
$$
\end{lemma}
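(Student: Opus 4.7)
The plan is to construct the two maps $\alpha$ and $\beta$ explicitly as morphisms of derived stacks, verify each is a monomorphism at the level of $\pi_0$ of $Y$-points for every derived affine $Y$ (which suffices since the target mapping stacks are $0$-truncated), and finally observe that both maps preserve quasi-constant substacks so the monomorphism property is inherited by the restrictions.

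For $\beta$, the closed immersion $j\colon\overline{U}\hookrightarrow\bP^n$ is in particular a monomorphism of derived stacks. Since $\Map{E}{-}\colon\dStk\to\dStk$ is right adjoint to $(-)\times E$, it preserves all limits, and in particular it sends monomorphisms (which are characterized by a limit condition on the diagonal) to monomorphisms. Postcomposition with $j$ therefore defines a monomorphism $\beta=j_{\ast}\colon\Map{E}{\overline{U}}\to\Map{E}{\bP^n}$.

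For $\alpha$, we construct the extension. Given a derived affine $Y$ and $f\colon S\times Y\to U$, postcompose with $U\hookrightarrow\overline{U}\hookrightarrow\bP^n$ to obtain $\phi\colon S\times Y\to\bP^n$. Since $E$ is a smooth projective curve, the complement $D:=E\setminus S$ is a finite set of closed smooth points of $E$, hence a Cartier divisor. Properness of $\bP^n$ combined with the valuative criterion applied to the relative scheme $E\times Y\to Y$ yields a unique extension $\tilde\phi\colon E\times Y\to\bP^n$; functorially over $Y$ this can be organized using the representability of $\Map{E}{\bP^n}$ as an algebraic stack guaranteed by Halpern--Leistner--Preygel \cite{HLP}. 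The preimage $\tilde\phi^{-1}(\overline{U})$ is a closed subscheme of $E\times Y$ containing the schematically dense open $S\times Y$, so it coincides with $E\times Y$; thus $\tilde\phi$ factors through $\overline{U}$ and we set $\alpha(f):=\tilde f\colon E\times Y\to\overline{U}$. Injectivity of $\alpha$ on $\pi_0$ is then immediate: restriction to $S\times Y$ recovers $f$ (composed with the monomorphism $U\hookrightarrow\overline{U}$), so $\tilde f_1=\tilde f_2$ forces $f_1=f_2$.

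Finally, both $\alpha$ and $\beta$ send constant maps to constant maps (an extension of a constant is constant, and postcomposition with a closed immersion preserves constancy), and being morphisms of stacks they preserve connected components. Hence the restrictions to the quasi-constant substacks $\Mapo{S}{U}\to\Mapo{E}{\overline{U}}\to\Mapo{E}{\bP^n}$ are well-defined and remain monomorphisms, since passage to a subfunctor preserves monomorphisms. The main obstacle is the derived construction of $\alpha$: while for classical $Y$ the valuative criterion applied to $\bP^n\to\Spec k$ yields the extension immediately, upgrading to a functorial construction over an arbitrary derived affine base requires invoking the representability of $\Map{E}{\bP^n}$ together with a deformation-theoretic lift from the underlying classical truncation of $Y$ to $Y$ itself.
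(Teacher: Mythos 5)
Your proof follows essentially the same strategy as the paper: construct $\beta$ from the closed immersion $\overline{U}\subset\bP^n$, construct $\alpha$ by extending across $E\smallsetminus S$ using properness and the valuative criterion, check injectivity at the level of $\pi_0$ (both sides being $0$-truncated), and deduce the quasi-constant statement from the fact that $\alpha,\beta$ preserve constants and hence connected components (implicitly invoking Proposition \ref{example:MapoVar} that for a target scheme the quasi-constant locus is exactly the union of those components). Two minor differences are worth pointing out. First, your argument for $\beta$ — $\Map{E}{-}$ is a right adjoint, hence preserves limits, hence preserves $(-1)$-truncated morphisms — is cleaner and more explicit than the paper, which only asserts the property. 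Second, for $\alpha$ you extend into $\bP^n$ first and then argue the image factors through $\overline{U}$ via a density/closedness argument, whereas the paper applies the relative valuative criterion with target $\overline{U}\times Y$ directly (proper over $Y$), which avoids the factorization step entirely and the accompanying appeal to schematic density; your version is not wrong, but that step is what costs you the extra work. You rightly flag the derived-base extension as the delicate point of the whole construction — the paper addresses this by citing a derived form of the valuative criterion in a footnote, and in practice the lemma is applied in the proof of Proposition \ref{affinization} only on geometric points, where the issue does not arise.
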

\begin{remark}
The notion of monomorphism we refer to in Lemma \ref{extension} is the notion of monomorphism in an $\infty$-category appearing in \cite[p.~575]{HTT}.
\end{remark}

\begin{proof}[Proof of Lemma \ref{extension}]
The inclusion  $\overline{U} \subset \bP^n$ determines a map 
$\Map{E}{\overline{U}} \stackrel{\beta}\longrightarrow \Map{E}{\bP^n}$ which has the desired properties. Let us define the map $\alpha$. Let $T$ be a  proper and separated derived scheme. As $E$ is a discrete one-dimensional scheme, the valuative criterion of properness\footnote{For a reference on the valuative criterion of properness in the derived setting, see for instance {\tt https://www.preschema.com/lecture-notes/kdescent/lect6.pdf}.} implies that there is an equivalence $
\mathrm{Map}(E, T) \stackrel{\simeq}\to \mathrm{Map}(S, T)
$.  Defining $\alpha$ requires defining maps
$$
\alpha_Y: \mathrm{Map}(S \times Y, U) \to \mathrm{Map}(E \times Y, \overline{U})
$$
for every $Y \in \mathrm{dAff}$, that are natural in $Y$. We define $\alpha_Y$ as the composition 
 $$
\mathrm{Map}(S \times Y, U) \stackrel{(a)}\to \mathrm{Map}_{\mathrm{dSt}/Y}(S \times Y, U \times Y) \stackrel{(b)}\to 
\mathrm{Map}_{\mathrm{dSt}/Y}(E \times Y, \overline{U} \times Y) \stackrel{(c)}\to \mathrm{Map} (E \times Y, \overline{U})
$$
where  
\begin{itemize}
\item  $\mathrm{Map}_{\mathrm{dSt}/Y}(-,-)$ denotes the mapping space in the over-category $\mathrm{dSt}/Y$ 
\item on connected components, the map (a) is the assignment
$$(S \times Y \stackrel{f} \to U) \mapsto  (S \times Y \stackrel{f \times \mathrm{pr}_Y}\to U \times Y)$$
\item the map (b) is the morphism on mapping spaces given by the valuative criterion of properness, relative to the base scheme $Y$, as $E \times Y \to Y$ is a discrete curve over $Y$, and $\overline{U} \times Y \to Y$ is proper
\item the map (c) is induced by the projection $\overline{U} \times Y \to \overline{U}$
\end{itemize}
The fact that $\alpha$ is natural in $Y$ is clear. Also, it is easy to see that $\alpha$ induces section-wise injections on connected components. As for the last statement, it follows from the fact that $\alpha$ and $\beta$ preserve constant maps. Indeed, as the image of a connected stack under any map is connected, the maps $\alpha$ and $\beta$ restrict to maps between the connected components of the constant maps, and by Proposition \ref{example:MapoVar} the stack of quasi-constant maps with target a variety is the connected component of the constants.
\end{proof}

\begin{remark}
Let us make some more comments on the map $\alpha$ defined in Lemma \ref{extension}. 
The map $\alpha$ can be factored as 
$$
\xymatrix{
\Map{S}{\overline{U}}  \ar[dr]^-\simeq & &  \Mapo{S}{\overline{U}} \ar[dr]^-\simeq & \\
\Map{S}{U} \ar[r]^-\alpha \ar[u] & \Map{E}{\overline{U}}  & \Mapo{S}{U}  \ar[r]^-\alpha \ar[u]& \Mapo{E}{\overline{U}}
}
$$
Let us focus on then diagram on the left, as the one on the right is just obtained by restricting to quasi-constant maps. The vertical arrow is induced by the inclusion $U \to \overline{U}$, and thus is an open embedding as explained in Lemma \ref{lemma:openinclusion}. The diagonal arrow is an equivalence. It is the inverse of the natural map $\Map{E}{\overline{U}} \to \Map{S}{\overline{U}}$ given by restriction to $S$. As  explained in the proof of Lemma \ref{extension}, the fact that this map is an equivalence follows from the valuative criterion for properness.
\end{remark}

\begin{lemma}
\label{affopen}
Let $\mathrm{Aff}(E)$ be the affinization of $E$. Then the map
$$
\Mapo{\mathrm{Aff}(E)}{X} \to \Mapo{E}{X} 
$$
is an open embedding.
\end{lemma}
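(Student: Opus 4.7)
My approach is to reduce to the case of an affine target, where the map is an equivalence, and then globalize using an affine open cover of $X$.

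\textbf{Affine case.} If $U=\Spec R$ is affine, then on any test derived affine $\Spec A$ both $\Map{\Aff{E}}{U}(\Spec A)$ and $\Map{E}{U}(\Spec A)$ compute the space of simplicial commutative ring maps from $R$ into the derived global functions $\cO(Z\times \Spec A)$, with $Z=\Aff{E}$ or $E$. The defining adjunction of the affinization gives $\cO(\Aff{E})\simeq\cO(E)$, and the K\"unneth formula, applicable since $\cO(E)\simeq k[\epsilon]$ is $k$-flat in characteristic zero, identifies both target rings with $k[\epsilon]\otimes_k A$. Thus $\Map{\Aff{E}}{U}\to\Map{E}{U}$ is an equivalence, and restricting to quasi-constant components gives $\Mapo{\Aff{E}}{U}\simeq\Mapo{E}{U}$.

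\textbf{General case.} For a general variety $X$, I would verify the open immersion property by base change along a test map $\Spec A\to\Mapo{E}{X}$, corresponding to a quasi-constant morphism $\phi:E\times\Spec A\to X$. The goal is to show that $P:=\Mapo{\Aff{E}}{X}\times_{\Mapo{E}{X}}\Spec A\to \Spec A$ is an open immersion of derived schemes. Fix an affine open cover $\{U_i\}_{i\in I}$ of $X$. Since $\phi$ is quasi-constant and $X$ is a variety, Proposition \ref{example:MapoVar} ensures that each classical fibre of $\phi$ is constant at a point of $X$, producing a classical map $\Spec\pi_0(A)\to X$. Pulling back $\{U_i\}$ along this classical map yields an open cover $\{W_i\}$ of $\Spec A$ on which $\phi|_{E\times W_i}$ factors through the affine $U_i$. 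The affine case then supplies a canonical extension $\Aff{E}\times W_i\to U_i\subset X$, producing morphisms $W_i\to P$ over $\Spec A$.

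\textbf{Gluing and conclusion.} I would identify $P$ with the open subscheme $\bigcup_i W_i\subset \Spec A$. On overlaps $W_i\cap W_j$ both candidate extensions land in $U_i\cap U_j$; covering this intersection by affines and invoking the uniqueness from the affine case forces the two candidates to agree, so the $W_i\to P$ glue into a morphism $\bigcup_i W_i\to P$. Conversely, any $S$-point of $P$ provides an extension $\Aff{E}\times S\to X$ whose classical image locally on $S$ lies in some $U_i$, so $S\to \Spec A$ factors locally through $W_i$, canonically by the same uniqueness. This shows $P\simeq\bigcup_i W_i$ as open subschemes of $\Spec A$, yielding the desired open immersion. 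The main technical obstacle will be the uniqueness and gluing of derived extensions of $E$-maps to $\Aff{E}$-maps, i.e.\ ensuring that the rigidity from the affine case propagates coherently to the derived structure so that $P$ carries no derived information beyond the open subscheme $\bigcup_i W_i$.
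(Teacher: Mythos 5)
The affine case you sketch is fine and agrees in spirit with what the paper uses: maps out of $\Aff{E}$ and out of $E$ to an affine derived scheme agree by formality of $\cO(E)$ together with flat base change. The problem is in the globalization step.

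You claim that "Since $\phi$ is quasi-constant and $X$ is a variety, Proposition \ref{example:MapoVar} ensures that each classical fibre of $\phi$ is constant at a point of $X$, producing a classical map $\Spec\pi_0(A)\to X$." This is not what Proposition \ref{example:MapoVar} says. That proposition identifies $\Mapo{E}{X}$ with the union of the connected components of $\Map{E}{X}$ containing the constant maps; it makes no claim about the individual maps populating those components. A priori a connected component of a constant map could contain maps whose fibres $\phi|_{E\times\{a\}}$ are non-constant, and indeed the fact that quasi-constant $E$-maps to $X$ are pointwise constant is precisely the nontrivial content of Lemma \ref{Pn} and of Proposition \ref{affinization}. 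Those results in turn invoke Lemma \ref{affopen} (via the pointwise criterion of Proposition \ref{proposition:pointwisecriterion2}), so using them here is circular. Without the constancy input you cannot produce the classical map $\Spec\pi_0(A)\to X$, nor the cover $\{W_i\}$ on which $\phi$ is promised to factor through an affine $U_i$; the rest of your gluing argument depends on this. The "main technical obstacle" you flag at the end (uniqueness of derived extensions) is actually the easy part — it is handled by the universal property of affinization — whereas the real obstacle is the constancy you assumed implicitly.

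For comparison, the paper's argument sidesteps the constancy issue entirely. It observes that $\Map{\Aff{E}}{X}\simeq\cL X$ is connected (so it already equals $\Mapo{\Aff{E}}{X}$) and that $\cL X$ satisfies Zariski codescent on $X$, which is a pre-existing result from Ben-Zvi and Nadler. Covering $X$ by affines $U_i$, each $\Mapo{\Aff{E}}{U_i}\simeq\Mapo{E}{U_i}$ maps to $\Mapo{E}{X}$ by an open immersion (Lemma \ref{lemma:openinclusion}), and codescent exhibits $\Mapo{\Aff{E}}{X}$ as the geometric realization of the \v{C}ech nerve of these open pieces; the realization is then an open substack of $\Mapo{E}{X}$. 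The key move — using loop-space codescent rather than trying to build the open locus by hand on a test affine — is what you would need to make your strategy work without circularity.
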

\begin{proof}
Recall from Section \ref{section:preliminaries}, that there is an equivalence 
$$
\mathrm{Aff}(E) \simeq \mathrm{Aff}(S^1)
$$
This implies  that $\Map{\mathrm{Aff}(E)}{X}$ is equivalent to the derived loop space $\cL X$; in particular, the stack $\Map{\mathrm{Aff}(E)}{X}$ is connected, and thus there is an identification
$$\Map{\mathrm{Aff}(E)}{X} = \Mapo{\mathrm{Aff}(E)}{X}$$  
Now this also implies that $\Mapo{\mathrm{Aff}(E)}{X}$ satisfies Zariski codescent on $X$. Indeed, this is easily proved for the loop space  $\cL X$; a reference  is, for instance, Lemma 4.2 of \cite{BZNLoopConn}. 

Consider an affine open cover $\{U_i\}_{i \in I}$ of $X$. By Lemma \ref{lemma:openinclusion}, for every $i \in I$ the vertical arrows in the commutative diagram below are open inclusions 
$$
\xymatrix{
\Mapo{\mathrm{Aff}(E)}{X} \ar[r] & \Mapo{E}{X} \\
\Mapo{\mathrm{Aff}(E)}{U_i} \ar[ur]^-\subset \ar[r]^-\simeq \ar[u] & \Mapo{E}{U_i} \ar[u]
}
$$
Further, the bottom arrow is an equivalence, by the universal property of the affinization. It follows that we have an open embedding 
$\Mapo{\mathrm{Aff}(E)}{U_i} \to \Mapo{E}{X}$. Thus the map from the realization of the \v{C}ech nerve of the open substacks $\Mapo{\mathrm{Aff}(E)}{U_i}$  
$$
\Mapo{\mathrm{Aff}(E)}{X} \simeq |\Mapo{\mathrm{Aff}(E)}{U_i} | \longrightarrow \Mapo{E}{X} 
$$
is also an open embedding.
\end{proof}
 
 Next we show Proposition \ref{affinization} in the case when $X$ is the projective space, as a stepping stone to the proof in the general case.
 \begin{lemma}
 \label{Pn}
 The morphism 
$$
\Mapo{\mathrm{Aff}(E)}{\bP^n} \to \Mapo{E}{\bP^n}
$$
 induced by the unit map $E \to \mathrm{Aff}(E)$, is an equivalence of derived stacks.
 \end{lemma}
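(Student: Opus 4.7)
My plan is to combine Lemma \ref{affopen}, which already establishes that the morphism $\phi: \Mapo{\mathrm{Aff}(E)}{\bP^n} \to \Mapo{E}{\bP^n}$ is an open immersion (in particular a monomorphism in $\dStk$), with the point-wise criterion of Proposition \ref{proposition:pointwisecriterion2}, which will upgrade $\phi$ to an effective epimorphism; together these two statements imply that $\phi$ is an equivalence. Applying the point-wise criterion requires two inputs: that the target be geometric, which follows from the Halpern-Leistner--Preygel theorem cited above since $E$ is smooth and $\bP^n$ is a smooth proper scheme; and that every $K$-point of $\Mapo{E}{\bP^n}$ admit a lift to $\Mapo{\mathrm{Aff}(E)}{\bP^n}$. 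The remainder of the proof is devoted to exhibiting such a lift.

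The heart of the argument is the classification of closed points of $\Mapo{E}{\bP^n}$. By Proposition \ref{example:MapoVar}, a $K$-point is a morphism $f: E_K \to \bP^n_K$ lying in the connected component of the constant maps, and I claim that any such $f$ must itself be constant. Morphisms $E_K \to \bP^n_K$ are classified by pairs $(L, \{s_0,\ldots,s_n\})$, where $L$ is a line bundle on $E_K$ and the $s_i$ are globally generating sections; the degree of $L$ is locally constant on the mapping stack, and constant maps correspond to the trivial bundle with constant sections and hence have degree zero. Conversely, any degree zero line bundle on $E_K$ carrying a non-zero section is trivial (the associated effective divisor has degree zero, hence is empty), in which case the sections themselves are constants and the map is constant. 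Thus $f$ factors as $E_K \to \Spec K \xrightarrow{f_0} \bP^n$ for some $K$-point $f_0$ of $\bP^n$.

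Given this, the required lift is produced by the composition $g: \mathrm{Aff}(E) \times \Spec K \to \Spec K \xrightarrow{f_0} \bP^n$, where the first arrow is the projection induced by the structure map $\mathrm{Aff}(E) \to \Spec k$. Since the unit $E \to \mathrm{Aff}(E)$ is compatible with the projections to $\Spec k$, precomposing $g$ with $E \times \Spec K \to \mathrm{Aff}(E) \times \Spec K$ recovers $f$, so $g$ is a genuine lift. Finally, $\Map{\mathrm{Aff}(E)}{\bP^n} \simeq \cL \bP^n$ is connected because $\bP^n$ is, so $\Mapo{\mathrm{Aff}(E)}{\bP^n} = \Map{\mathrm{Aff}(E)}{\bP^n}$ and $g$ automatically defines a point of the quasi-constant locus. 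The main (albeit minor) obstacle is the classification step identifying the connected component of constant maps with the locus of degree zero morphisms; everything else is a formal consequence of the setup and of the universal property of affinization.
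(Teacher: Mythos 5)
Your proposal is correct and follows essentially the same strategy as the paper: combine the open immersion from Lemma \ref{affopen} with the pointwise criterion of Proposition \ref{proposition:pointwisecriterion2}, and verify the lifting condition by showing that a quasi-constant $K$-point $f: E_K \to \bP^n$ must have $f^*\cO(1)$ of degree zero, hence trivial, hence constant sections. The only cosmetic difference is at the end: you produce the lift directly by observing that $f$ factors through $\Spec K$ and then precomposing with the structure map $\mathrm{Aff}(E)\times \Spec K \to \Spec K$, whereas the paper argues that $f$ factors through an affine open $U \subset \bP^n$ and then invokes the equivalence $\Mapo{\mathrm{Aff}(E)}{U} \simeq \Mapo{E}{U}$ from the universal property of affinization; both are valid and yield the same lift.
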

 \begin{proof}
Let $\phi_{\cO(1)}: \mathbb{P}^n \to [\Spec k/\bG_m]$ be the classifying map of the bundle $\cO(1)$ over $\mathbb{P}^n$. By evaluating the composition 
  $$
  \Mapo{E}{\bP^n} \times \Map{\bP^n}{[\Spec k/\bG_m]} \longrightarrow \Map{E}{[\Spec k/\bG_m]} = \underline{\mathrm{Pic}}(E)
  $$
  at the point $\phi_{\cO(1)}$ in the second factor, we obtain a map of stacks 
$$
\Mapo{E}{\bP^n} \to \underline{\mathrm{Pic}}(E)
$$
As connected stacks map to connected stacks, this map must factor through the inclusion $$\underline{\mathrm{Pic}}^0(E) \subset \underline{\mathrm{Pic}}(E)$$This implies that for every map $f: E_{K} \to \bP^n$ parametrized by a point of 
$
\Mapo{E}{\bP^n}$, the bundle $f^*\cO(1)$ has degree 0. Further $f^*\cO(1)$ must have non-trivial global sections (as it is the pullback of a very ample bundle). These two properties imply that $f^*\cO(1) \simeq \cO_E$. 

By Lemma \ref{affopen}, the statement we need to prove can be checked via Proposition \ref{proposition:pointwisecriterion2}. That is, we need to show that every map $f:E_{K} \to \mathbb{P}^n$ factors through some affine open subset $U \subset \mathbb{P}^n$. To show this, it is enough to check the set-theoretic condition that $f$ induces a constant map between the set of geometric points of $E$ and the set of geometric points of $\mathbb{P}^n$. Then it will be enough to choose as $U$ an affine open neighbourhood of $f(p)$, where $p \in E$ is any geometric point. The fact that $f$ is constant on geometric points follows immediately from the  fact that  $f^*\cO(1) \simeq \cO_E$. Indeed, the sections of  $\cO_E$ are constants, and therefore do not distinguish points. 
\end{proof}

 \begin{proof}[Proof of Proposition \ref{affinization}]
As in the case of $\bP^n$, we start by observing that by Lemma \ref{affopen} the natural map
 $$
\Mapo{\mathrm{Aff}(E)}{X} \to \Mapo{E}{X}
$$
 is an open embedding. Thus to prove that it is an equivalence we can use the point-wise criterion. Consider a map 
 $ \, 
 f: E_K \to X
 \, $ 
corresponding to a closed point $\Spec K\to\Mapo{E}{X}$. We need to show that $f$ factors as 
$$
\xymatrix{\mathrm{Aff}(E)_K \ar[dr]^-{g} & \\
E_K \ar[u] \ar[r]^-f & X }
$$
for some $g: \mathrm{Aff}(E)_K \to X$. Let $U \subset X$ be an affine open subset such that 
$S=E_K \times_U X$ is non-empty. Note $S \subset E_K$ is an affine open subset. Now fix a locally closed embedding $U \subset \mathbb{P}^n$, and let $\overline{U}$ be the closure of $U$. As $\mathbb{P}^n$ is proper there is a unique map $h: E_K \to \mathbb{P}^n$ which makes the following diagram commute
\begin{equation}
\label{diacom}
\begin{gathered}
\xymatrix{
U \ar[r] & \mathbb{P}^n \\
S \ar[r] \ar[u]^-{f|_S} & E_K \ar[u]_-h
}
\end{gathered}
\end{equation}
In fact more is true, namely $S$ is the truncation of the fiber product: 
$ \, 
S \cong t_0(U \times_{\bP^n} E_K)
$. 
By Lemma \ref{extension}, the map $h$ is parametrized by a closed point in 
$\Mapo{E}{\bP^n}$. Thus, by  Lemma \ref{Pn}, $h$ factors through the affinization of $E_K$. We can complete diagram (\ref{diacom}) to the following commutative diagram
$$
\xymatrix{
\overline{U} \ar[dr] & & \\
U \ar[r] \ar[u] & \mathbb{P}^n & \\
S \ar[r] \ar[u]^-{f|_S} & E_K \ar[u]^-h \ar[r] & \mathrm{Aff}(E)_K \ar[ul] \ar@/_2pc/[uull]_-{h'}
}
$$
The commutativity of the external  triangle with edge $h'$ follows  from the fact that the image of $h$  must be contained in the closure of the image of $f$. In order to conclude we need to show that $h'$ factors through $U$
$$
\xymatrix{
\overline{U}  & \\
U  \ar[u] &  \mathrm{Aff}(E)_K \ar[ul]_-{h'} \ar[l]_-{g}
}
$$

This is easy to check. Indeed as $U$ and $\overline{U}$ are schemes, we can replace  $\mathrm{Aff}(E)_K$ with $S^1_K$ and think in terms of loop spaces. It is a consequence of Zariski codescent for the loop space that if $\gamma: S^1_K \to \overline{U}$ is a loop, and $U \subset \overline{U}$ is an affine open subset, then the following two facts are equivalent
\begin{enumerate} 
\item The fiber product $S^1_K \times_{\overline{U}}U$ is non-empty 
\item The loop $\gamma$ factors through $U$
\end{enumerate}
By construction, the first condition is satisfied in our case. Thus $h'$ factors through $U$, and this implies that $h$ also factors through $U$. As a consequence $f$   factors also through $U$, and this concludes the proof.
\end{proof}

Proposition \ref{affinization} has the following useful consequence.

\begin{corollary}\label{corollary:LoopstackequalsMapo}
Let $X$ be a variety, and let $\Zar{X}$ denote its small Zariski site. 
\begin{enumerate}
\item The assignment mapping an open subset 
$
U \in \Zar{X}$ to the stack of quasi-constant maps $\Mapo{E}{U} \in \dStk$  
defines a cosheaf on $\Zar{X}$
$$
\Mapo{E}{-}: \Zar{X} \rightarrow\dStk
$$
\item The natural map $\cL U \to \Mapo{E}{U}$ defines an equivalence of $\dStk$-valued cosheaves on $\Zar{X}$
$$
\cL(-) \simeq \Mapo{E}{-}:  \Zar{X} \rightarrow\dStk
$$
\end{enumerate}
\end{corollary}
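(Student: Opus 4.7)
The plan is to deduce both statements directly from Proposition \ref{affinization} combined with the well-known Zariski codescent property of the loop space (the Lemma of Ben-Zvi--Nadler quoted in the preliminaries). The key observation is that Proposition \ref{affinization} gives an equivalence $\Mapo{\mathrm{Aff}(E)}{U}\simeq \Mapo{E}{U}$ for every variety $U$, and by the universal property of affinization together with the formality identification $\mathrm{Aff}(E)\simeq \mathrm{Aff}(S^1)$ recalled in Section \ref{subsection:affinization}, there is a canonical equivalence $\Map{\mathrm{Aff}(E)}{U}\simeq \cL U$. Moreover, since $\cL U$ is connected the stack $\Map{\mathrm{Aff}(E)}{U}$ coincides with $\Mapo{\mathrm{Aff}(E)}{U}$, exactly as observed in the proof of Lemma \ref{affopen}.

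First I would assemble these equivalences into a natural transformation of functors $\Zar{X}\to \dStk$. The unit $E\to \mathrm{Aff}(E)$ induces the map $\cL U\simeq \Map{\mathrm{Aff}(E)}{U}=\Mapo{\mathrm{Aff}(E)}{U}\to \Mapo{E}{U}$, and this is plainly natural in open immersions $V\hookrightarrow U$, since the whole construction is functorial in the target. Proposition \ref{affinization} then says this natural transformation is a pointwise equivalence on $\Zar{X}$, hence an equivalence of the underlying $\dStk$-valued presheaves (or, more precisely, functors out of $\Zar{X}$). This proves part (2) modulo part (1).

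For part (1), I would invoke the cosheaf property of the loop space on $\Zar{X}$, which is exactly the content of Lemma 4.2 of \cite{BZNLoopConn} quoted earlier. Since $\Mapo{E}{-}$ is naturally equivalent to $\cL(-)$ as a functor on $\Zar{X}$, it inherits the cosheaf property. Explicitly, for any Zariski open cover $\{U_i\to U\}$ in $\Zar{X}$ with \v{C}ech nerve $\check{C}(\coprod U_i\to U)$, applying $\Mapo{E}{-}$ yields the same simplicial diagram as $\cL(-)$, whose colimit computes $\cL U\simeq \Mapo{E}{U}$.

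There is essentially no obstacle here beyond checking naturality; the genuine work was already carried out in establishing Proposition \ref{affinization}. The only minor point worth verifying carefully is that the comparison map $\cL U\to \Mapo{E}{U}$ is the one induced by the unit $E\to \mathrm{Aff}(E)$ under the identification $\cL U\simeq \Map{\mathrm{Aff}(E)}{U}$, and that this identification is itself compatible with restriction along open immersions $V\hookrightarrow U$. Both facts are immediate from the universal property of the mapping stack and of affinization, so the argument really is a formal packaging of Proposition \ref{affinization} together with the cosheaf property of $\cL(-)$.
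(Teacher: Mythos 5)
Your proof is correct and follows essentially the same route as the paper: reduce to Proposition \ref{affinization}, observe that the resulting pointwise equivalences $\cL U \simeq \Mapo{E}{U}$ are natural over $\Zar{X}$, and transfer the cosheaf property from $\cL(-)$ (Lemma 4.2 of Ben-Zvi--Nadler) to $\Mapo{E}{-}$. You spell out the intermediate identification $\cL U \simeq \Map{\mathrm{Aff}(E)}{U} = \Mapo{\mathrm{Aff}(E)}{U}$ a bit more explicitly than the paper's proof, but the structure of the argument is the same.
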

\begin{proof}
This an immediate consequence of Proposition \ref{affinization}.  Indeed by Proposition \ref{affinization} for every $U \in \Zar{X}$ there is an equivalence 
$$
\cL(U) \simeq \Mapo{E}{U}
$$
which is natural for maps in $\Zar{X}$. This implies that $\Mapo{E}{-}$ is equivalent to $\cL(-)$ as $\dStk$-valued precosheaves on $\Zar{X}$. As $\cL(-)$ is a cosheaf, it follows that $\Mapo{E}{-}$ is also a cosheaf. 
\end{proof}
 
\subsubsection{Some generalizations of Proposition \ref{affinization}}
\label{sec:general}
We formulated Proposition \ref{affinization} for an elliptic curve $E$, as this is the case we will be interested in the remainder of the paper. However the statement holds more generally for any smooth and proper curve $C$ over $k$, namely the unit map $C \to \mathrm{Aff}(C)$ induces  an equivalence 
$$
\Mapo{\mathrm{Aff}(C)}{X} \to \Mapo{C}{X}
$$
The proof we have given for the case $C=E$ extends without variations to this more  general setting.  

Corollary \ref{corollary:LoopstackequalsMapo} also generalizes to the case of a general smooth and proper curve $C$.   
In characteristic 0, there is an equivalence between $\mathrm{Aff}(C)$ and the affinization of the Betti stack of a wedge of $g$ circles $S_g$, where $g$ is the genus of $C$
$$
\mathrm{Aff}(C) \simeq  \mathrm{Aff}(S_g) \simeq k[\varepsilon_1, \ldots, \varepsilon_g]
$$  
where 
\begin{itemize}
\item $k[\varepsilon_1, \ldots, \varepsilon_g]$ denotes the square-zero extension of $k$ by $g$ generators with $\mathrm{deg}(\varepsilon_i)=1$
\item the last equivalence follows from the fact that, in characteristic zero, the cdga-s 
$$\mathrm{Hom}(\cO_C, \cO_C) \quad \text{and} \quad C^*_{sing}(S_g)$$ 
are both formal and therefore quasi-equivalent to their cohomology.
\end{itemize}
 This implies in particular that there are equivalences 
 $$
 \Mapo{C}{X} \simeq \Mapo{S_g}{X} \simeq \cL X \times_X \ldots \times_X \cL X
 $$
 where the last  one follows from the presentation of $S_g$ as an iterated push-out  
 $$
 S_g \simeq S^1 \coprod_{pt} S^1 \coprod_{pt} \ldots \coprod_{pt} S^1
 $$
A simple  observation which we have used repeatedly is that $\cL X$ is local with respect to the Zariski topology on $X$. The argument in Lemma 4.2 of \cite{BZNLoopConn} immediately extends to show that $\Mapo{S_g}{X}$ also defines a cosheaf on $\Zar{X}$. It follows that the conclusions of Corollary \ref{corollary:LoopstackequalsMapo} apply to the case of a general smooth and proper curve, and in particular $\Mapo{C}{X}$ defines a cosheaf on $\Zar{X}$. 

It is natural to ask whether Proposition \ref{affinization}  and Corollary \ref{corollary:LoopstackequalsMapo} are in fact general features of quasi-constant maps, beyond the curve case. As this will not play any role in the sequel, we   leave these as open questions without attempting to answer them in this article.
\begin{question}
Let $X$ be a scheme and let $S$ be any derived stack.
\begin{enumerate}
\item Under what assumptions on $S$ does $\Mapo{S}{X}$ define a cosheaf of stacks on $|X|_{Zar}$? 
\item Under what assumptions on $S$  is there an equivalence of stacks
$$
\Mapo{\mathrm{Aff}(S)}{X} 
 \simeq  \Mapo{S}{X}  \, \, ?
$$
\end{enumerate}
\end{question}

\begin{remark}
It should be possible to extend Proposition \ref{affinization}  to the setting where the target $X$ is an algebraic space satisfying suitable properties. The key observation should be that, although in general  loops are not local for the \'etale topology, they are when the target is a scheme or an algebraic space. As this extra generality is not essential for our intended applications, we will not pursue this  any further.
\end{remark}

\subsection{Quasi-constant maps to global quotient stacks}
\label{label:globalquotient}
Let $X$ be a variety over $k$, and assume that $X$ carries an action of an algebraic torus $T$.  
\begin{definition} The small \emph{$T$-equivariant Zariski site} of $X$, which we denote by   
$\Zar{X}^T$, is   the site having 
\begin{enumerate}
\item as objects, schemes $U$ equipped with an action of $T$, and a $T$-equivariant open immersion  $U \to X$;
\item as morphisms, $T$-equivariant open immersions $V \to U$ over $X$;
\item  as covering families, jointly surjective families of $T$-equivariant immersions.
\end{enumerate}
\end{definition}
The main result of this Section is Theorem \ref{proposition:codescentnormalvar} below, which generalizes the first part of Corollary \ref{corollary:LoopstackequalsMapo} to the setting of stacks that are global quotients of varieties by a $T$-action. We remark that the second part of Corollary \ref{corollary:LoopstackequalsMapo} fails in the presence of a $T$-action, and this is a key feature differentiating  elliptic Hochschild homology from ordinary Hochschild homology. Another important difference with Corollary \ref{corollary:LoopstackequalsMapo}  is that in the statement of Theorem \ref{proposition:codescentnormalvar} we require $X$ to be normal. The reason is that the argument we will give relies on Sumihiro's Theorem \cite{sumihiro1974equivariant}, which does not hold in general without the normality assumption. 
  
\begin{theorem}\label{proposition:codescentnormalvar}
Assume that $X$ is normal. Then the assignment mapping  
$U \in \Zar{X}^T$ to the stack of quasi-constant maps $$\Mapo{E}{[U/T]} \in \dStk$$  
defines a cosheaf on $\Zar{X}^T \, ,$
$$
\Mapo{E}{[-/T]}: \Zar{X}^T \rightarrow\dStk
$$
\end{theorem}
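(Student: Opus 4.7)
The cosheaf property is equivalent to showing that, for any $T$-equivariant open cover $\{U_i \to X\}$ in $\Zar{X}^T$, the induced map
\begin{equation*}
r : \coprod_i \Mapo{E}{[U_i/T]} \longrightarrow \Mapo{E}{[X/T]}
\end{equation*}
is an effective epimorphism of derived stacks. Indeed, Lemma \ref{remark:Mapolimitpreserving} identifies the \v{C}ech nerve of $r$ with $\Mapo{E}{-}$ applied to the \v{C}ech nerve of $\coprod [U_i/T] \to [X/T]$, whose $n$-th term is $\coprod_{i_0,\dots,i_n}\Mapo{E}{[U_{i_0\cdots i_n}/T]}$: this is precisely the simplicial diagram computing the proposed cosheaf value. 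Since effective epimorphisms in the $\infty$-topos $\dStk$ realize to their targets, the cosheaf condition follows once $r$ is effective epi. Each summand of $r$ is an open immersion by Lemma \ref{lemma:openinclusion}, and the target is geometric by the Halpern-Leistner--Preygel result recalled in the preliminaries, so the pointwise criterion of Proposition \ref{proposition:pointwisecriterion2} applies: the problem reduces to lifting every closed $K$-point of $\Mapo{E}{[X/T]}$ to some $\Mapo{E}{[U_i/T]}$.

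A closed $K$-point is the data of a degree-zero $T$-bundle $P \to E_K$ and a $T$-equivariant morphism $\tilde f : P \to X_K$, and the required lift exists iff $\tilde f(P)$ is contained set-theoretically in some $U_i$. Normality enters at this stage through Sumihiro's theorem, which yields a refining base of $\Zar{X}^T$ by $T$-invariant affine opens, so we may assume each $U_i = \Spec R_i$ is $T$-invariant affine. Since the $U_i$ are $T$-invariant, it is enough to establish the key set-theoretic claim that $\tilde f(P)$ lies in a single $T$-orbit of $X_K$: then any $U_i$ meeting that orbit automatically contains it.

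The main obstacle is the single-orbit claim. My strategy is to embed a $T$-invariant affine neighborhood of a point of $\tilde f(P)$ equivariantly into a $T$-representation $V = \bigoplus_\chi V_\chi$, and to decompose $\tilde f$ by weight; each component becomes a global section of $V_\chi \otimes \mathcal{L}_\chi(P)$ on $E_K$, where $\mathcal{L}_\chi(P)$ denotes the degree-zero line bundle associated to the character $\chi$. On an elliptic curve such a line bundle is either trivial with nowhere-vanishing constant sections, or non-trivial with no global sections; the enlarged Definition \ref{quasiconstant} is precisely designed to absorb the intermediate torsion case in which $\mathcal{L}_\chi(P)^{\otimes n}\simeq \mathcal{O}_{E_K}$. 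Combining these constancy results with $T$-equivariance should then force $\tilde f(P)$ to coincide with the $T$-orbit of a single vector $v = \sum_\chi s_\chi \in V$. The delicate technical point, which I expect to be the hardest part, is the careful bookkeeping of the torsion line bundles $\mathcal{L}_\chi(P)$ and the globalization of the local linear model back to $X$ itself.
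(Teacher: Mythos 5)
Your reduction of the cosheaf property to the pointwise criterion via Lemma \ref{remark:Mapolimitpreserving}, \ref{lemma:openinclusion} and Proposition \ref{proposition:pointwisecriterion2}, and your appeal to Sumihiro to reduce to $T$-invariant affine opens, match the paper exactly. The single-orbit claim and the mechanism behind it (degree-zero line bundles on $E_K$ have only constant sections, so the weight components of the map are constant) are also the right heart of the argument, and agree with the paper's Lemma \ref{lemma:Ancodescent}.

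The gap is precisely the step you flag as ``globalization of the local linear model back to $X$,'' and it is not a bookkeeping issue: it is circular as stated. You embed a $T$-invariant affine neighborhood $U$ of \emph{a point} of $\tilde f(P)$ into a $T$-representation $V$ and decompose $\tilde f$ by weight, but this weight decomposition only describes the restriction of $\tilde f$ to $\tilde f^{-1}(U)$; it presupposes that all of $\tilde f(P)$ already lands in $U$, which is what you are trying to prove. The paper's Theorem \ref{proposition:codescentnormalvar} closes this gap by passing to a projective completion: given any $U_i$ meeting the image, it chooses a $T$-equivariant locally closed embedding $U_i \hookrightarrow \bA^N \subset \bP^N$ (Luna), uses the valuative criterion of properness for $[\bP^N/T]$ (Remark \ref{valuativeprop}) to extend the partial map from $S = [U_i/T]\times_{[X/T]}E_K$ to a globally defined $h: E_K \to [\bP^N/T]$, and then invokes the separately proved projective case (Lemma \ref{lemma:Pncodescent}, established by producing a $T$-equivariant section of $\cO(1)$ and inducting on dimension) to conclude that $h$, and hence $f$, lands in a single orbit inside $U_i$. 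Some version of this compactification-and-extension argument is needed; the affine weight computation alone does not reach it.
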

 If $X$ satisfies the conclusion of Theorem \ref{proposition:codescentnormalvar}, we say that $
\Mapo{E}{[X/T]}$ satisfies $T$-equivariant Zariski codescent on $[X/T]$. In its main lines the proof of Theorem \ref{proposition:codescentnormalvar} closely parallels the argument  given in the   Section \ref{subsection:codescentnoaction} in non-equivariant case. However there are a few minor  subtleties that arise when taking into account the $T$-action.  
\begin{remark}
Let us stress that Theorem \ref{proposition:codescentnormalvar} fails if we consider finer topologies such as the \'etale or smooth topology on $[X/T]$. Consider for instance the case when $T=\bG_m$ and $X = \Spec k$. Then $\Spec k \to [\Spec k/\bG_m]$ is a smooth cover, but 
$$
\Mapo{E}{\Spec k} \simeq \Spec k \to \Mapo{E}{[\Spec k/\bG_m]} \simeq \Pic^0(E)
$$
clearly is not.
\end{remark}

    We start by proving Theorem \ref{proposition:codescentnormalvar}  in two important special cases, namely when $X$ is isomorphic to an affine space or to a projective space.  Up to fixing an isomorphism $$T \cong (\mathbb{G}_m)^n$$ and applying a change of basis to $\bA^N$, we can diagonalize the  action of $T$, which can then be written in standard form as follows.   
  For every   $\lambda = 
    (\lambda_1, \ldots, \lambda_n) \in (\mathbb{G}_m)^n \cong T$,  and 
    $(z_1 \ldots z_N)  \in \bA^N$
    $$  
(\lambda_1, \ldots, \lambda_n) \cdot (z_1 \ldots z_N) =    (\prod_{i=1}^{n}\lambda_i^{w^1_i}z_1,\dots,\prod_{i=1}^{n}\lambda_i^{w^N_i}z_N)   
    $$     
for an appropriate collection of integers  $\{ w_i^j \}$, called \emph{weights}.
	\begin{lemma}
    \label{lemma:Ancodescent}
The stack of quasi-constant maps $\Mapo{E}{[\mathbb{A}^N/T]}$ satisfies $T$-equivariant Zariski codescent on $[\mathbb{A}^N/T]$. 
\end{lemma}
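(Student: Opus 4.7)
The plan is to deduce $T$-equivariant Zariski codescent from the pointwise criterion of Proposition \ref{proposition:pointwisecriterion2}. Given a $T$-equivariant Zariski cover $\{U_i \to \bA^N\}$, I would consider the map
$$
\phi:\coprod_i \Mapo{E}{[U_i/T]} \to \Mapo{E}{[\bA^N/T]}.
$$
By Lemma \ref{lemma:openinclusion} this is a coproduct of open immersions, and the target is $1$-geometric by the Halpern-Leistner--Preygel theorem cited in the preliminaries. Once the pointwise lifting is checked, Proposition \ref{proposition:pointwisecriterion2} will show $\phi$ is an effective epimorphism, so its \v{C}ech nerve is a simplicial resolution. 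By Lemma \ref{remark:Mapolimitpreserving}, this \v{C}ech nerve is identified termwise with the simplicial object of stacks $\Mapo{E}{[U_{i_0} \cap \cdots \cap U_{i_n}/T]}$, which is exactly the cosheaf descent condition.

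The heart of the argument is the pointwise lifting: for any field $K$ and point $f: \Spec K \to \Mapo{E}{[\bA^N/T]}$, produce an index $i$ such that $f$ lifts through $\Mapo{E}{[U_i/T]}$. By the universal property of the mapping stack, such an $f$ corresponds to a $T$-bundle $P$ on $E_K$ together with a $T$-equivariant morphism $s: P \to \bA^N_K$. The structure map into $\Bun_T^0(E_K) = \Pic^0(E_K) \otimes_{\bZ} \check{T}$ forces $P$ to be a degree-$0$ $T$-bundle, meaning every line bundle on $E_K$ obtained from $P$ via a character of $T$ has degree $0$. Decomposing $\bA^N$ into one-dimensional weight spaces for the torus action, each coordinate $z_j \circ s$ is a section of a degree-$0$ line bundle $L_{w^j}$ on $E_K$. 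Now I would invoke the key classical fact that a nonzero section of a degree-$0$ line bundle on an elliptic curve is nowhere vanishing (its divisor is effective of degree $0$, hence empty), so each $z_j \circ s$ is either identically zero or nowhere vanishing.

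Setting $S := \{j : z_j \circ s \equiv 0\}$, it follows that the image of $s$ lies in the $T$-invariant stratum
$$
\Sigma := \{z \in \bA^N : z_j = 0 \Leftrightarrow j \in S\} \cong (\bG_m)^{|S^c|},
$$
on which $T$ acts through a homomorphism of tori. Consequently all $T$-orbits in $\Sigma$ are closed and the orbit space $\Sigma/T$ is itself a quotient torus, in particular affine. Since $E_K$ is proper, the composition $E_K \to [\Sigma/T] \to \Sigma/T$ is constant; and since the fibers of $\Sigma \to \Sigma/T$ are single $T$-orbits, $s$ factors through a single $T$-orbit $O \subseteq \Sigma$. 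Because $\{U_i\}$ is a $T$-invariant cover of $\bA^N$, some $U_i$ contains a point of $O$, and hence by $T$-invariance contains $O$ entirely; this produces the desired lift of $f$.

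The conceptual content is concentrated in the degree-$0$ dichotomy on the elliptic curve, which pins quasi-constant maps to individual $T$-orbits. I expect the main technical care will be in ensuring that the effective epimorphism produced by the pointwise criterion assembles correctly into cosheaf descent through Lemma \ref{remark:Mapolimitpreserving}, and in verifying cleanly that the degree condition on $P$ is forced by the structure map. Neither step looks substantively difficult.
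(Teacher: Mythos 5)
Your proposal is correct and follows essentially the same strategy as the paper's proof: reduce to the pointwise criterion, describe a $K$-point of $\Mapo{E}{[\bA^N/T]}$ as a degree-zero $T$-bundle $P$ on $E_K$ together with sections $z_j\circ s$ of degree-zero line bundles, and use the dichotomy ``zero or nowhere vanishing'' to pin the image of $s$ to a single $T$-orbit, which then lands in one of the $T$-invariant opens $U_i$. Where the paper compresses the final step to ``Thus the sections $s_j$ are all constant. This immediately implies that the image of $f$ is a single orbit,'' you spell it out: you pass to the $T$-invariant stratum $\Sigma\cong(\bG_m)^{|S^c|}$, observe that $\Sigma/T$ is a quotient torus (hence affine), and use properness of $E_K$ to force the composite $E_K\to\Sigma/T$ to be constant — a clean way to justify the paper's implicit claim. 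The front-end observations (that $\phi$ is a coproduct of open immersions by Lemma \ref{lemma:openinclusion}, that the target is $1$-geometric by Halpern-Leistner--Preygel, and that effective epimorphism plus Lemma \ref{remark:Mapolimitpreserving} yields cosheaf descent) are exactly the scaffolding the paper relies on in stating ``By the pointwise criterion \ldots it is sufficient to show \ldots.''
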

\begin{proof}
We fix an isomorphism $T \cong (\mathbb{G}_m)^n$ and a diagonalization of the $T$-action. Maps to $[\bA^N/(\bG_m)^n]$ classify the datum of 
\begin{enumerate}
\item  $n$ line bundles $\cL_i$, $i \in \{1, \ldots, n \}$
\item and $N$ sections $s_j \in H^0(\otimes_{i=1}^{i=n}\cL_i^{w^j_i})$, $j \in \{1, \ldots, N\}$ 
\end{enumerate} 
There is a natural map 
$$
 \Mapo{E}{[\bA^N/T]} 
 \to \Mapo{E}{[\Spec k/T]} \simeq \Pic^0(E)^N 
$$
which forgets the information on sections. 

By Proposition \ref{proposition:pointwisecriterion2}, it is sufficient to show  that given a $T$-equivariant open cover $\{U_i\}_{i \in I}$ of $\mathbb{A}^N$ and 
a map
$$
f: E \times_{\Spec k}\Spec K = E_K \to [\mathbb{A}^N/T]
$$
corresponding to a closed point of $\Mapo{E}{[\bA^N/T]}$, there is an open subset $U_i$ such that $f$ factors as 
$$
\xymatrix{
& [U_i/T] \ar[d] \\
E_K \ar@{-->}[ur] \ar[r]^-f & [\bA^N/T]}
$$

We will prove this by showing the stronger claim that $f$ has to factor through the image of a $T$-orbit in $[\bA^N/T]$. 
As we discussed, giving the map $f$ is the same as giving $n$ line bundles $\cL_i$ on $E_K$ and $N$ sections $s_j$ of appropriate tensor powers of the $\cL_i$-s. As $f$ is parametrized by a point in $\Mapo{E}{[\bA^N/T]}$ all the $\cL_i$-s have degree zero. Thus the sections $s_j$ are all constant. This immediately implies that the image of $f$ is a single orbit of the $T$-action. 
\end{proof}


Next, let us consider a $T$-action on the projective space $\mathbb{P}^N$. Up to fixing an isomorphism $T \cong (\bG_m)^n$ and applying an  automorphism of $\mathbb{P}^N$, we can put the $T$-action in the following   standard form. For  every $ \lambda = 
    (\lambda_1, \ldots, \lambda_n) \in (\mathbb{G}_m)^n \cong T$,  and $[z_0, \ldots, z_N] \in \bP^N$
\begin{equation}
\label{standardform}
\lambda  \cdot [z_0, z_1 \ldots z_N] =    [z_0, \prod_{i=1}^{n}\lambda_i^{w^1_i}z_1,\dots,\prod_{i=1}^{n}\lambda_i^{w^N_i}z_N] \in \bP^N  
\end{equation} 
for an appropriate collection of integers  $\{ w_i^j \}$.  
In particular, 
 we can assume that the standard toric affine open cover of $\mathbb{P}^N$ is $T$-equivariant.

\begin{lemma}\label{lemma:Pncodescent}
The stack of quasi-constant maps $\Mapo{E}{[\mathbb{P}^N/T]}$ satisfies $T$-equivariant Zariski codescent on $[\mathbb{P}^N/T]$.
\end{lemma}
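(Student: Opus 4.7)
My plan is to proceed in the spirit of Lemma \ref{lemma:Ancodescent}, combined with the degree-zero argument from Lemma \ref{Pn}. After fixing $T \cong (\mathbb{G}_m)^n$ and putting the action on $\mathbb{P}^N$ in the standard form (\ref{standardform}), the affine charts $U_j = \{z_j \neq 0\}$, for $j = 0,\ldots,N$, form a $T$-equivariant Zariski open cover of $\mathbb{P}^N$. By Lemma \ref{lemma:openinclusion} each inclusion $\Mapo{E}{[U_j/T]} \hookrightarrow \Mapo{E}{[\mathbb{P}^N/T]}$ is an open immersion, so by the pointwise criterion of Proposition \ref{proposition:pointwisecriterion2} it is enough to show that every geometric point $f\colon E_K \to [\mathbb{P}^N/T]$ of $\Mapo{E}{[\mathbb{P}^N/T]}$ factors through some $[U_{j_0}/T]$.

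I would then unpack $f$ via the presentation $[\mathbb{P}^N/T] \simeq [(\mathbb{A}^{N+1}\setminus 0)/(T \times \mathbb{G}_m)]$, so that $f$ is encoded by line bundles $\cL_1,\dots,\cL_n$ on $E_K$ (the $T$-torsor), a line bundle $\mathcal{M}$ on $E_K$ (from the $\mathbb{G}_m$-torsor; it is a $T$-twist of $f^\ast \mathcal{O}(1)$), and sections $s_j \in H^0(E_K,\mathcal{M}\otimes\bigotimes_i\cL_i^{-w_i^j})$ (with $w_i^0 = 0$) that do not simultaneously vanish. The heart of the argument, and the only genuine new input beyond Lemma \ref{lemma:Ancodescent}, is the claim that $\mathcal{M}$ is of degree zero. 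To obtain this I would use that quasi-constancy is preserved under post-composition (a map factoring through $[\Spec k/A]$ still factors through $[\Spec k/A]$ after any composition), so that composing $f$ with the classifying maps $[\mathbb{P}^N/T] \to [\Spec k/T]$ and $[\mathbb{P}^N/T] \to [\Spec k/\mathbb{G}_m]$ (the latter for some $T$-linearization of $\mathcal{O}(1)$) yields quasi-constant maps which by definition land in $\Bun^0_T(E_K)$ and $\Pic^0(E_K)$ respectively. This forces the $\cL_i$ to have degree zero, and then also forces $\mathcal{M}$ to have degree zero, since the $T$-linearization only modifies $\mathcal{M}$ by a product of the $\cL_i^{\pm 1}$.

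With the degree-zero claim in hand the conclusion is immediate: each $\mathcal{M}\otimes\bigotimes_i\cL_i^{-w_i^j}$ is a degree zero line bundle on the elliptic curve $E_K$, hence either it is trivial with constant sections or it admits no nonzero sections. Since the $s_j$ do not simultaneously vanish, at least one $s_{j_0}$ is a nonzero constant, hence nowhere vanishing, and this forces $f$ to factor through the $T$-invariant chart $[U_{j_0}/T]$. I expect the degree-zero claim for $\mathcal{M}$ to be the only nontrivial step; everything else runs parallel to the proofs of Lemmas \ref{lemma:Ancodescent} and \ref{Pn}.
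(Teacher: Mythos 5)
Your approach is correct and genuinely different from the paper's. The paper's proof proceeds by producing a non-vanishing $T$-invariant section $\sigma$ of $\cO_{\bP^N}(r)$ (which requires a GIT semistability argument), reducing to $r=1$ via a $T$-equivariant Veronese embedding into a larger $\bP^M$, and then using the dichotomy ``$f^\ast\sigma$ is a nonzero constant vs.\ $f^\ast\sigma = 0$'' to either reduce to Lemma \ref{lemma:Ancodescent} or descend to the invariant hyperplane $H\cong\bP^{N-1}$ and induct on $N$. Your proof bypasses the semistability discussion, the Veronese trick, and the induction entirely, by unwinding $f$ through the $(T\times\bG_m)$-quotient presentation $[\bP^N/T]\simeq[(\bA^{N+1}\setminus 0)/(T\times\bG_m)]$. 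Both arguments hinge on the same general fact that quasi-constant maps pull back line bundles to degree zero, but you apply it to all $N+1$ coordinate bundles at once rather than to a single cleverly chosen one; your argument for $\deg\mathcal{M}=0$ via post-composition with the classifying map of a $T$-linearization of $\cO(1)$ is exactly right. This is a cleaner route.

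One small gap to flag: as written you apply the pointwise criterion only to the \emph{standard} toric cover $\{U_j\}$, whereas $T$-equivariant Zariski codescent requires the effective-epimorphism conclusion for \emph{every} $T$-equivariant cover, and an arbitrary cover need not be refined by the standard one. The paper handles this by proving the stronger statement that $f$ factors through the image of a single $T$-orbit, which then lifts across any $T$-invariant open that meets it. Your argument actually establishes exactly this stronger claim with no extra work: since every $s_j$ is a (possibly zero) constant section of a degree-zero line bundle, the induced equivariant map from the total space of the $(T\times\bG_m)$-torsor to $\bA^{N+1}\setminus 0$ has constant coordinates, so its image in $\bP^N$ is a single $T$-orbit. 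You should state that as your conclusion rather than ``factors through some $U_{j_0}$''; with that phrasing, the proof is complete.
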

\begin{proof}
The proof strategy is the same as for Lemma \ref{lemma:Ancodescent}. Namely, consider a map $$f:E_K \to [\bP^N/T]$$  classified by a point of 
$\Mapo{E}{[\bP^N/T]}$. 
We need to show that $f$ factors through the image of a $T$-orbit in 
$[\mathbb{P}^N/T]$. Now we make the following observations:
\begin{enumerate}
\item all line bundles on $\mathbb{P}^N$ admit a $T$-equivariant structure;
\item there exists a $r > 0$ such that there exists a non-vanishing $T$-equivariant section $\sigma$ of $\cO_{\mathbb{P}^N}(r)$;
\item we can further assume that, in point $(2)$, $r=1$ on condition of replacing, if needed, $\mathbb{P}^N$ with a larger projective space $\mathbb{P}^M$ equipped with a $T$-action and such that there is a $T$-equivariant Veronese embedding 
$$
\mathbb{P}^N  \to \mathbb{P}^M
$$
\end{enumerate}
Although these are all standard facts, let us sketch a proof. We start with $(1)$. We identify $\mathbb{P}^N$ with the projectivization $\mathbb{P}(V)$ of the vector space $V$ which is  dual to $H^0(\bP^N,\cO_{\bP^N}(1))$. Note that we can lift the $T$-action on $\bP^N$ to a linear action on $V$. This turns $V$ into a $T$-representation. Taking the dual representation gives a  $T$-action on $H^0(\bP^N,\cO_{\bP^N}(1))$.  This induces a $T$-equivariant structure on the line bundle $\cO_{\bP^N}(1)$. Taking tensor powers and duals generates $T$-equivariant structures on all the bundles $\cO_{\bP^N}(m)$, for all integers $m$.

Let us consider $(2)$ next. The existence of such a section $\sigma$ for some $r$ is equivalent to fact that the GIT quotient $\bP^N//T$ is non-empty, i.e. it is equivalent to the existence of a semistable point of $\bP^N$ with respect to the $T$-action. A semistable point in $\bP^N$ with respect to a linear action is by definition a semistable point of the lift of the action to the vector space $V \cong k^{N+1}$ such that 
$\mathbb{P}^N \cong \mathbb{P}(V)$, i.e. a point in $V$ such that the closure of its orbit under the $T$-action does not contain $0$. The existence of a semistable point is clear after we write the action in standard form (see equation (\ref{standardform}) above). Indeed, the point $(1,0,0,\dots,0)$ in $k^{N+1}$ is fixed by the lift of the action, and hence it is in particular  semistable, as the closure of its orbit does not contain the point $0$. This implies that the point $[1:0:0:\dots:0]$ in $\bP^N$ is also semistable, hence the GIT quotient $\bP^N//T$ is nonempty. 

As for point $(3)$, i.e. the reduction to the case $r=1$, it is sufficient 
to linearize the action of $T$ on $\bP^N$ by choosing an equivariant structure on $\cO_{\bP^N}(r)$ which makes $\sigma$ into an equivariant section. As familiar from GIT theory, the choice of linearization yields a $T$-equivariant embedding
$$\bP^N \to \bP(H^0(\cO_{\bP^N}(r))^\vee) \cong \bP^M$$ 
In particular, we obtain an isomorphism of $T$-modules
\begin{equation*}
    H^0(\bP^M,\cO_{\bP^M}(1)) \cong H^0(\bP^N,\cO_{\bP^N}(r))
\end{equation*}
which restricts to  the spaces of equivariant sections
\begin{equation*}
    H^0(\bP^M,\cO_{\bP^M}(1))^{T} \cong H^0(\bP^N,\cO_{\bP^N}(r))^{T}
\end{equation*}
hence a bijective correspondence between the $T$-equivariant sections of the two bundles. Note that $f$ factors through the image of a $T$-orbit if and only the composite map
$$
E_K \to [\bP^N/T] \to [\bP^M/T]
$$ 
factors through the image of a $T$-orbit. Thus, from the perspective of the argument we are carrying out, we can harmlessly replace $\bP^N$ with $\bP^M$.  We do this implicitly in the sequel, and in particular assume that $r=1$ and $\sigma$ is linear, but we will not rename either $\bP^N$ or $\sigma$.

Consider the map $\phi$  classifying the line bundle $\cO_{[\bP^N/T]}(r)$, $\, 
\phi: [\bP^N/T] \to [\Spec k/\bG_m].$   
If the map $f$ is classified by a geometric point  of $\Mapo{E}{[\bP^N/T]}$, the pullback line bundle $f^*(\cO_{[\bP^N/T]}(r))$ is classified by the composition 
$$
\Spec(K) \stackrel{f}\to \Mapo{E}{[\bP^N/T]} \stackrel{\phi}\to \Mapo{E}{[\Spec k/\bG_m]} \cong \Pic^0(E)
$$
In particular, $f^\ast(\cO_{[\bP^N/T]}(r))$ is a degree-zero line bundle on $E$. 

Let $H \subset \bP^N$ be the zero locus of $\sigma$, and consider its complement $U$. As $\sigma$ is linear $H$ is a hyperplane and $U$ is a $T$-equivariant open subset of $\bP^N$ isomorphic to $\mathbb{A}^N$. We claim that the image of $f$ is entirely contained either in $H$ or in $U$. Indeed, assume to the contrary that the image of  $f$ intersects both $H$ and $U$. The pullback section $f^\ast\sigma \in H^0(E, f^\ast\cO_{[\bP^N/T]}(r))$ is not constant, as the image of $f$ intersects both the zero-locus of $\sigma$ and its complement. However the line bundle $f^\ast\cO_{[\bP^N/T]}(r)$ must be of degree zero, and therefore its sections are necessarily constant. Thus as we claimed $f$ factors either through 
$$[\bA^N/T] \cong [U/T] \subset [\bP^N/T] \quad \text{ or through} \quad [H/T] \subset [\bP^N/T]$$ 
In the first case, we are done by Lemma \ref{lemma:Ancodescent}. In the second case, $f$ factors through the lower dimensional  space $[H/T] \cong [\bP^{N-1}/T]$, and we can conclude by induction on the dimension $N$: note indeed  that the base case of the statement $N=1$ is clear, as the complement of a $T$-invariant open subset $\bA^1 \cong U \subset \bP^1$ is a fixed point, which is a $T$-orbit. 
\end{proof}

We are now ready to prove Theorem \ref{proposition:codescentnormalvar}.
\begin{proof}[Proof of Theorem \ref{proposition:codescentnormalvar}]
This proof is very similar to that of Proposition \ref{affinization}. By Sumihiro's Theorem \cite{sumihiro1974equivariant} there exist a $T$-equivariant open cover 
$\{U_i\}_{i \in I}$ of $X$ such that each $U_i$ is affine. Further any $T$-equivariant open cover of $X$ can be refined to such a affine $T$-equivariant open cover. Thus  we can restrict, without loss of generality, to covers in $\Zar{X}^T$ given by disjoint unions of affines equipped with a $T$-action.

Let $\{U_i \to X\}_{i \in I}$ be such a cover. By the pointwise criterion, it is enough to show that for every quasi-constant  map $f:E_K \to [X/T]$ there exists an $i \in I$ such that there is a factorization
$$
\xymatrix{
& [U_i/T] \ar[d] \\
E_K \ar@{-->}[ur] \ar[r]^-f & [X/T]}
$$
Let $U_i$ be such that the fiber product 
$$
S:=[U_i/T] \times_{[X/T]} E_K    
$$
is non-empty, and fix a $T$-equivariant embedding
$$
U_i \to \bA^N \subset \bP^N
$$
where $\bA^N$ are $\bP^N$ equipped with a suitable $T$-action; the existence of such an embedding is an application of Lemma 5.2 in \cite{Luna}. Note that $[\bP^N/T]$ satisfies the valuative criterion for properness: as a consequence, there always exists a (non unique) arrow $h$ making the diagram 
$$
\xymatrix{
[U_i/T] \ar[r] & [\mathbb{P}^n/T] \\
S \ar[r] \ar[u]^-{f|_S} & E_K \ar[u]_-h \ar@{-->}[lu]
}
$$
commute. Thus, by Lemma \ref{lemma:Pncodescent}, $h$ admits a lift represented by the dashed arrow in the diagram. 
\end{proof}

\begin{remark}
\label{valuativeprop}
$[\bP^N/T]$ satisfies the valuative criterion of properness as $T$-torsors are locally trivial in the Zariski topology, and thus every $T$-torsor on the fraction field of a DVR  has a section. Indeed, let $R$ be a DVR and  consider a map $m: \Spec \Frac R \to [\bP^N/T]$. Let us show that it admits a lift 
$$
\bar m: \Spec(R) \to [\bP^N/T]
$$ 
The map $m$ classifies a diagram of the form 
$$
\xymatrix{
P \ar[r] \ar[d] & \bP^N \ar[d] \\
\Spec(\mathrm{Frac}(R)) \ar[r]^-m \ar[ur]^-n \ar@/^2pc/@{-->}[u]  & [\bP^N/T]}
$$
where $P$ is a $T$-torsor; as $P$ is trivial we can pick a section (represented by a dashed arrow) and this induces a lift $n$. Applying the ordinary valuative criterion for properness to $\bP^n$, we obtain a map $\bar n$ which makes the following diagram commute
$$
\xymatrix{
 \Spec(R) \ar[r]^-{\bar{ n}}  & \bP^N \ar[d] \\
\Spec(\mathrm{Frac}(R)) \ar[u] \ar[r]^-n      & \bP^N}
$$
Then the  composite map 
$$
\bar m: \Spec(R) \stackrel{\bar n} \rightarrow \bP^N  \rightarrow [\bP^N/T]
$$
is the desired lift of $m$.  
\end{remark}

\section{The local model.}\label{section:mainthmproof} 
In this section we compute the elliptic Hochschild homology of quotient stacks of the form 
\begin{equation}
\label{model}
 [\mathbb{A}^l \times \mathbb{G}_m^{k}/T]
\end{equation}
where $T$ is an algebraic torus. This calculation is relatively straightforward, but important. It is obtained by combining the results of propositions \ref{remark:Kunneth}, \ref{proposition:toricopen1} and \ref{proposition:toricopen2}. It will also play a role in the next section as, via Luna slice theorem, we will often be  able to reduce our arguments to this local case.
Then, using codescent, we compute the elliptic Hochschild homology of smooth toric varieties with their standard torus action. This will give us a broad supply of geometrically interesting examples for which  elliptic Hochschild homology can be explicitly described. Finally, in Theorem \ref{theorem:ellipticohoftoricvar} we show that when $k=\mathbb{C}$ this coincides with the complexified equivariant cohomology of the analytification of $X$. 
  
We begin this section with the following simple observation, which is an analogue of the K\"unneth isomorphism for mapping stacks. 
\begin{proposition}[K\"unneth formula]\label{remark:Kunneth}
Let $G$ be an algebraic group and let $X$ and $Y$ be $G$-varieties. Then  
\begin{equation*}
\Mapo{E}{[X\times Y/G]}\simeq\Mapo{E}{[X/G]}\times_{\Mapo{E}{[\Spec k/G]}}\Mapo{E}{[Y/G]}
\end{equation*}
where $G$ acts on $X\times Y$ diagonally. Similarly, if we equip the product $X\times Y$ with the product action, we have that
\begin{equation*}
\Mapo{E}{[X\times Y/G\times G]}\simeq\Mapo{E}{[X/G]}\times\Mapo{E}{[Y/G]}
\end{equation*}
\end{proposition}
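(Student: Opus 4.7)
The strategy is to reduce both Künneth-type equivalences to the fact, already established as Lemma \ref{remark:Mapolimitpreserving}, that the functor $\Mapo{E}{-}: \dStk \to \dStk$ preserves limits. Given this, the whole content of the proposition lies in identifying the quotient stacks on the left-hand side as appropriate limits of quotient stacks to which one can then apply $\Mapo{E}{-}$ termwise.

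For the diagonal action, the key observation is the standard equivalence of stacks
\begin{equation*}
[X \times Y / G] \simeq [X/G] \times_{[\Spec k/G]} [Y/G]
\end{equation*}
which I would justify by noting that an $S$-point of either side parametrizes the same data: a $G$-torsor $P \to S$ together with a $G$-equivariant map $P \to X \times Y$, equivalently together with two $G$-equivariant maps $P \to X$ and $P \to Y$ over the common torsor $P$. Applying $\Mapo{E}{-}$ and using that it preserves the fiber product (Lemma \ref{remark:Mapolimitpreserving}) gives
\begin{equation*}
\Mapo{E}{[X \times Y / G]} \simeq \Mapo{E}{[X/G]} \times_{\Mapo{E}{[\Spec k/G]}} \Mapo{E}{[Y/G]} \, .
\end{equation*}

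For the product action, the analogous identification is the product equivalence
\begin{equation*}
[X \times Y / G \times G] \simeq [X/G] \times [Y/G]
\end{equation*}
which can again be checked directly on $S$-points, or alternatively by recognizing the right-hand side as the fiber product of $[X/G]$ and $[Y/G]$ over the terminal stack $\Spec k$ and noting that a $(G \times G)$-torsor is the product of two independent $G$-torsors. Invoking Lemma \ref{remark:Mapolimitpreserving} once more yields the second claimed equivalence.

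I do not anticipate any serious obstacle: once the stack-theoretic identifications above are in hand, the result is a formal consequence of the limit-preservation property. The one point that deserves a brief justification in the write-up is that Lemma \ref{remark:Mapolimitpreserving} really does apply, i.e.\ that the passage from the full mapping stack to its quasi-constant sub-stack commutes with fiber products. This is exactly the content of that lemma, which was proved by observing that taking the connected component of a chosen point is a limit-preserving operation on pointed stacks, so no additional work is required here.
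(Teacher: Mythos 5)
Your proposal is correct and follows essentially the same route as the paper: both identify $[X\times Y/G]$ with $[X/G]\times_{[\Spec k/G]}[Y/G]$ (and $[X\times Y/G\times G]$ with $[X/G]\times[Y/G]$) and then invoke Lemma \ref{remark:Mapolimitpreserving} to pass these limits through $\Mapo{E}{-}$. The only difference is that you spell out the functor-of-points justification of the stack identifications, which the paper leaves implicit.
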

\begin{proof}
Note that if $X$ and $Y$ are two $G$-varieties, then 
$$
[X\times Y/G]\simeq [X/G]\times_{[\Spec k/G]}[Y/G]
$$
for the diagonal $G$-action, and 
$$
[X\times Y/G\times G]\simeq [X/G]\times [Y/G]
$$
for the product $G$-action.
The formulas follow from the fact that $\Mapo{E}{-}$ preserves limits.
\end{proof}
 
 \subsection{The local model} 
In this section we consider quotient stacks of the form $[\mathbb{A}^l \times \mathbb{G}_m^{k}/T]$. 
Iterated applications of the K\"unneth formula allow us to break down the computations for product $\bA^l\times\bG_m^k$ to the cases of $\bA^1$ and $\bG_m$, and thus we will limit ourselves to describe these explicitly. This will be achieved in a sequence of Propositions. 
Let us  consider trivial actions first.
\begin{proposition}
Let $X$ be a variety over $k$ and let $T$ be an algebraic torus acting trivially on $X$. Then 
\begin{equation*}
\cHH_E([X/T]) \simeq \mathrm{HH}_\ast(X)\otimes_{k}\cO_{E_T}
\end{equation*}
\end{proposition}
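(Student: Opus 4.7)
The key idea is that a trivial $T$-action makes the quotient stack split as a product, and then the Künneth-type Lemma \ref{remark:Mapolimitpreserving} together with the Loop-space identification of Corollary \ref{corollary:LoopstackequalsMapo} computes everything.

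The plan is the following. First I would observe that, since $T$ acts trivially on $X$, the quotient stack splits as an external product
\begin{equation*}
[X/T] \simeq X \times [\Spec k /T] = X \times BT.
\end{equation*}
Because $\Mapo{E}{-}$ preserves limits (Lemma \ref{remark:Mapolimitpreserving}), and in particular products, this gives
\begin{equation*}
\Mapo{E}{[X/T]} \simeq \Mapo{E}{X} \times \Mapo{E}{BT}.
\end{equation*}

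Next I would identify each factor separately. For the first factor, since $X$ is a variety, Corollary \ref{corollary:LoopstackequalsMapo} yields an equivalence $\Mapo{E}{X} \simeq \cL X$. For the second factor, unwinding the definitions of Section \ref{section:preliminaries} gives $\Mapo{E}{BT} \simeq \Pic^0(E)_T \simeq E_T \times BT$. Combining these,
\begin{equation*}
\Mapo{E}{[X/T]} \simeq \cL X \times E_T \times BT,
\end{equation*}
and under this equivalence the structure map $p$ to $E_T$ is identified with the projection onto the middle factor.

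Finally, I would compute $p_\ast \cO_{\Mapo{E}{[X/T]}}$ by pushing forward along this projection. Since pushforward along $\cL X \times BT \to \Spec k$ computes the tensor product of global sections, and since $\Gamma(BT,\cO) \simeq k$ because $T$ is reductive (so that all higher cohomology vanishes and only the trivial representation contributes to $H^0$), we obtain
\begin{equation*}
p_\ast \cO_{\Mapo{E}{[X/T]}} \simeq \Gamma(\cL X, \cO) \otimes_k \Gamma(BT,\cO) \otimes_k \cO_{E_T} \simeq \mathrm{HH}_\ast(X) \otimes_k \cO_{E_T},
\end{equation*}
where we used $\Gamma(\cL X, \cO) \simeq \mathrm{HH}_\ast(X)$. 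The only step requiring any real care is the identification $\Gamma(BT,\cO) \simeq k$, which one justifies by invoking the semisimplicity of the category of $T$-representations in characteristic zero; everything else is formal from the preceding results.
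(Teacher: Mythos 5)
Your proof is correct and takes essentially the same approach as the paper: factor $[X/T] \simeq X \times BT$, apply the fact that $\Mapo{E}{-}$ preserves limits, and then read off the pushforward. The paper's own proof is terse and leaves all the details you supply — the identification of $\Mapo{E}{X}$ with $\cL X$, the computation of $\Mapo{E}{BT}$, and the observation that $\Gamma(BT,\cO)\simeq k$ in characteristic zero — as implicit, so your write-up is simply a more detailed account of the same argument.
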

\begin{proof}
Since the action is trivial, we have that $[X/T]\simeq X\times BT$. The proposition follows because   $\Mapo{E}{-}$ preserves products. \end{proof}
 
\begin{proposition}\label{proposition:toricopen1}
Let $T$ be an algebraic torus of rank $r$ acting on $\bA^1$. Assume that the $T$-action is non-trivial. 
 Then there is an equivalence 
\begin{equation*}
    \cHH_E([\bA^1/T]) \simeq \cO_{E_T}
\end{equation*}
\end{proposition}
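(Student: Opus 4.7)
The plan is to realize $\Mapo{E}{[\bA^1/T]}$ explicitly as a derived linear stack over $\Pic^0(E)_T$ and then to compute the pushforward via the standard formula, exploiting the equivariant weight of the universal line bundle. Since $T$ is a torus, the hypothesized non-trivial action on $\bA^1$ is linear and determined by a non-trivial character $\chi\colon T\to\bG_m$. This identifies $[\bA^1/T]$ with the total space $\mathbb{V}_{BT}(\cO_{BT}(\chi))$ of the weight-$\chi$ line bundle on $BT$; consequently a map $E\times S\to[\bA^1/T]$ is the same as the datum of a $T$-bundle $P$ on $E\times S$ together with a section of the associated line bundle $P\times^{T,\chi}\bA^1$.

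First I would describe the universal such line bundle. Let $\Pi\colon E\times\Pic^0(E)_T\to\Pic^0(E)_T$ denote the projection, and let $\cL_\chi$ be the universal line bundle on $E\times\Pic^0(E)_T$ obtained by applying $\chi$ to the universal degree-zero $T$-bundle. Under the decomposition $\Pic^0(E)_T\simeq E_T\times BT$, the line bundle $\cL_\chi$ has $T$-weight $\chi$ in the $BT$-direction. Combined with the description above, this would identify the forgetful map
\[
r\colon\Mapo{E}{[\bA^1/T]}\longrightarrow\Mapo{E}{[\Spec k/T]}\simeq\Pic^0(E)_T
\]
with the linear stack $\mathbb{V}_{\Pic^0(E)_T}(R\Pi_\ast\cL_\chi)$. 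Note that $R\Pi_\ast\cL_\chi$ is perfect (it can be computed via flat base change from the Poincar\'e bundle on $E\times E$) and inherits the $T$-weight $\chi$ from $\cL_\chi$.

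Next I would carry out the pushforward. From the structure-sheaf formula for a linear stack one obtains
\[
r_\ast\cO_{\Mapo{E}{[\bA^1/T]}}\simeq\Sym\bigl((R\Pi_\ast\cL_\chi)^\vee\bigr),
\]
whose $n$-th (derived) symmetric power has $T$-weight $-n\chi$. Writing $q\colon\Pic^0(E)_T\to E_T$ for the projection, so that $p=q\circ r$, the pushforward $q_\ast$ extracts $T$-invariants along the $BT$-factor. Since $\chi\neq 0$, the only weight-zero summand is $\Sym^0=\cO_{\Pic^0(E)_T}$, and hence $\cHH_E([\bA^1/T])\simeq q_\ast\cO_{\Pic^0(E)_T}\simeq\cO_{E_T}$, as desired.

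The main obstacle will be making rigorous the identification of $\Mapo{E}{[\bA^1/T]}$ with the linear stack $\mathbb{V}_{\Pic^0(E)_T}(R\Pi_\ast\cL_\chi)$: one has to check that the quasi-constant condition precisely amounts to restricting to degree-zero $T$-bundles (using Lemma \ref{lemma:openinclusion} together with the analysis in Section \ref{label:globalquotient}), and that the linear-stack formalism is compatible with the $T$-equivariant structure inherited from $\cL_\chi$ so that the weight bookkeeping used in the final step is valid.
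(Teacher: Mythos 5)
Your argument is correct and reaches the conclusion by a different route from the paper's. The paper diagonalizes the action and describes $\Mapo{E}{[\bA^1/T]}$ via a pushout $\Pic^0(E)_T\sqcup_{[Z\times\Spec k/T]}[Z\times\bA^1/T]$, where $Z\subset E_T$ is the locus over which the universal line bundle $\bigotimes_i\cL_i^{w_i}$ is trivial, and then deduces $p_\ast\cO\simeq\cO_{E_T}$ from the observation that the coarse moduli space is $E_T$. You instead realize $\Mapo{E}{[\bA^1/T]}$ globally as the linear stack $\mathbb{V}_{\Pic^0(E)_T}(R\Pi_\ast\cL_\chi)$ (legitimate because $R\Pi_\ast\cL_\chi$ is a coconnective perfect complex, hence its dual is connective), read off $r_\ast\cO\simeq\Sym\bigl((R\Pi_\ast\cL_\chi)^\vee\bigr)$, and then let $q_\ast$, which takes $T$-invariants along the residual $BT$-gerbe, annihilate every $\Sym^n$ with $n>0$ since these lie in the nonzero weight $-n\chi$. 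The two pictures are consistent: a local Koszul computation identifies $\Sym\bigl((R\Pi_\ast\cL_\chi)^\vee\bigr)$ with $\cO_{E_T}[x]/(fx)$, $f$ a local equation of $Z$, which is exactly the pushout's structure ring. What your route buys is a more transparent final step than the coarse-moduli shorthand, and it scales verbatim to $[\bA^n/T]$. The gap you flag is indeed the one to spell out: that $\Mapo{E}{[\bA^1/T]}$ coincides with the preimage of $\Pic^0(E)_T\subset\Bun_T(E)$. This holds because that preimage is a clopen substack (Lemma \ref{lemma:openinclusion} applied to $\Pic^0(E)_T\hookrightarrow\Bun_T(E)$) which is connected, its fibers being derived affine spaces over the connected base $\Pic^0(E)_T$, and it contains both the constant maps and every map factoring through some $[\Spec k/A]$, since these produce degree-zero $T$-bundles.
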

\begin{proof}
Choosing an isomorphism $T\cong \bG_m^r$, the action in coordinates becomes
$$\lambda\cdot z=\prod_{i=1}^{r}\lambda_i^{w_i}z$$
Recall that the stack $\Mapo{E}{[\bA^1/\bG_m^r]}$ classifies $r$-tuples $\{\cL_i\}_{i=1}^{r}$ of degree zero line bundles on $E$ together with a section $s\in H^0(\otimes_{i=1}^{r} \cL_i^{w_i})$. In particular, we obtain the following description of $\Mapo{E}{[\bA^1/T]}$, when the action is non-trivial. Let $Z$ be the closed subscheme of $E_T$ cut out by the equation
\begin{equation}\label{eq:e}
\prod_{i=1}^{r} e_i^{w_i}=1
\end{equation}
Then we have a push-out diagram
$$
\xymatrix{
[Z\times\Spec k/T]\ar[d]^{0}\ar[r] & \Pic^0(E)_T\ar[d]\\
[Z\times\bA^1/T]\ar[r] & \Mapo{E}{[\bA^1/T]}\\
}
$$
where the left vertical map is the zero section of the projection $[Z\times\bA^1/T]\to[Z/T]$, and $T$ acts trivially on $E_T$ and in particular on $Z$. Note that if $T$ is rank 1,  $Z$ is a finite subset of the torsion points in $E_T$.
In particular, the coarse moduli space of the stack $\Mapo{E}{[\bA^1/T]}$ is given by $E_T$, which implies that 
$$p_\ast\cO_{\Mapo{E}{[\bA^1/T]}} \simeq \cO_{E_T}$$
and this concludes the proof.
\end{proof}

\begin{proposition}\label{proposition:toricopen2}
Let $T$ be an algebraic torus of rank $r$ acting on $\bG_m$.
Let $$p:\Mapo{E}{[\bG_m/T]}\rightarrow E_T$$ be the 
structure map. Let $Z \subset E_T$ be 
the closed susbscheme of $E_T$ cut out by equation \eqref{eq:e}. 
Then, as soon as the action is nontrivial, we have an equivalence 
\begin{equation*}
    \cHH_E([\bG_m/T])\simeq\cO_Z
\end{equation*}
as quasi-coherent sheaves on $E_T$. 
\end{proposition}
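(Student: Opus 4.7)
The strategy parallels the proof of Proposition \ref{proposition:toricopen1}, except that the absence of a zero section in $\mathbb{G}_m$ (compared to $\mathbb{A}^1$) forces the support of the answer to be concentrated on $Z$. The plan is to first describe the functor of points of $\Mapo{E}{[\mathbb{G}_m/T]}$, identify it with a manageable quotient, and then compute the pushforward to $E_T$ in two stages.

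First I would unwind the classifying datum: fixing $T \cong \mathbb{G}_m^r$ and writing the action as $\lambda\cdot z = \prod \lambda_i^{w_i} z$, a map $E \times S \to [\mathbb{G}_m/T]$ corresponding to a point of $\Mapo{E}{[\mathbb{G}_m/T]}(S)$ is the datum of $r$ relative-degree-zero line bundles $\mathcal{L}_i$ on $E \times S$ together with a \emph{nowhere-vanishing} section $s$ of $\bigotimes_i \mathcal{L}_i^{w_i}$. Now I would apply the seesaw principle: a relative degree-zero line bundle on $E \times S \to S$ admits a nowhere-vanishing global section if and only if it is pulled back from $S$. In terms of classifying maps, this is precisely the condition that the induced morphism $S \to E_T$ factors through the closed subscheme $Z$ of equation \eqref{eq:e}. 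Consequently $p$ factors through the closed inclusion $Z \hookrightarrow E_T$.

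Next I would identify the stack with a quotient. Over $Z$, after trivializing $\bigotimes_i \mathcal{L}_i^{w_i}$ the residual datum is the choice of a nowhere-vanishing section, which is a $\mathbb{G}_m$-torsor, and the residual gauge symmetry is the $T$-action on this $\mathbb{G}_m$ via the character $(w_1,\dots,w_r)$. This yields an equivalence
\begin{equation*}
\Mapo{E}{[\mathbb{G}_m/T]} \simeq [Z \times \mathbb{G}_m/T]
\end{equation*}
where $T$ acts trivially on $Z$ and through the character $w$ on $\mathbb{G}_m$. Finally, factor $p$ as $[Z \times \mathbb{G}_m/T] \to Z \hookrightarrow E_T$. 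Since $w$ is a nonzero character, the only $T$-invariant Laurent polynomials on $\mathbb{G}_m$ are the constants, so $R\Gamma([\mathbb{G}_m/T],\mathcal{O}) \simeq k$; the vanishing of higher cohomology is supplied by the reductivity of $T$ in characteristic zero, where the category of $T$-representations is semisimple. Pushing to $Z$ therefore gives $\mathcal{O}_Z$, and pushing further to $E_T$ along the closed immersion yields $\mathcal{O}_Z$ as a sheaf on $E_T$.

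The main obstacle I anticipate is ensuring the equivalence $\Mapo{E}{[\mathbb{G}_m/T]} \simeq [Z \times \mathbb{G}_m/T]$ holds at the derived level, with the correct (possibly non-reduced) derived scheme structure on $Z$ inherited from the defining equation. This should follow either by comparing universal properties directly or by a local computation on a cover, analogous to the one implicit in the pushout diagram used in Proposition \ref{proposition:toricopen1}.
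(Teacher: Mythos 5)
Your proof is correct and takes essentially the same route as the paper's own proof, which is very terse: the paper simply remarks that the argument parallels Proposition \ref{proposition:toricopen1}, records the modular description of $[\bG_m/T]$ as classifying degree-zero line bundles $\cL_i$ together with a trivialization of $\bigotimes_i\cL_i^{w_i}$, and concludes that $\Mapo{E}{[\bG_m/T]}$ sits over $Z$. You make explicit the identification $\Mapo{E}{[\bG_m/T]}\simeq [Z\times\bG_m/T]$ and the computation of the pushforward via $T$-invariants, both of which the paper leaves implicit; this is correct and useful detail.

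Two small points to tighten. First, your seesaw statement should not be phrased as an ``if and only if'': a relative degree-zero line bundle on $E\times S\to S$ that is pulled back from $S$ need not admit a global nowhere-vanishing section, unless the bundle on $S$ is itself trivial. What you actually use, and what is true, is the forward direction: a nowhere-vanishing section forces triviality on each fiber, hence forces the classifying map $S\to E_T$ to land in $Z$. This is all that is needed to conclude that $p$ factors through $Z$. Second, the worry at the end about the derived structure on $Z$ is unnecessary: the equation $\prod_i e_i^{w_i}=1$ exhibits $Z$ as the fiber over the identity of the group homomorphism $E_T\to E$ given by $w$, which is smooth as soon as $w\neq 0$, so $Z$ is a smooth classical subscheme of $E_T$ and carries no derived correction.
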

\begin{proof}
The proof of this proposition is the same as the proof of Proposition \ref{proposition:toricopen1}, and in fact the geometry is simpler. The stack $\Mapo{E}{[\bG_m/T]}$ classifies 
 $r$-tuples $\{\cL_i\}_{i=1}^{r}$ of degree zero line bundles on $E$, and \emph{trivializations} of the tensor product 
\begin{equation}
\label{triv}
 \bigotimes_{i=1}^{r} \cL_i^{w_i} \simeq \cO_E
\end{equation}
 where the $w_i$-s are the weights of the action. Thus  the stack 
 $\Mapo{E}{[\bG_m/T]}$ sits over the locus $Z \subset E_T$ of those bundles satisfying  (\ref{triv}).
\end{proof}
 
\subsection{The case of maximal tori}
In this subsection we compute the equivariant elliptic Hochschild homology of smooth toric varieties equipped with the toric action of their maximal torus.
In particular, we prove the following theorem:
\begin{theorem}\label{theorem:ellipticohoftoricvar}
Let $X$ be a smooth (normal) toric variety over $\bC$, and let the maximal torus $T$ act on $X$ either with the standard toric action or with weights $\{w_1,\dots,w_n\}$ all non-zero. Then there is an isomorphism of coherent sheaves on $E_T$
\begin{equation*}
\cHH_E([X/T])\simeq\El_T^0(X^\an)
\end{equation*}
where $X^\an$ denotes the analytification of $X$, and $\El_T^0(X^\an)\simeq\pi_0\El_T(X^\an)$ is 0-th homotopy sheaf of the complexified $T$-equivariant elliptic cohomology of $X^\an$ as defined in Section \ref{subsection:PerlimGroj}. 
\end{theorem}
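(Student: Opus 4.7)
The plan is to establish the equivalence by applying $T$-equivariant Zariski codescent on both sides and reducing to an explicit local computation on each member of the canonical toric affine cover of $X$.

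Recall that any smooth toric variety $X$ with maximal torus $T$ admits a canonical $T$-equivariant affine open cover $\{U_\sigma\}$ indexed by the maximal cones $\sigma$ of the fan of $X$, with each $U_\sigma$ $T$-equivariantly isomorphic to $\bA^l \times \bG_m^{n-l}$ for appropriate $l$, where the $T$-action is diagonal with weights read off the cone data. The intersections of these opens are again affine toric varieties of the same form. Applying Theorem \ref{proposition:codescentnormalvar} to this cover gives
\begin{equation*}
\cHH_E([X/T]) \;\simeq\; \varprojlim_{\sigma} \cHH_E([U_\sigma/T])
\end{equation*}
taken over the \v{C}ech nerve of the cover. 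By iterated application of the K\"unneth formula (Proposition \ref{remark:Kunneth}) together with Propositions \ref{proposition:toricopen1} and \ref{proposition:toricopen2}, each local piece $\cHH_E([U_\sigma/T])$ is identified with the structure sheaf $\cO_{Z_\sigma}$ of an explicit closed subscheme $Z_\sigma \subset E_T$ cut out by equations of the form $\prod_i e_i^{w_i} = 1$, one for each $\bG_m$-factor of $U_\sigma$ with its associated weight $w$.

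I would then perform the parallel calculation on the Grojnowski side. Grojnowski's theory is built by gluing Borel equivariant cohomology of fixed loci $X^{T(e)}$ over sufficiently small analytic neighborhoods of points of $E_T$; since Borel cohomology satisfies $T$-equivariant Mayer--Vietoris for Zariski covers of the analytification, the theory $\El_T^0(-^\an)$ inherits $T$-equivariant Zariski codescent. A direct local computation then shows that for each $U_\sigma$, the analytic fixed locus $(U_\sigma^\an)^{T(e)}$ is either empty (when the relations $\prod_i e_i^{w_i} = 1$ fail at $e$) or a contractible affine space (when they hold); in the latter case Borel cohomology collapses to that of a point, and Grojnowski's local recipe produces precisely $\cO_{Z_\sigma}$. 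Matching these local data via the same gluing pattern yields the global equivalence $\cHH_E([X/T]) \simeq \El_T^0(X^\an)$.

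The main obstacle is reconciling the two descent patterns, since $\cHH_E$ is built algebraically over $E_T$ as a scheme, while Grojnowski's theory lives analytically over $E_T^\an$. I would bridge this via GAGA, using that $E_T$ is projective so that coherent analytic sheaves on $E_T^\an$ correspond canonically to coherent algebraic sheaves on $E_T$, and that the analytification functor sends the algebraic \v{C}ech nerve to the analytic one. As a technically cleaner alternative, one may verify the equivalence by comparing completions at closed points of $E_T$ using Theorem \ref{theorem:completionsgeneral} on the algebraic side and Grojnowski's definition on the analytic side, noting that for smooth toric varieties the fixed loci under subtori are again smooth toric, so Hochschild and Borel cohomology match on the nose without needing to pass to Tate fixed points, thanks to the Hodge-theoretic purity enjoyed by toric varieties.
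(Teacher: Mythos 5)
Your main argument follows the paper's route exactly: you invoke $T$-equivariant Zariski codescent (Theorem~\ref{proposition:codescentnormalvar}) to reduce to the toric affine cover, use the K\"unneth formula together with Propositions~\ref{proposition:toricopen1} and \ref{proposition:toricopen2} to identify each local piece, match with the Mayer--Vietoris descent on the Grojnowski side, and conclude by equality of the two \v{C}ech cosimplicial objects. This is precisely how the paper proves Theorem~\ref{theorem:ellipticohoftoricvar}.

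One imprecision in your local description on the Grojnowski side: for $U_\sigma \cong \bA^l \times \bG_m^{n-l}$, the $T(e)$-fixed locus, when non-empty, is of the form $\bA^k \times \bG_m^{n-l}$, which is \emph{not} contractible as soon as $n > l$, so it is not true that ``Borel cohomology collapses to that of a point.'' What actually happens is that the $\bG_m^{n-l}$-factor is a free $T$-orbit, so $[\bA^k \times \bG_m^{n-l}/T] \simeq [\bA^k/T']$ for a subtorus $T'$ of rank $l$, and $H^*_T$ of the fixed locus is $H^*(BT')$; Grojnowski's recipe then produces the structure sheaf of the translate of $E_{T'}$ inside $E_T$, which is the component of $Z_\sigma$ through $e$. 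The conclusion you want is correct, but the intermediate claim about contractibility is not, and a reader following it literally would be misled about why $\bG_m$-factors contribute. Your ``technically cleaner alternative'' via Theorem~\ref{theorem:completionsgeneral} and completion-wise comparison is not what the paper does here and would need the adelic gluing package deferred to \cite{MapStII} to yield a global statement; moreover the assertion that Hochschild and Borel cohomology of fixed loci ``match on the nose'' for toric pieces needs care once non-proper $\bG_m$-factors appear (e.g. $\mathrm{HH}_*(\bG_m)$ is infinite-dimensional while $H^*(\bC^*)$ is not), so it should not be presented as a drop-in replacement for the cover-by-cover comparison.
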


\begin{proof}[Proof of theorem \ref{theorem:ellipticohoftoricvar}]
As $X$ is a toric variety, we can find a $T$-equivariant open cover $\cU$ by products $\bA^n\times\bG_m^k$. We assume for simplicity that $k=0$ as, by the K\"unneth formula, the case $k\neq 0$ can be treated in a very similar way. 
By the codescent property we know that the natural map
\begin{equation*}
\coprod_{U_i\in\cU}\Mapo{E}{[U_i/T]}\rightarrow\Mapo{E}{[X/T]}
\end{equation*}
induced by the cover $\cU$ is an effective epimorphism, and that the functor
$$
\Mapo{E}{[-/T]}: \Zar{X}^T \rightarrow\dStk
$$
is a cosheaf, as explained in theorem \ref{proposition:codescentnormalvar}. This implies that the functor
\begin{equation*}
\cHH_E([-/T]): \Zar{X}^T \rightarrow\Qcoh(E_T)
\end{equation*}
is a sheaf, and in particular $\cHH_E([X/T])$ is obtained as the totalization of the cosimplicial object $\cHH_E([\cU_\bullet/T])$, where $\cU_\bullet$ is the Cech nerve of the cover $\cU$.
Similarly, $T$-equivariant elliptic cohomology satisfies Mayer-Vietoris which implies that we can compute the quasi-coherent sheaf $\El_T^0(X)$ as the totalization of the cosimplicial object $\El_T^0(\cU_\bullet)$. 
Propositions \ref{proposition:toricopen1} and \ref{proposition:toricopen2} imply that this two cosimplicial objects coincide, hence they have the same totalization. 
\end{proof}

\section{Equivariant Elliptic Hochschild Homology}\label{section:HELL}
In this section we study the local behaviour of  equivariant elliptic Hochschild homology $$\cHH_E([X/T])$$ when $X$  is a smooth variety over $k$ equipped with an action of $T$. We relate this to Hochschild homology and equivariant elliptic cohomology. 
In particular, this involves completing the quasi-constant maps at the constant maps and comparing this   with the completion of the loop space at the constant loops. A localization phenomenon, combined with the group structure on the elliptic curve $E$, allows for the computation of the completions over all closed points of $E_T$. Our main results in this section are  analogues of theorems established by Chen in \cite{HChen}  in the setting of  ordinary Hochschild homology. 

First, we need the following definition.
\begin{definition}
Let $G$ be a derived group scheme acting on a derived stack $X$. The \emph{derived fixed locus} of the $G$-action  is the following fiber product of derived stacks 
$$
\xymatrix{
X^G \ar[r] \ar[d] & X\times G \ar[d]^{(\pi,a)} \\
X \ar[r]^{\Delta}  & X\times X} 
$$
where $\pi$ is the projection and $a$ is the action map. 
\end{definition}
From now on all fixed loci are assumed to be derived. 

\subsection{The localization formula for quasi-constant maps}\label{subsection:locformula}
In this section we establish a localization theorem for quasi-constant maps. This is an analogue of Theorem 3.1.12 in \cite{HChen}. 

\begin{theorem}[Localization formula]\label{proposition:localization}
Let $X$ be a smooth variety over $k$ equipped with an action of an algebraic torus $T$. Then for any closed point $e\in E_T$ there exists a Zariski open set $U\subset E_T$ containing $e$ such that the natural map
\begin{equation}\label{eq:locmap}
\Mapo{E}{[t_0 X^{T(e)}/T]}\times_{E_T} U\rightarrow\Mapo{E}{[X/T]}\times_{E_T} U
\end{equation}
induced by the inclusion of the classical fixed locus $t_0 X^{T(e)}\rightarrow X$, is an equivalence. 
\end{theorem}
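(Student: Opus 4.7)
\emph{Proof plan.} The strategy is to check the statement pointwise, reducing to the linear model $X=\mathbb{A}^N$ via $T$-equivariant Zariski codescent and the local structure of smooth $T$-varieties, and then to invoke the explicit computations of Section \ref{section:mainthmproof}. First, $t_0X^e \hookrightarrow X$ is a closed immersion, so by Lemma \ref{lemma:openinclusion} the induced map \eqref{eq:locmap} is a closed immersion of derived stacks over $E_T$; to show it becomes an equivalence after pullback to $U$, it suffices by the pointwise criterion of Proposition \ref{proposition:pointwisecriterion2} to check that every geometric point of $\Mapo{E}{[X/T]}\times_{E_T}U$ factors through $[t_0X^e/T]$.

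I first dispose of the linear case $X=\mathbb{A}^N$ with $T$ acting linearly by weights $w^1,\ldots,w^N$. A geometric $K$-point of $\Mapo{E}{[\mathbb{A}^N/T]}$ over $u\in E_T(K)$ corresponds to the $T$-bundle $P_u$ on $E_K$ classified by $u$ together with a $T$-equivariant morphism $P_u\to\mathbb{A}^N$, equivalently an $N$-tuple of sections $s_j \in H^0(E_K,\cL_u^{w^j})$, where $\cL_u^{w^j} = \bigotimes_i \cL_i^{w^j_i}$ is the degree-zero line bundle on $E_K$ associated to the character of weight $w^j$. Each $\cL_u^{w^j}$ is trivial precisely when $u$ lies on the closed subscheme
$$
Z_j = \Big\{ u\in E_T : \prod_i u_i^{w^j_i}=1 \Big\},
$$
and otherwise is a nontrivial degree-zero line bundle on the elliptic curve $E_K$, hence has no nonzero global sections (cf.\ Proposition \ref{proposition:toricopen1}). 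Since $(\mathbb{A}^N)^{T(e)}$ is the coordinate subspace spanned by the directions with $e\in Z_j$, setting
$$
U \;=\; E_T \setminus \bigcup_{\,j\,:\,e\notin Z_j\,} Z_j
$$
yields a Zariski open neighborhood of $e$ over which every such map is forced to have $s_j=0$ in every non-fixed direction, hence to factor through $[(\mathbb{A}^N)^{T(e)}/T]$. This settles the linear case.

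The main obstacle is to globalize from the linear model. Since $X$ is smooth and hence normal, Sumihiro's theorem produces a $T$-equivariant affine open cover $\{V_\alpha\}$ of $X$, and the weight-space decomposition of a finitely generated $T$-module then supplies, for each $\alpha$, a $T$-equivariant closed embedding $V_\alpha \hookrightarrow \mathbb{A}^{N_\alpha}$ into a linear $T$-representation. Because this embedding is $T$-equivariant and $T(e)$ is linearly reductive, one has $t_0V_\alpha^{T(e)} = V_\alpha \cap (\mathbb{A}^{N_\alpha})^{T(e)}$ as classical schemes; the linear case then shows that every geometric point of $\Mapo{E}{[V_\alpha/T]}\times_{E_T}U_\alpha$ factors through $[t_0V_\alpha^{T(e)}/T]$, with $U_\alpha$ the neighborhood of $e$ built as above from the ambient $\mathbb{A}^{N_\alpha}$. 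Applying $T$-equivariant Zariski codescent (Theorem \ref{proposition:codescentnormalvar}) to both sides of \eqref{eq:locmap} then assembles these local factorizations over the intersection $U = \bigcap_\alpha U_\alpha$, which is Zariski open and contains $e$. A subsidiary point is compatibility of the formation of $t_0X^e$ with $T$-equivariant Zariski restriction on $X$, which follows from the smoothness of $X^e$ guaranteed by linear reductivity of $T(e)$ acting on the smooth variety $X$.
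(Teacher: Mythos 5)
Your proposal identifies the correct linear model and the correct open set $U$, matching the paper's Lemma~\ref{lemma:Anlocalization}, but the mechanism for promoting the pointwise statement to an equivalence is broken, and this is exactly the hard part of the theorem.

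The pointwise criterion, Proposition~\ref{proposition:pointwisecriterion2}, is stated for \emph{open} immersions: an open immersion of geometric stacks that hits every field-valued point is an effective epimorphism and hence an equivalence. Your map~\eqref{eq:locmap} is a \emph{closed} immersion (via Lemma~\ref{lemma:openinclusion}), and a closed immersion that is a bijection on geometric points need not be an equivalence — it can be a nilpotent or derived thickening. So the sentence ``to show it becomes an equivalence after pullback to $U$, it suffices by the pointwise criterion\ldots'' is not justified. The paper's proof spends most of its effort showing that the map is \emph{\'etale}: it is locally finitely presented, and its relative cotangent complex vanishes. Once the map is known to be both \'etale and a closed immersion, it is an open (in fact clopen) immersion, and only then does the pointwise comparison of Proposition~\ref{remark:bijonpoints} finish the job. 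That \'etaleness is precisely what excludes the infinitesimal discrepancies your argument leaves open.

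Your attempted substitute for this step also fails. You replace Luna's \'etale slice theorem by a $T$-equivariant \emph{closed} embedding $V_\alpha\hookrightarrow\mathbb{A}^{N_\alpha}$ (via Sumihiro and a weight decomposition). But to deduce the equivalence for $V_\alpha$ from the one for $\mathbb{A}^{N_\alpha}$, you would need the square
\begin{equation*}
\xymatrix{
\Mapo{E}{[t_0 V_\alpha^{T(e)}/T]}\times_{E_T}U \ar[r]\ar[d] & \Mapo{E}{[V_\alpha/T]}\times_{E_T}U\ar[d]\\
\Mapo{E}{[(\mathbb{A}^{N_\alpha})^{T(e)}/T]}\times_{E_T}U \ar[r] & \Mapo{E}{[\mathbb{A}^{N_\alpha}/T]}\times_{E_T}U
}
\end{equation*}
to be a pullback. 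Since $\Mapo{E}{-}$ preserves limits, this reduces to asking that $t_0 V_\alpha^{T(e)}$ agree with the \emph{derived} fiber product $V_\alpha\times_{\mathbb{A}^{N_\alpha}}(\mathbb{A}^{N_\alpha})^{T(e)}$. Your linear-reductivity observation gives the equality of classical truncations, but the derived intersection is generically non-classical because the two closed subschemes of $\mathbb{A}^{N_\alpha}$ need not meet transversally (e.g.\ $V_\alpha=\{y=0\}\subset\mathbb{A}^2$ with $T(e)$ of weights $(1,1)$: the derived intersection with $\{0\}$ has a nonzero $\pi_1$). Luna's slice theorem is used in the paper precisely because it produces an \emph{\'etale} comparison map whose relative cotangent complex vanishes — that is what lets one transport \'etaleness of $\phi$ from the linear model to $X$, and a closed embedding into affine space cannot play the same role. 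The global step of your argument therefore does not close.
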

In the above, $T(e)$ is the subgroup of $T$ as in Definition \ref{definition:T(x)}.
To prove this theorem we need to establish some preliminary results first.

\begin{lemma}[Localization for the affine space]
\label{lemma:Anlocalization}
Consider the $n$-dimensional affine space $\bA^n$, equipped with an action of a torus $T$ of rank $k$. Fix a closed point $e$ in $E_T$. Then there exists a Zariski open $U(e)$ in $E_T$ such that the natural map
\begin{equation*}
\phi:\Mapo{E}{[t_0 (\bA^{n})^{T(e)}/T]}\times_{E_T} U(e)\rightarrow\Mapo{E}{[\bA^n/T]}\times_{E_T} U(e)
\end{equation*}
induced by the inclusion of the classical fixed locus $t_0 (\bA^n)^{T(e)}\rightarrow \bA^n$ is an equivalence.
\end{lemma}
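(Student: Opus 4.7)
First I would diagonalize the $T$-action on $\bA^n$: up to a change of coordinates, one can assume it takes the standard form
$$(\lambda_1, \ldots, \lambda_k) \cdot (z_1, \ldots, z_n) = \Big(\prod_{i=1}^{k} \lambda_i^{w^1_i} z_1, \ldots, \prod_{i=1}^{k} \lambda_i^{w^n_i} z_n\Big),$$
for weight vectors $w^1, \ldots, w^n$. For each coordinate $j$ let $Z_j \subset E_T$ denote the closed subscheme defined by the equation $\prod_i e_i^{w^j_i} = 1$ appearing in \eqref{eq:e}, with the convention $Z_j := E_T$ if $w^j = 0$. Unwinding the definition of $T(e)$ one checks that $e \in Z_j$ if and only if the character $w^j$ vanishes on $T(e) \subset T$. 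Writing $J := \{j : e \in Z_j\}$, the derived fixed locus $(\bA^n)^e$ is already classical and coincides with the coordinate subspace $\bA^J := \{z_j = 0 \mbox{ for } j \notin J\} \cong \bA^{|J|}$. Since $e \notin Z_j$ for every $j \notin J$, I pick a Zariski-open $U \subset E_T$ with $e \in U$ and $U \cap Z_j = \emptyset$ for all $j \notin J$.

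Next, for each $j$ let $\cM_j := \Mapo{E}{[\bA^1/T]}$, where $T$ is understood to act on $\bA^1$ through the weight $w^j$. Applying the K\"unneth formula (Proposition \ref{remark:Kunneth}) iteratively to $\bA^n \simeq \bA^1 \times \cdots \times \bA^1$ with the diagonal $T$-action yields a decomposition
$$\Mapo{E}{[\bA^n/T]} \simeq \cM_1 \times_{\Pic^0(E)_T} \cdots \times_{\Pic^0(E)_T} \cM_n,$$
and a parallel decomposition $\Mapo{E}{[t_0 (\bA^n)^e/T]} \simeq \Mapo{E}{[\bA^J/T]}$ involving only the factors $\cM_j$ with $j \in J$. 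Under these identifications the map \eqref{eq:locmap} becomes the forgetful map that drops the factors with $j \notin J$. The Lemma therefore reduces to showing that, after base change to $U$, each dropped factor $\cM_j$ is equivalent to $\Pic^0(E)_T \times_{E_T} U$, and so collapses trivially in the fiber product.

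To establish this I would invoke the pushout description of Proposition \ref{proposition:toricopen1}, which realizes $\cM_j$ as the pushout of $\Pic^0(E)_T$ and $[Z_j \times \bA^1/T]$ along $[Z_j/T]$ (via the inclusion $[Z_j/T] \hookrightarrow \Pic^0(E)_T$ and the zero section respectively). Since colimits commute with base change in an $\infty$-topos and $Z_j \cap U = \emptyset$ for $j \notin J$, both $[Z_j/T] \times_{E_T} U$ and $[Z_j \times \bA^1/T] \times_{E_T} U$ are empty, whence
$$\cM_j \times_{E_T} U \simeq \Pic^0(E)_T \times_{E_T} U \qquad \text{for every } j \notin J.$$
Substituting into the K\"unneth expression yields the equivalence stated in the Lemma, with the induced map being the one coming from the inclusion $t_0(\bA^n)^e \hookrightarrow \bA^n$.

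The only step requiring a bit of care is commuting the iterated fiber products over $\Pic^0(E)_T$ with base change to $U \subset E_T$; this is immediate because $\Pic^0(E)_T \simeq E_T \times BT$ is a trivial $BT$-gerbe over $E_T$ and finite limits commute with base change. Granted Propositions \ref{remark:Kunneth} and \ref{proposition:toricopen1}, no genuine obstacle arises: the argument is essentially geometric bookkeeping with respect to the partition $\{1, \ldots, n\} = J \sqcup J^c$.
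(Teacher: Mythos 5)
Your proof is correct, and it follows the same underlying geometric idea as the paper's, but it implements it more carefully. The paper's argument is brief and somewhat informal: after noting that the classical fixed locus is a coordinate subspace $\bA^p$, it asserts that the two mapping stacks ``differ only over'' the locus of $f \in E_T$ where more than $p$ of the bundles $\bigotimes_i \cL_i^{w_i^j}$ become trivial, and then takes $U$ to be the complement. You instead decompose $\bA^n$ as a product of lines, apply the K\"unneth formula (Proposition \ref{remark:Kunneth}) to write $\Mapo{E}{[\bA^n/T]}$ as an iterated fiber product of the one-dimensional stacks $\cM_j$ over $\Pic^0(E)_T$, and then invoke the explicit pushout presentation of each $\cM_j$ from Proposition \ref{proposition:toricopen1} together with universality of colimits to show that each factor with $j \notin J$ collapses to $\Pic^0(E)_T\times_{E_T}U$ after base change. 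This gives the derived equivalence directly, with no need to check anything pointwise, and makes transparent that $\phi$ is induced by the zero sections in the discarded coordinates. Your choice of $U$ (the complement of $\bigcup_{j \notin J} Z_j$) is also pinned down more precisely than the paper's count-based description: the latter, read literally, can include points $f$ lying in some $Z_{j'}$ with $j' \notin J$ but outside enough of the $Z_j$ with $j \in J$ that the count stays at $p$, and over such points the map $\phi$ is not surjective on sections. Your argument avoids this issue, so it serves as a cleaner and fully rigorous justification of the lemma.
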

\begin{proof}
In the case of $[\bA^n/T]$ we can describe explicitly the open $U(e)$. First of all, observe that, since the action of $T$ on $\bA^n$ is linear, the fixed loci will be a linear subspace, in particular there exists a natural number $p$ such that $t_0(\bA^n)^{T(e)} \cong \bA^p$.
We distinguish two situations:
\begin{enumerate}
	\item $e=1$ in $E_T$. In this case, the statement is true for any $U(e)$;
	\item $e\neq 1$. In this case, the open set $U(e)$ can be described in terms of the weights $\{w_i^j\}$ of the action. 
\end{enumerate}
Let us focus on the second case.
Recall that the stack $[\Spec k/T]$ classifies principal $T$-bundles or equivalently $k$-tuples of line bundles $\{\cL_i\}_{i=1}^{k}$, while $[\bA^n/T]$ classifies such $k$-tuples $\{\cL_i\}_{i=1}^{k}$ together with an $n$-tuple of sections $s_j\in H^0(\otimes_{i=1}^{k} \cL_i^{w_i^j})$ for all values of $j$ in $\{1,\dots,n\}$. Since degree zero bundles on elliptic curves have sections if and only if they are trivial, the stacks $\Mapo{E}{[\bA^n/T]}$ and $\Mapo{E}{[\bA^p/T]}$ will differ only over the locus of those points $f=(f_1,\dots,f_n)$ in $E_T$ such that $\Sigma_{i=1}^{k}f_i^{w_i^j}=0$ for more than $p$ values of $j$ in $\{1,\dots,n\}$. This is because, for points $f$ of this kind, there will be more then $p$ line bundles of the form $\otimes_{i=1}^{k}\cL_i^{w_i^j}$ (indexed by $j$) that admit non-vanishing sections, hence $n$-tuples of sections may differ from $p$-tuples of sections.
Then, the open $U(e)$ is defined by removing from $E_T$ the locus of the points $f$ having this property. 
\end{proof}

\begin{remark}
The lemma above can be viewed as a statement about deformation theory of bundles with sections on elliptic curves. In particular, it is possible to compute the relative cotangent complex of the map $$\Mapo{E}{[t_0 (\bA^{n})^{T(e)}/T]}\rightarrow\Mapo{E}{[\bA^n/T]}$$ Its vanishing on closed points lying over the Zariski open $U(e)$ depends on the fact that nontrivial degree zero line bundles on $E$ have no non-zero sections.
\end{remark}
\subsubsection{The localization theorem on closed points}
\label{remark:bijonpoints}
Before proceeding with the proof of Theorem \ref{proposition:localization}, we will explain why the statement is true on closed points. This will clarify the geometry underlying Theorem \ref{proposition:localization}. Further the partial results we will obtain in this section will actually be needed in the course of the proof of Theorem \ref{proposition:localization}, which we will present in the next section.

In section \ref{section:proofcodescent} we have shown  that the derived stack of quasi-constant maps from $E$ satisfies a codescent property with respect to equivariant Zariski open covers. We  proved this by showing that the images of the total spaces of principal $T$-bundles are always contained inside $T$-orbits in the target space $X$. This property of quasi-constant maps allows us to show that the map \eqref{eq:locmap} is a homotopy equivalence on geometric points, as the type of $T$-orbit selects which bundles are allowed to map to it. 

\begin{proposition}[Localization formula on geometric points]\label{remark:bijonpoints}
Let $X$ be a smooth variety over $k$ equipped with an action of an algebraic torus $T$. Then for any closed point $e\in E_T$ there exists a Zariski open set $U\subset E_T$ containing $e$ such that the natural map
\begin{equation}\label{eq:locmap}
\Mapo{E}{[t_0 X^{T(e)}/T]}\times_{E_T} U   \rightarrow\Mapo{E}{[X/T]}\times_{E_T} U
\end{equation}
is a homotopy equivalence on geometric points.
\end{proposition}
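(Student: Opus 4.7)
The plan is to exploit the orbit-factorization property of quasi-constant maps established in Section \ref{section:proofcodescent} together with a careful, combinatorial, choice of $U$ based on the subtori of $T$ arising as stabilizers of points of $X$.

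First I would note that the $T$-equivariant closed immersion $t_0 X^e \hookrightarrow X$ induces, by Lemma \ref{lemma:openinclusion}, a closed immersion of quasi-constant mapping stacks, which is thus injective on $K$-points for every field $K$. So the task reduces to showing surjectivity on $K$-points over $U$. A $K$-point of $\Mapo{E}{[X/T]}$ is a quasi-constant map $\varphi \colon E_K \to [X/T]$, and by the orbit-factorization property underlying the codescent result Theorem \ref{proposition:codescentnormalvar} (compare Proposition \ref{connectedstab}), the image of $\varphi$ is contained in a single $T$-orbit $O \subset X$. Fixing a point $x \in O$ with stabilizer $T_x \subset T$, the map factors as $\varphi \colon E_K \to [O/T] \simeq [\Spec k/T_x] \to [X/T]$; in particular its projection $p(\varphi)$ to $E_T$ factors through the subscheme $E_{T_x} \subset E_T$ obtained from the inclusion $T_x \hookrightarrow T$.

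The choice of $U$ now rests on the finiteness input that, for a torus acting on a scheme of finite type, only finitely many subgroups of $T$ occur as stabilizers of points of $X$ (a consequence, for instance, of the Luna stratification). Let $\{T_\alpha'\}$ be the finite collection of such stabilizers whose identity components do not contain $T(e)$, and set
\begin{equation*}
U \;:=\; E_T \setminus \bigcup_\alpha E_{T_\alpha'}.
\end{equation*}
This is Zariski open, and $e \in U$ because the relation $e \in E_{T'}$ forces $T(e) \subset T'$ by the very definition of $T(e)$. Now, if $p(\varphi) \in U$, then $T_x$ cannot be any of the $T_\alpha'$; hence $T(e)$ is contained in the identity component of $T_x$, so $T(e)$ fixes every point of $O$, giving $O \subset X^{T(e)} = t_0 X^e$ (the classical and derived fixed loci share the same underlying classical points). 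Therefore $\varphi$ factors through $\Mapo{E}{[t_0 X^e/T]}$, which completes the surjectivity claim.

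The principal obstacle is the rigorous verification of the two inputs: the orbit-factorization property of quasi-constant maps, which requires a careful appeal to the codescent machinery of Section \ref{section:proofcodescent} and the Sumihiro-type equivariant local embeddings used there; and the finiteness of the set of stabilizers that arise, which one may deduce from the Luna stratification or from constructibility of the stabilizer map. Once these inputs are secured, the remaining combinatorial step identifying $U$ is essentially formal, and the resulting open set is compatible with the more refined analysis (including vanishing of the relative cotangent complex) needed for the full Theorem \ref{proposition:localization}.
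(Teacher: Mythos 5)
Your proof follows essentially the same outline as the paper's: orbit-factorization from the codescent machinery, identification of quasi-constant maps to a single orbit $T/T_x$ with degree-zero $T$-bundles admitting a reduction of structure group to $T_x$, and then a combinatorial choice of $U$ governed by the finitely many subgroups of $T$ arising as stabilizers. You are commendably more explicit than the paper about the finiteness input and the exact definition of $U$, which is a genuine improvement since the paper's proof is vague on this point.

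There is, however, a small but real gap in the combinatorial bookkeeping. You define the ``bad'' set $\{T_\alpha'\}$ as those stabilizers whose \emph{identity components} fail to contain $T(e)$, yet you remove from $E_T$ the subschemes $E_{T_\alpha'}$ associated to the \emph{full} subgroups $T_\alpha'$. These two choices are incompatible precisely when $T(e)$ or some $T_\alpha'$ has nontrivial finite part, which happens exactly at the torsion points of $E_T$ --- the interesting locus. Concretely, take $T=\bG_m$, $T(e)=\mu_2$, and $T_\alpha'=\mu_4$: then $(T_\alpha')^0 = \{1\}$ does not contain $\mu_2$, so $T_\alpha'$ lands in your bad list, but $e\in E[2]\subset E[4] = E_{T_\alpha'}$, so $e\notin U$, contradicting your assertion that $e\in U$. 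Your justification ``$e\in E_{T'}$ forces $T(e)\subset T'$'' only yields $T(e)\subset T_\alpha'$, not $T(e)\subset (T_\alpha')^0$, which is what the bad-list criterion requires. The correct criterion is simply $T(e)\not\subset T_\alpha'$ (the full group). With that replacement, $e\in U$ follows by contraposition, and in the final step ``$T_x$ is not bad'' gives directly $T(e)\subset T_x$, hence $T(e)$ fixes every point of $O$, so $O\subset t_0 X^e$; the appeal to identity components is both unnecessary and incorrect. Once this is fixed, your argument matches the paper's intended argument and is in fact written more carefully than the paper's own somewhat loose wording (``orbits of the form $T/T(e)$'', ``reduction to a subgroup of $T(e)$'').
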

\begin{proof}
Let us choose a $T$-orbit in $X$, $O$, generated by a closed point $x$ with stabilizer $T_x$. Then considering the mapping stack to $O\simeq T/T_x$ we obtain
\begin{equation*}
\Mapo{E}{[O/T]}\simeq\Mapo{E}{[\Spec k/T_x]}
\end{equation*}
which is the classifying stack of degree zero $T_x$-bundles on $E$. Hence, we conclude that only the principal $T$-bundles that admit a reduction of the structure group from $T$ to $T_x$ are allowed to map to the orbit $O$. In particular, as the subscheme $t_0 X^{T(e)}$ is a union of orbits of the form $T/S$, where $S$ is a subgroup of $T$ containing $T(e)$, the bundles that admit a reduction of the structure group to $T(e)$ are allowed to map to the complement of $t_0(X^{T(e)})$. 
In order to have that the map \eqref{eq:locmap} is a homotopy equivalence on $K$-points for an algebraically closed field $K$, we need to remove maps from those bundles. To do so, it is sufficient to remove the bundles that admit a reduction of the structure group to a subgroup of $T(e)$, and this is implemented by restricting the mapping stack to a Zariski open $U$ of $E_T$.
\end{proof}
 
\subsubsection{The proof}
We are now ready to prove Theorem  \ref{proposition:localization}. The key ingredient in the proof is Luna's slice theorem \cite{Luna}, which allows us to reduce to the case of a linear action on affine space, which was treated in Lemma \ref{lemma:Anlocalization}. 
\begin{proof}[Proof of Theorem  \ref{proposition:localization}] 
By codescent we can assume $X$ is affine. Our goal is to prove that the map
\begin{equation}\label{eq:mapproof}
\phi:\Mapo{E}{[t_0 X^{T(e)}/T]}\times_{E_T} U\rightarrow\Mapo{E}{[X/T]}\times_{E_T} U
\end{equation}
is an equivalence, for a Zariski open subset $U$ of $E_T$. We choose a $U$ that makes Proposition \ref{remark:bijonpoints} hold.

First we prove that this map is étale. Choose a point $x$ of $t_0(X^{T(e)})$ such that the orbit $Tx$ is closed. The Luna slice theorem applied to $x$ gives us a locally closed smooth subvariety $V$ of $X$ closed under the action of the stabilizer $T_x$ of $x$ such that the natural $T$-equivariant map $\psi: T\times^{T_x}V\rightarrow X$ is \'etale and has image given by a Zariski open $Z$ of $X$. We have an induced commutative diagram
$$
\xymatrix{
\Mapo{E}{[t_0(T\times^{T_x}V)^{T(e)}/T]}\times_{E_T}U \ar[r]^-{\phi_V} \ar[d]^{i_e} & \Mapo{E}{[T\times^{T_x}V/T]}\times_{E_T}U\ar[d]^{i}\\
\Mapo{E}{[t_0(X)^{T(e)}/T]}\times_{E_T}U \ar[r]^-{\phi} & \Mapo{E}{[X/T]}\times_{E_T}U
}
$$
for the mapping stacks. First note that the map \eqref{eq:mapproof} is locally finitely presented (see Remark \ref{locallyfinpreseq}), hence we only need to show it is formally étale, i.e. its relative cotangent complex vanishes.
To do so, we first observe that the vertical maps have vanishing relative cotangent complex. The argument is the same for the left and the right one. As for the right vertical composition, choose an $S$-point $$x:\Spec S\rightarrow\Mapo{E}{[T\times^{T_x}V/T]}$$ 
We need to show that the relative cotangent vanishes at any such $S$-point.  
We apply Halpern-Leistner and Preygel's Proposition 5.1.10 in \cite{HLP}:
$$\bL_{\Map{X}{Y},f}\simeq \pi_+ f^\ast\bL_{Y}$$
where the $S$-point $f:\Spec S\rightarrow\Map{X}{Y}$ is viewed as a map $f:\Spec S\times X\rightarrow Y$ and $\pi_+$ is a left adjoint to the pullback along the projection $\pi:\Spec S\times X\rightarrow\Spec S$. 

The pullback $x^\ast\bL_{i}$ is
$$x^\ast\bL_{i}\simeq \pi_+x^\ast\bL_{\psi}\simeq 0$$
as Luna's slice theorem guarantees that the map $\psi$ is étale.
In particular, we obtain that the map $i$ (and similarly $i_e$) is formally étale. One consequence of this fact is that we have an equivalence 
\begin{equation}\label{eq:eqv}
i_e^\ast\bL_{\phi}\xrightarrow{\simeq}\bL_{\phi_V}
\end{equation}
To see this, recall that for a commutative triangle
$$
\xymatrix{
X \ar[r]^{f}\ar[dr]^{h} & Y\ar[d]^{g} \\
 & Z
}
$$
of derived stacks, there is an induced cofiber   sequence of the relative cotangent complexes:
$$
f^{\ast}\bL_{g}\rightarrow\bL_{h}\rightarrow\bL_{f}
$$
(see for example Corollary $1.44$ in \cite{DAGIV}).

We get two cofiber sequences:
$$\phi_V^\ast\bL_i\rightarrow\bL_{i\circ\phi_V}\rightarrow\bL_{\phi_V}$$
$$i_e^\ast\bL_{\phi}\rightarrow\bL_{\phi\circ i_e}\rightarrow\bL_{i_e}$$
Since we know that the two relative contangent complexes $\bL_i$ and $\bL_{i_e}$ vanish, we get equivalences
$$\bL_{i\circ\phi_V}\xrightarrow{\simeq}\bL_{\phi_V}$$
$$i_e^\ast\bL_{\phi}\xrightarrow{\simeq}\bL_{\phi\circ i_e}$$
Since there is an equivalence $\phi\circ i_e\simeq i\circ\phi_V$, we conclude that the equivalence \eqref{eq:eqv} holds.

We now show $\phi_V$ is formally étale. Recall that, for smooth closed points $x\in X$, the Luna étale slice theorem gives us an additional map $V\rightarrow T_{V,x}$ from $V$ to its tangent space at $x$ which is $T_x$-equivariant and étale onto its image, which is an open subscheme of $T_{V,x}$. We have a further commutative diagram
$$
\xymatrix{
\Mapo{E}{[t_0(T\times^{T_x}V)^{T(e)}/T]}\times_{E_T}U \ar[r]^-{\phi_V} \ar[d]^{j_e} & \Mapo{E}{[T\times^{T_x}V/T]}\times_{E_T}U\ar[d]^{j}\\
\Mapo{E}{[t_0(T\times^{T_x}T_{V,x})^{T(e)}/T]}\times_{E_T}U \ar[r]^-{\phi_x}  & \Mapo{E}{[T\times^{T_x}T_{V,x}/T]}\times_{E_T}U
}
$$
and, reasoning as in the previous case, we obtain an equivalence 
$$j_e^\ast\bL_{\phi_x}\xrightarrow{\simeq}\bL_{\phi_V}$$
But the map
$$
\Mapo{E}{[t_0(T\times^{T_x}T_{V,x})^{T(e)}/T]}\times_{E_T}U \rightarrow \Mapo{E}{[T\times^{T_x}T_{V,x}/T]}\times_{E_T}U
$$
is an equivalence by an application of lemma \ref{lemma:Anlocalization}. In particular, we conclude that the map $\phi_V$ is formally étale and deduce that $i_e^\ast\bL_{\phi} \simeq 0$ from \eqref{eq:eqv}.
 
Let us observe that, in the case of algebraic actions of tori on affine varieties, there is a sufficient supply of points with closed orbit, i.e. there is a collection of closed points in $X$ such that the orbit they span is closed, and the images of the Luna slice maps $\psi$ at each of these points form an open cover of $X$. 
Indeed, recall that in an affine variety with an action of an algebraic group $G$, for every orbit $O$ there is a unique closed orbit in the complement $\overline{O}-O$, where $\overline{O}$ is the closure of $O$.
Moreover, the Zariski open sets $Z$ given by images of the étale slice maps $\psi:T\times^{T_x}V\to X$ are saturated, that means that given a point $z\in Z$ and any other point $x\in X$, if the intersection $\overline{Tz}\bigcap\overline{Tx}$ of the closure of the orbits is non-empty, then $x\in Z$. In particular, given an orbit $O$ in $X$, there always exists a point $x\in X$ whose orbit is closed, and such that the image $Z$ of the Luna slice at the point $x$ contain the orbit $O$. As a consequence, it is always possible to cover $X$ with images of Luna slices.

Now we can conclude: for each induced map $i_e$ relative to each of these points we know that $i_e^\ast\bL_\phi$ vanishes, and the coproduct of all the maps $i_e$ is an étale effective epimorphism by equivariant Zariski codescent. This is enough to prove that $\bL_{\phi}=0$, and since $\phi$ is locally finitely presented we obtain that it is étale, as we desired to show.

We now argue that \eqref{eq:mapproof} is an equivalence.
Since it is étale and a closed immersion, it is also an open immersion; in particular it exhibits $\Mapo{E}{[t_0 X^{T(e)}/T]}$ as a union of connected components of $\Mapo{E}{[X/T]}$. Since $\Mapo{E}{[X/T]}$ is a union of connected components of $\Map{E}{[X/T]}$ by definition, checking that the map \eqref{eq:mapproof} is an equivalence amounts only to understanding if its image contains all such connected components, which can be checked on geometric points. But Proposition  \ref{remark:bijonpoints} tells us that such map is a homotopy equivalence on the spaces of closed points, and this concludes the proof.
\end{proof}

\begin{remark}
\label{locallyfinpreseq}
We make the following observation. Let $f:X\rightarrow Y$ is a locally finitely presented map of derived stacks. Then the induced map 
$\Mapo{E}{X}\to\Mapo{E}{Y}$ is locally finitely presented (the proof goes like that of Lemma \ref{lemma:openinclusion}). Note that in the sitiuation of the proof of Theorem \ref{proposition:localization}, the map $[t_0 X^{T(e)}/T]\to [X/T]$ is locally finitely presented, as the map $t_0 X^{T(e)}\rightarrow X$ is a locally finitely presented map of (classical) schemes. Recall indeed that, as $t_0 X^{T(e)}$ and $X$ are varieties over a characteristic zero field $k$, the map $t_0 X^{T(e)}\rightarrow X$ is locally finitely presented (see for example Lemma 29.21.11 in \cite[\href{https://stacks.math.columbia.edu/tag/01TO}{Tag 01TO}]{StacksProj1}). 
\end{remark}

A consequence of the localization formula is the following description of the fibers of the structure map $p':\Mapo{E}{[X/T]}\rightarrow\Pic^0(E)_T$.

\begin{corollary}\label{corollary:fibers}
Let $X$ be a smooth variety over $k$. Then given a closed point $$\bar{e}:\Spec K\rightarrow\Pic^0(E)_T$$ the fiber of the structure map $p':\Mapo{E}{[X/T]}\rightarrow\Pic^0(E)_T$ over the point $\bar{e}$ is given by  the derived fixed locus $X^{T(e)}$, where $e$ is the closed point in $E_T$ corresponding to the composition $\Spec K\xrightarrow{\bar{e}}\Pic^0(E)\to E_T$.
\end{corollary}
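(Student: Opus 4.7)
The plan is to deduce this corollary directly from the localization formula, Theorem~\ref{proposition:localization}. First I would apply Theorem~\ref{proposition:localization} to obtain a Zariski open neighborhood $U \subset E_T$ of $e$ over which the natural map
\[
\Mapo{E}{[t_0 X^e/T]} \times_{E_T} U \to \Mapo{E}{[X/T]} \times_{E_T} U
\]
induced by the inclusion $t_0 X^e \hookrightarrow X$ is an equivalence. Since $\bar{e}$ projects into $U \subset E_T$, passing to fibers over $\bar{e}$ on both sides reduces the problem to computing the fiber of the analogous structure map for $\Mapo{E}{[t_0 X^e/T]}$. The key advantage of this reduction is that $T(e)$ acts trivially on $t_0 X^e$ by the very definition of the classical fixed locus.

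Next I would exploit this trivial action to decompose the reduced mapping stack as a product. Choosing a splitting $T \simeq T(e) \times T/T(e)$, which is available because tori are diagonalizable and every short exact sequence of tori splits, we obtain $[t_0 X^e/T] \simeq [t_0 X^e/(T/T(e))] \times BT(e)$. Since $\Mapo{E}{-}$ preserves products by Lemma~\ref{remark:Mapolimitpreserving}, this yields $\Mapo{E}{[t_0 X^e/T]} \simeq \Mapo{E}{[t_0 X^e/(T/T(e))]} \times \Pic^0(E)_{T(e)}$, and under the compatible decomposition $\Pic^0(E)_T \simeq \Pic^0(E)_{T/T(e)} \times \Pic^0(E)_{T(e)}$ the structure map $p'$ splits as $\pi' \times \mathrm{id}$, where $\pi'$ is the structure map for the first factor.

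To conclude, I would observe that by the very definition of $T(e)$ the image of $e$ in $E_{T/T(e)} = E_T/E_{T(e)}$ is zero, so the first component of $\bar{e}$ under the decomposition above classifies the trivial $T/T(e)$-bundle on $E$. Hence the fiber of $p'$ over $\bar{e}$ reduces to the fiber of $\pi'$ over the trivial bundle, which is the stack of quasi-constant maps $\Mapo{E}{t_0 X^e}$. By Corollary~\ref{corollary:LoopstackequalsMapo}, this is equivalent to the derived loop space $\cL(t_0 X^e)$, which is identified with the derived fixed locus $X^e$. The main obstacle I anticipate is precisely this last identification of $\cL(t_0 X^e)$ with $X^{T(e)} = X \times_{X \times X}(X \times T(e))$; for smooth $X$ and smooth classical fixed locus $t_0 X^{T(e)}$, this ought to follow from a deformation-theoretic comparison using the weight decomposition of the normal bundle of $t_0 X^{T(e)}$ in $X$ under the $T(e)$-action.
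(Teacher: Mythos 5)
Your strategy shares the paper's opening move---apply Theorem~\ref{proposition:localization} to replace $X$ with $t_0 X^e$---and its endpoint---reduce to $\Mapo{E}{t_0 X^e}\simeq\cL(t_0 X^e)\simeq X^e$. However, the middle step contains a genuine gap. You propose the splitting $T\simeq T(e)\times T/T(e)$, justified by ``every short exact sequence of tori splits.'' But $T(e)$ is in general \emph{not} a subtorus of $T$: as the characterization preceding Section~\ref{subsection:locformula} shows, $T(e)\cong\big(\prod_j \mu_{n_j}\big)\times(\bG_m)^{n-l}$, and it has nontrivial finite factors precisely when some coordinate of $e$ is torsion of order $>1$---which is the interesting case. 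For a concrete failure, take $T=\bG_m$ and $T(e)=\mu_n$ with $n>1$: then $T/T(e)\cong\bG_m$, but $\bG_m\not\cong\mu_n\times\bG_m$ and the surjection $\bG_m\to\bG_m/\mu_n$, $z\mapsto z^n$, admits no section. Consequently your product decomposition $[t_0 X^e/T]\simeq[t_0 X^e/(T/T(e))]\times BT(e)$ is false in general (already for $t_0 X^e=\Spec k$, $T=\bG_m$, $T(e)=\mu_n$, since $B\bG_m\not\simeq B\bG_m\times B\mu_n$), and the argument cannot proceed as written.

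The paper sidesteps this by never splitting the extension: it works with the pullback square
$$
\xymatrix{
[t_0 X^e /T(e)] \ar[r] \ar[d]^{p_e}  & [t_0 X^e/T]\ar[d] \\
[\Spec k/T(e)] \ar[r]               & [\Spec k/T]
}
$$
observes that $\bar e$ factors through $\Bun^0_{T(e)}(E)$, and applies $\Mapo{E}{-}$ (which preserves limits) to obtain a pasting of pullback squares. Only \emph{after} that reduction does the triviality of the $T(e)$-action on $t_0 X^e$ yield a product $\Mapo{E}{[t_0 X^e/T(e)]}\simeq\Mapo{E}{t_0 X^e}\times\Bun^0_{T(e)}(E)$, from which the fiber is read off directly. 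Finally, your worry about the last identification $\cL(t_0 X^e)\simeq X^e$ is reasonable, but you do not need a deformation-theoretic argument from scratch: the paper invokes Corollary~1.0.1 of \cite{HChen}, which establishes exactly this.
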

\begin{proof}
Since the closed immersion $\bar{e}:\Spec K\rightarrow\Pic^0(E)_T$ factors through $\underline{U_e}=U_e\times BT$, where $U_e$ is the Zariski open determined by Proposition \ref{proposition:localization} for the point $e$, we can apply   the same proposition to reduce the computation to the fiber 
\begin{equation*}
\left (\Mapo{E}{[X/T]}\times_{\Pic^0(E)_T} \underline{U_e}\right )\times_{\underline{U_e}}\bar{e}\simeq\left (\Mapo{E}{[t_0 X^{T(e)}/T]}\times_{\Pic^0(E)_T} \underline{U_e}\right )\times_{\underline{U_e}}\bar{e}
\end{equation*}
which in turn is the fiber of the structure map $p':\Mapo{E}{[t_0 X^{T(e)}/T]}\rightarrow\Pic^0(E)_T$ over the point $\bar{e}$. 

Let $T(e)$ be the subgroup associated to the point $e$ in $E_T$. The map $\bar{e}:\Spec K\rightarrow\Pic^0(E)_T$ will factor through the stack $\Bun^0_{T(e)}(E)$, hence we can compute the fiber using the following pasting of pullback diagrams:
$$
\xymatrix{
F_{e} \ar[r]\ar[d]  &\Mapo{E}{[t_0 X^{T(e)} /T(e)]} \ar[r]\ar[d]  & \Mapo{E}{[t_0 X^{T(e)}/T]}\ar[d] \\
\Spec K \ar[r]^{\bar{e}}                 &\Bun^0_{T(e)}(E) \ar[r]               & \Pic^0(E)_T
}
$$
where we called $F_{e}$ the fiber we are interested in computing. The square on the right is a pullback since $\Mapo{E}{-}$ commutes with limits, and the diagram
$$
\xymatrix{
[t_0 X^{T(e)} /T(e)] \ar[r] \ar[d]^{p_e}  & [t_0 X^{T(e)}/T]\ar[d] \\
[\Spec k/T(e)] \ar[r]               & [\Spec k/T]
}
$$
is a pullback. The calculation of $F_{e}$ follows easily from the observation that, by definition, $T(e)$ acts trivially on $t_0 X^{T(e)}$, and in particular we have that 
\begin{equation*}
 \Mapo{E}{[t_0 X^{T(e)} /T(e)]} \simeq \Mapo{E}{t_0 X^{T(e)} \times BT(e)} \simeq \Mapo{E}{t_0 X^{T(e)}} \times \Bun^0_{T(e)}(E)
\end{equation*} 
hence the map $p_e$ necessarily has fiber over $\bar{e}$ given by $\Mapo{E}{t_0 X^{T(e)}}$, as $p_e$ is isomorphic to the projection to $\Bun^0_{T(e)}(E)$. Then, by Corollary $1.0.1$ in \cite{HChen}  
\begin{equation*}
\Mapo{E}{t_0 X^{T(e)}}\simeq\cL\, t_0 X^{T(e)} \simeq X^{T(e)}
\end{equation*} 
\end{proof}


\begin{remark}
We established Theorem \ref{proposition:localization} for closed points $e\in E_T$, but it holds also for non-closed points $x \in E_T$ with the notion of subgroup $T(x)$ associated to one such point introduced in Remark \ref{rmk:T(x)nonclosed}. Indeed, if $x$ is any point in $E_T$, its closure $\overline{\{x\}}$ contains at least one closed point $e$ such that the two subgroups $T(e)$ and $T(x)$ coincide. We can then declare the Zariski open subset $U_x$ of $E_T$ realizing localization for the point $x$ to be the open subset $U_e$ associated to the closed point $e$, as $x \in U_e$. If we do so, the statement of Theorem \ref{proposition:localization} extends to non-closed points. 
\end{remark}

\subsection{The local structure of the quasi-constant maps}
We now compute the completions of elliptic Hochschild homology at closed points of $E_T$. 

Recall that for a derived stack $\mathcal{X}$, there is a natural map
\begin{equation*}
\mathcal{X}\xrightarrow{\simeq}\Map{\Spec k}{\mathcal{X}}\rightarrow\Mapo{E}{\mathcal{X}}
\end{equation*}
induced by the structure morphism $E\rightarrow \Spec k$. We call the completion of this map \emph{the completion of $\Mapo{E}{\cX}$ at the constant maps} or \emph{formal maps} from $E$ to $\cX$. There is an analogous map for the loop space $\cL\cX$, and its formal completion is usually called the \emph{formal loop space}: we denote it by $\widehat{\cL}\cX$. See Definition \ref{def:formalloops} for reference.
\begin{remark}
The formal completion of $\Mapo{E}{\cX}$ at the constant maps is the same as that of $\Map{E}{\cX}$, as $\Mapo{E}{\cX}$ is  a collection of connected components of $\Map{E}{\cX}$ containing  the constant maps.
\end{remark}
Recall the notion of 
\begin{proposition}\label{proposition:formalcompletions}
Let $\cX$ be a derived stack with affine diagonal over a field $k$ of characteristic zero, and $E$ be an elliptic curve over $k$. There is a natural map between formal completions at the constant maps
\begin{equation*}
\psi:\widehat{\cL}\cX\rightarrow\fMapo{E}{\cX}
\end{equation*}
Further, $\psi$ is an equivalence. 
\end{proposition}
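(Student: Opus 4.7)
The plan is to construct $\psi$ by factoring through the unipotent loop space. The formality equivalence $\Aff{E}\simeq\Aff{S^1}\simeq\bA^1[1]$ recalled in Section \ref{subsection:affinization} identifies $\Map{\Aff{E}}{\cX}\simeq\cL^u\cX$. The unit maps $E\to\Aff{E}$ and $S^1\to\Aff{S^1}$ then produce a span
$$
\Mapo{E}{\cX}\leftarrow\cL^u\cX\rightarrow\cL\cX
$$
in which the left arrow factors through $\Mapo{E}{\cX}$ because $\cL^u\cX$ is connected and contains the constants. Passing to formal completions at the constants yields a span of formal derived stacks over $\cX$
$$
\fMapo{E}{\cX}\xleftarrow{b}\widehat{\cL^u}\cX\xrightarrow{a}\widehat{\cL}\cX
$$
and I would define $\psi$ as $b\circ a^{-1}$, once both $a$ and $b$ have been shown to be equivalences.

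Both $a$ and $b$ are instances of a single claim: for any pointed connected derived stack $Y$, the unit $Y\to\Aff{Y}$ induces an equivalence
$$
\widehat{\Map{\Aff{Y}}{\cX}}_{\cX}\xrightarrow{\simeq}\widehat{\Map{Y}{\cX}}_{\cX}
$$
of formal completions at the constant maps. Both sides are formal derived stacks over $\cX$ agreeing on the classical locus (both cut out by the constants), so by standard formal deformation theory (as in \cite{DAGIV}) the claim reduces to showing that the relative cotangent complex of the comparison vanishes along $\cX$.

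To verify the cotangent vanishing, I would use the Halpern-Leistner--Preygel formula $\bL_{\Map{Y}{\cX}}=\pi_+ f^{*}\bL_{\cX}$ from \cite{HLP}, applied at a constant map $f$ valued at a point $x\in\cX$. The cotangent fiber at $x$ is controlled by $R\Gamma(Y,\cO_Y)\otimes T^{*}_{x}\cX$. By the universal property of the affinization, the unit induces a quasi-isomorphism $R\Gamma(\Aff{Y},\cO)\xrightarrow{\simeq}R\Gamma(Y,\cO)$, so the induced map on tangent complexes at every constant map is an equivalence; this yields the desired vanishing of the relative cotangent complex along the classical locus $\cX$.

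The main obstacle will be passing cleanly from this pointwise cotangent-complex comparison to a global equivalence of formal stacks. Concretely, one must verify that $\fMapo{E}{\cX}$ and $\widehat{\cL^u}\cX$ fit in the formal-moduli-problem framework of \cite{DAGIV,DAGVIII}, so that the criterion ``equivalence on the classical locus plus vanishing relative cotangent complex implies equivalence'' genuinely applies to the comparison maps here. Granting this technical input, the criterion yields that both $a$ and $b$ are equivalences, and hence that $\psi$ is itself an equivalence.
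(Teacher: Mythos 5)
Your overall strategy — constructing $\psi$ by passing through the unipotent loop space via the formality equivalence $\Aff{E}\simeq\Aff{S^1}$, and then reducing the equivalence to a comparison of cotangent complexes at the constant maps — is indeed the same as the paper's. The paper constructs $\psi$ slightly differently, via an atlas $U_\bullet \to \cX$ together with the result of \cite{BZNLoopConn} (Theorem 6.9), rather than formally inverting the arrow $\widehat{\cL^u}\cX \to \widehat{\cL}\cX$; but the content is equivalent, as the paper likewise relies on the fact that the exponential $\widehat{T}_\cX[-1]\to\widehat{\cL}\cX$ factors through $\widehat{\cL^u}\cX\simeq\fMapo{\Aff{E}}{\cX}$.

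The meaningful divergence is in the cotangent comparison. You propose to apply the Halpern-Leistner--Preygel formula $\bL_{\Map{Y}{\cX}}=\pi_+f^{*}\bL_\cX$ to both $Y=E$ and $Y=\Aff{E}$ and compare via the equivalence $R\Gamma(\Aff{E},\cO)\simeq R\Gamma(E,\cO)$. Two cautions here. First, the HLP formula as cited is established for $Y$ a proper scheme, and its applicability to the affine stack $\Aff{E}$ is not immediate; the paper's Lemma \ref{lemma:cotangentcomplexmap} sidesteps this entirely by instead reducing to the based loop space $\Omega_x\Map{-}{\cX}\simeq\Map{-}{\Omega_x\cX}$ and using that $\Omega_x\cX$ is affine when $\cX$ has \emph{affine diagonal}, so that $\Map{\Aff{E}}{\Omega_x\cX}\simeq\Map{E}{\Omega_x\cX}$ tautologically. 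Note this affine-diagonal hypothesis, which your proposal omits, is genuinely needed for the argument and is the condition under which the paper proves the lemma. Second, the cotangent fiber at a constant map is really $T^*_x\cX\otimes\pi_+\cO_Y$, with $\pi_+\cO_Y$ the linear \emph{dual} of $R\Gamma(Y,\cO_Y)$; this does not affect the conclusion but should be phrased correctly.

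The technical point you flag — upgrading the cotangent comparison to an equivalence of formal thickenings of $\cX$ — is exactly the role played in the paper by constructing $\psi$ from \cite{BZNLoopConn} (Theorem 6.9), which packages the formal-moduli bookkeeping. If you want to make your span-based argument rigorous, you would need to supply this input (e.g., check that both $a$ and $b$ are maps of formal thickenings of $\cX$ inducing equivalences on cotangent complexes restricted to $\cX$), which is essentially a restatement of BZN's result.
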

Proposition \ref{proposition:formalcompletions} is closely related to results that have already appeared in the literature in slightly different settings, and in particular to Theorem 6.9 of \cite{BZNLoopConn}. The point is that the deformation theory of quasi-constant maps out of $E$ near the constant maps is controlled by the affinization of $E$. As the latter is equivalent to the affinization of $S^1$ the completion of $\Mapo{E}{\cX}$ is equivalent to the completion of $\cL X$. 

\begin{proof}
We will show that the map 
$$\psi:\widehat{\cL}\cX\simeq\fMapo{\Aff{E}}{\cX}\rightarrow\fMapo{E}{\cX}$$
induces an equivalence of the pullback of the cotangent complexes to the constant maps.

Define the maps
\begin{align*}
& u:\Map{\Aff{E}}{\cX}\rightarrow\Map{E}{\cX} \\
& c':\cX\simeq\Map{\Spec k}{\cX}\rightarrow\Map{\Aff{E}}{\cX}\\
& c:=u\circ c':\cX\simeq\Map{\Spec k}{\cX}\rightarrow\Map{E}{\cX}
\end{align*}
given by composition with the unit of the affinization $E\rightarrow\Aff{E}$ and with the structure maps $\Aff{E}\rightarrow\Spec k$ and $E\rightarrow\Spec k$ respectively. The map $u$ induces a fiber-cofiber sequence
\begin{equation*}
u^\ast\bL_{\Map{E}{\cX}}\rightarrow\bL_{\Map{\Aff{E}}{\cX}}\rightarrow\bL_{u}
\end{equation*}
of quasi-coherent sheaves on $\Map{\Aff{E}}{\cX}$. Here, $\bL_u$ denotes the relative cotangent complex of the map $u$.
Pulling this back along the map $c'$ we get a fiber-cofiber sequence
\begin{equation*}
c^\ast\bL_{\Map{E}{\cX}}\rightarrow c'^\ast\bL_{\Map{\Aff{E}}{\cX}}\rightarrow c'^\ast\bL_{u}
\end{equation*}
of quasi--coherent sheaves on $\cX$. Our goal is to show that $c'^\ast\bL_u$ vanishes. 

To do so, we show the stronger fact that for any constant map $x:\Spec S\to \cX$ the map of the based loop spaces
$$\Omega_x(u):\Omega_x\Map{\Aff{E}}{\cX}\to\Omega_x\Map{E}{\cX}$$
is an equivalence.
Indeed, the cotangent complex of a based loop space is a shift of the original cotangent complex: indeed, let $F:\dAff^\op\to\cS$ be a prestack, and let $x:\Spec S\to F$ be an $S$-point of $F$; then the point $x$ canonically induces a point $\delta_x:\Spec S\to \Omega_x F$, and the following relation holds:
$$\bL_{F,x} \simeq \bL_{\Omega_x F,\delta_x}[-1]$$

To show that $\Omega_x(u)$ is an equivalence, observe that the based loop spaces have a presentation as mapping stacks: in particular we have equivalences
\begin{align*}
& \Omega_x\Map{\Aff{E}}{\cX}\simeq\Map{\Aff{E}}{\Omega_x\cX} \\
& \Omega_x\Map{E}{\cX}\simeq\Map{E}{\Omega_x\cX}
\end{align*}
obtained by applying the tensor-hom adjunction twice on different factors. Since $\cX$ has affine diagonal the based loop space $\Omega_x\cX$ is an affine scheme, hence we have an identification $$\Map{\Aff{E}}{\Omega_x\cX}\simeq\Map{E}{\Omega_x\cX}.$$
This completes the proof.
\end{proof}


\begin{remark}
The equivalence in Proposition \ref{proposition:formalcompletions} is clearly natural in the second variable. Note also that the fact that the map $\Map{\Aff{E}}{\cX}\rightarrow\Map{E}{\cX}$ has vanishing cotangent complex over the constant maps holds in considerable generality, as the only restriction is that the mapping stacks have to admit a cotangent complex over the loci of constant maps. For instance, it remains true if $E$ replaced with any smooth variety.
\end{remark}

We will now apply Proposition \ref{proposition:formalcompletions} to compute the completion of $\cHH_E([X/T])$ at the identity of $E_T$. 
Denote by 
$$
i: \{1_{E_T}\} \to E_T \quad j: \{1_T\} \to T
$$
the inclusion of the identity elements.  
Let $\widehat{E_T}$ be the completion of $E_T$ at $i$ and denote by 
$$\widehat{i}: \widehat{E_T}\rightarrow E_T$$ the natural map. 
Similarly, let $\widehat{T}$ be the completion of $T$ at $j$ and denote by $$\widehat{j}:\widehat{T}\rightarrow T$$ the natural map. 
Following Remark $2.4.9$ in \cite{HChen}, we define the derived completion of a quasi-coherent sheaf $\cF$ on $E_T$ at the identity element as the pull-push $$\widehat{i}_\ast\widehat{i}^\ast \cF$$

From now on we require our variety $X$ to be quasi-projective. This is because, as a consequence of Corollary \ref{corollary:fibers}, the natural map 
$$p:\Mapo{E}{[X/T]}\to E_T$$
is perfect in the sense of Ben-Zvi--Francis--Nadler \cite{BZFN}. Indeed, $X$ quasi-projective implies that the map is (a composition of) maps between perfect stacks, hence it satisfies base-change. This is required in our arguments below. We do believe though that the map in question is perfect even without such quasi-projectivity assumption.

\begin{corollary}\label{corollary:completionatone}
The derived completion of $\cHH_E([X/T])$ at the identity of $E_T$ 
$$\widehat{i}_\ast\widehat{i}^\ast \cHH_E([X/T])$$ 
is the pushforward along $\widehat{i}$ of the completion of $\mathrm{HH}_\ast([X/T])$ at the prime ideal corresponding to the point $1\in T \cong \Spec \mathrm{HH}_\ast([\Spec k/T])$.
\end{corollary}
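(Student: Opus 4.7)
The strategy is to combine Proposition \ref{proposition:formalcompletions} with derived base change for pushforward along the formal completion $\widehat{i}\colon \widehat{E_T} \to E_T$. First, since $\cHH_E([X/T]) = p_\ast \cO_{\Mapo{E}{[X/T]}}$, derived base change along $\widehat{i}$ would yield
\[
\widehat{i}^\ast \cHH_E([X/T]) \simeq \widehat{p}_\ast \cO_{\widehat{\Mapo{E}{[X/T]}}_{p^{-1}(1_{E_T})}},
\]
where the subscript denotes derived formal completion along the fiber of $p$ over the identity, and $\widehat{p}$ is the restriction of $p$ mapping into $\widehat{E_T}$.

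Next I would identify this formal completion with $\fMapo{E}{[X/T]}$, the completion at the constant map locus. The key observation is that the derived formal completion $\widehat{Y}_Z$ depends only on the reduced classical truncation of $Z$, by the functor-of-points definition involving $\pi_0(S)^{\mathrm{red}}$. Using Corollary \ref{corollary:fibers}, the derived fiber $p^{-1}(1_{E_T})$ is described (up to the $BT$-factor) by the loop stack $\cL X /T$; since $X$ is a smooth variety, $t_0(\cL X) = X$, and hence the reduction of $p^{-1}(1_{E_T})$ coincides with the reduction of the constant map locus $[X/T] \hookrightarrow \Mapo{E}{[X/T]}$. Consequently, $\widehat{\Mapo{E}{[X/T]}}_{p^{-1}(1_{E_T})} \simeq \fMapo{E}{[X/T]}$.

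I would then invoke Proposition \ref{proposition:formalcompletions} for $\cX = [X/T]$ to obtain $\fMapo{E}{[X/T]} \simeq \widehat{\cL}[X/T]$. To match bases, the same proposition applied to $\cX = BT$ yields $\widehat{E_T} \times BT \simeq \widehat{T} \times BT$ as stacks over $BT$, from which one extracts a canonical identification $\widehat{E_T} \simeq \widehat{T}$. By naturality of Proposition \ref{proposition:formalcompletions} with respect to the map $[X/T] \to BT$, the map $\widehat{p}$ corresponds under this identification to the natural projection $\widehat{\cL}[X/T] \to \widehat{T}$ coming from $\cL BT \simeq T \times BT$. Finally, by flat base change along $\widehat{T} \to T$, the global sections of $\cO_{\widehat{\cL}[X/T]}$ will coincide with the derived completion of $\mathrm{HH}_\ast([X/T])$ at the ideal of $1 \in T = \Spec \pi_0 \mathrm{HH}_\ast([\ast/T])$, giving the right-hand side of the corollary.

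The main technical obstacle is the second step: although the derived fiber $p^{-1}(1_{E_T})$ strictly contains the constant map locus and carries non-trivial derived structure reflecting the loop space, its classical reduction collapses to $[X/T]$, and checking that the derived formal completion is insensitive to this derived enhancement requires a careful use of the reduced-pointwise definition of $\widehat{Y}_Z$. A secondary subtlety is that the naturality argument in the third step, which produces the canonical identification $\widehat{E_T} \simeq \widehat{T}$ intertwining the two structure maps, must be unpacked from the explicit construction of the equivalence in Proposition \ref{proposition:formalcompletions} through the affinization and the formal unipotent loop space.
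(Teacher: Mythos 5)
Your proposal is essentially the paper's own argument: base change along $\widehat{i}$, identification of the resulting formal stack with $\fMapo{E}{[X/T]}$, the comparison $\fMapo{E}{[X/T]}\simeq\widehat{\cL}[X/T]$ and $\widehat{E_T}\simeq\widehat{T}$ from Proposition \ref{proposition:formalcompletions}, then affineness of $T$ to pass to global sections. The one place you add genuine content is the middle step. The paper simply asserts the pullback square
\[
\xymatrix{
\fMapo{E}{[X/T]} \ar[r]\ar[d]^{\widehat{p}} & \Mapo{E}{[X/T]}\ar[d]^{p} \\
\widehat{E_{T}}\ar[r]^{\widehat{i}} & E_{T}
}
\]
without justification, whereas you correctly observe that what one actually gets from base change is the formal completion of $\Mapo{E}{[X/T]}$ along the derived fiber $p^{-1}(1_{E_T})$, and you argue that this agrees with the completion at the constant maps because derived formal completion only sees the reduced classical truncation of the center, and because Corollary \ref{corollary:fibers} identifies $p^{-1}(1_{E_T})$ (through $T(1)=\{1\}$ and $X^{\{1\}}\simeq\cL X$, with $t_0\cL X = X$ for $X$ smooth) as a derived thickening of $[X/T]$. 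This is precisely the content hidden in the paper's pullback square, and making it explicit is an improvement rather than a divergence. The rest — extracting $\widehat{E_T}\simeq\widehat{T}$ by applying Proposition \ref{proposition:formalcompletions} to $BT$ and invoking naturality over $BT$, and using base change along $\widehat{T}\to T$ plus affineness of $T$ to land on the completion of $\mathrm{HH}_\ast([X/T])$ at $1\in T$ — matches the paper step for step.
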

\begin{proof}
Consider the following pullback diagram:
$$
\xymatrix{
\fMapo{E}{[X/T]} \ar[r]^{\widehat{i}_X}\ar[d]^{\widehat{p}} & \Mapo{E}{[X/T]}\ar[d]^{p} \\
\widehat{E_{T}}\ar[r]^{\widehat{i}} \ar[r] & E_{T}
}
$$
Base-changing along this diagram, we can substitute $\widehat{i}^\ast p_\ast$ with $\widehat{p}_\ast\widehat{i}_{X}^\ast$ in $\widehat{i}_\ast\widehat{i}^\ast \cHH_E([X/T])$. 

There is an analogous pullback square for the loop space
$$
\xymatrix{
\widehat{\cL}[X/T] \ar[r]^{\widehat{j}_X}\ar[d]^{\widehat{q}} & \cL[X/T]\ar[d]^{q} \\
\widehat{T}\ar[r]^{\widehat{j}} \ar[r] & T
}
$$
We may consider similar completions for the loop space, namely $\widehat{j}_\ast\widehat{j}^\ast q_{\ast}\cO_{\cL[X/T]}$. Base-changing along the pullback square for the loop space we may rewrite this completion as
\begin{equation}\label{eq:BCloops}
\widehat{j}_\ast\widehat{j}^\ast q_{\ast}\cO_{\cL[X/T]} \simeq \widehat{j}_\ast \widehat{q}_{\ast}\widehat{j}_{X}^{\ast}\cO_{\cL[X/T]} \simeq \widehat{j}_\ast \widehat{q}_{\ast}\cO_{\widehat{\cL}[X/T]}
\end{equation}
Now, $\widehat{j}_\ast \widehat{q}_{\ast}\cO_{\widehat{\cL}[X/T]}$ is a quasi-coherent sheaf on $T$, which is affine, hence it is completely determined by its global sections, which are given by the completion of the Hochschild homology module of $[X/T]$ at the maximal ideal corresponding to the identity element of the torus $T$. 

Proposition \ref{proposition:formalcompletions} provides an identification of the maps $\widehat{p}$ and $\widehat{q}$. In particular, when evaluating the expression $\widehat{i}_\ast\widehat{i}^\ast \cHH_E([X/T])$ we obtain
\begin{equation*}
\widehat{i}_\ast\widehat{i}^\ast \cHH_E([X/T]) \simeq \widehat{i}_\ast \widehat{p}_\ast\cO_{\fMapo{E}{[X/T]}} \simeq \widehat{i}_\ast \widehat{q}_{\ast}\cO_{\widehat{\cL}[X/T]}
\end{equation*}
\end{proof}

The completions over other closed points $e$ of $E_T$ can be computed from the completion at the identity using the localization formula explained in \ref{subsection:locformula} and the group structure on $E_T$. 

For a closed point $e$ of $E_T$, let $\widehat{i}_e:\widehat{E}_{T,e}\rightarrow E_T$ be the natural map from the derived formal completion of $E_T$ at the closed point $e$ to $E_T$. Moreover, call $\widehat{\mu}_e:\widehat{E}_T\rightarrow\widehat{E}_{T,e}$ the completion of the map of multiplication by $e$, which is an equivalence of formal derived schemes. In particular, the group structure on $E_T$ gives canonical identifications between completions at different closed points.

\begin{theorem}\label{theorem:completionsgeneral}
The (derived) completion of $\cHH_E([X/T])$ at the closed point $e$ of $E_T$, 
$$(\widehat{i}_e)_\ast\widehat{i}_e^\ast \cHH_E([X/T]),$$ is the completion of $\mathrm{HH}_\ast([t_0 X^{T(e)}/T])$ at the prime ideal corresponding to the point 
$$1\in T \cong \Spec \mathrm{HH}_\ast([\Spec k/T]).$$
\end{theorem}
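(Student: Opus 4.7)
My plan is to reduce the general statement to Corollary~\ref{corollary:completionatone} (which handles the completion at $1 \in E_T$) in two steps: first, applying the localization theorem to replace $X$ by the classical fixed locus $t_0 X^e$; second, using a translation invariance to transport the completion at $e$ to the completion at $1$.

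For the first step, Theorem~\ref{proposition:localization} produces a Zariski open neighborhood $U \subset E_T$ of $e$ such that $\Mapo{E}{[t_0 X^e/T]}\times_{E_T} U \simeq \Mapo{E}{[X/T]}\times_{E_T} U$. Pushing forward and taking derived completions at $e$, I would obtain
$$(\widehat{i}_e)_\ast\widehat{i}_e^\ast \cHH_E([X/T]) \simeq (\widehat{i}_e)_\ast\widehat{i}_e^\ast \cHH_E([t_0 X^e/T]),$$
since derived completions depend only on an arbitrarily small neighborhood of $e$.

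For the second step, I would exploit that $T(e)$ acts trivially on $t_0 X^e$. This implies that $\Bun^0_{T(e)}(E)$ acts on $\Mapo{E}{[t_0 X^e/T]}$: a degree-zero $T(e)$-bundle $P$ twists the underlying principal $T$-bundle $Q$ of a map $E \to [t_0 X^e/T]$ (via $T(e) \hookrightarrow T$) to a new $T$-bundle $Q \otimes P$, while the section of the associated $t_0 X^e$-bundle transports canonically because the $T$-action on $t_0 X^e$ factors through $T/T(e)$. This action covers the translation of $E_{T(e)} \subset E_T$ on $E_T$. Since $T(e)$ is the intersection of subtori $T' \subset T$ with $e \in E_{T'}$, its cocharacter lattice is a saturated sublattice and the point $e$ lies in $E_{T(e)}$. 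Translation by $e^{-1} \in E_{T(e)}$ therefore lifts to an automorphism of $\Mapo{E}{[t_0 X^e/T]}$ sending a neighborhood of the fiber over $e$ to a neighborhood of the fiber over $1$, and hence identifies the completions of $\cHH_E([t_0 X^e/T])$ at $e$ and at $1$. Applying Corollary~\ref{corollary:completionatone} to $[t_0 X^e/T]$ in place of $[X/T]$ then identifies the latter with the completion of $\mathrm{HH}_\ast([t_0 X^e/T])$ at $1 \in T$, which is the desired conclusion.

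The main obstacle I anticipate is making rigorous the twisting action in the second step: one must check that it is well-defined at the level of the derived stack $\Mapo{E}{[t_0 X^e/T]}$, that it lifts the translation action of $E_{T(e)}$ on $E_T$ in a coherent way, and that the induced equivalence on completions commutes with the push-pull functors defining them. A possible alternative, avoiding the explicit construction of the translation action, would be to compare the pro-cotangent complexes at $e$ and at $1$ directly, in the spirit of the argument used in Proposition~\ref{proposition:formalcompletions}.
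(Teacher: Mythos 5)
Your proposal takes essentially the same route as the paper's proof: localization (Theorem \ref{proposition:localization}) reduces to $t_0 X^e$, translation on $E_T$ moves the completion from $e$ to the identity, and Corollary \ref{corollary:completionatone} finishes. The translation step you flag as the main obstacle is in fact the content of the pullback square in the paper's proof (the one involving $\widehat{\mu}_{e,X}$ and $\widehat{\mu}_e: \widehat{E}_T \to \widehat{E}_{T,e}$), which the paper asserts without constructing the lift; your explicit $\Bun^0_{T(e)}(E)$-twisting action is precisely the missing construction, with the small caveat that $T(e)$ is in general the product of a subtorus and a finite abelian group rather than a subtorus, so one should phrase the lift in terms of the paper's extended $E_{T(e)}$ (including torsion factors $E[n_i]$) rather than a literal saturated cocharacter sublattice, though the needed conclusion $e \in E_{T(e)}$ still holds.
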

\begin{proof}
Using a similar base change procedure as in the proof of Corollary \ref{corollary:completionatone} we can rewrite the derived completion of $\cHH_E([X/T])$ at $e$ as
\begin{equation*}
(\widehat{i}_e)_\ast\widehat{i}_e^\ast \cHH_E([X/T])\simeq (\widehat{i}_e)_\ast (\widehat{p}_{e})_\ast\cO_{\Mapo{E}{[X/T]}}
\end{equation*}
where $p_{\widehat{e}}:\fMapo{E}{[X/T]}_{e}\rightarrow\widehat{E}_{T,e}$ is the completion at $e$ of the structure map $p$. Proposition \ref{proposition:localization} gives us a Zariski open $U$ of $E_T$ containing $e$ such that $$\Mapo{E}{[t_0 X^{T(e)}/T]}\times_{E_T} U\rightarrow\Mapo{E}{[X/T]}\times_{E_T} U$$ is an equivalence, which implies that the completion at $e$ of $\cHH_E([X/T])$ is equivalent to the completion at the same point of the sheaf given by $\cHH_E([t_0 X^{T(e)}/T])$.

Consider the following pullback diagram:
$$
\xymatrix{
\fMapo{E}{[t_0 X^{T(e)}/T]} \ar[r]^{\widehat{\mu}_{e,X}}\ar[d]^{\widehat{p}} & \fMapo{E}{[t_0 X^{T(e)}/T]}_{e}\ar[d]^{\widehat{p}_e} \\
\widehat{E}_{T}\ar[r]^{\widehat{\mu}_e} \ar[r] & \widehat{E}_{T,e}
}
$$
Since $\widehat{\mu}_e$ is an equivalence, we have the following relation:
\begin{equation*}
(\widehat{p}_{e})_{\ast}\cO_{\fMapo{E}{[t_0 X^{T(e)}/T]}_{e}} \simeq (\widehat{\mu}_{e})_{\ast}\widehat{p}_{\ast}\cO_{\fMapo{E}{[t_0 X^{T(e)}/T]}}
\end{equation*}
Corollary \ref{corollary:completionatone} implies that we can identify $\widehat{p}_{\ast}\cO_{\fMapo{E}{[t_0 X^{T(e)}/T]}}$ with $\widehat{q}_{\ast}\cO_{\widehat{\cL}[t_0 X^{T(e)}/T]}$. By plugging in this equivalence in the previous expression we obtain
\begin{equation*}
(\widehat{p}_{e})_{\ast}\cO_{\fMapo{E}{[t_0 X^{T(e)}/T]}_{e}} \simeq (\widehat{\mu}_{e})_{\ast}\widehat{q}_{\ast}\cO_{\widehat{\cL}[t_0 X^{T(e)}/T]}
\end{equation*}
as quasi-coherent sheaves on the formal completion $\widehat{E}_{T,e}$. In order to obtain the desired quasi-coherent sheaf on $E_T$ we need to take the pushforward  along $\widehat{i}_e$: 
$$
(\widehat{i}_e)_\ast\widehat{i}_e^\ast \cHH_E([X/T]) \simeq (\widehat{i}_e)_{\ast}(\widehat{\mu}_{e})_{\ast}\widehat{q}_{\ast}\cO_{\widehat{\cL}[t_0 X^{T(e)}/T]}
$$
Since the map $\widehat{i}:\widehat{E}_T\rightarrow E_T$ factors as $\widehat{\mu}_e:\widehat{E}_T\rightarrow\widehat{E}_{T,e}$ followed by $\widehat{i}_e:\widehat{E}_{T,e}\rightarrow E_T$, we can rewrite the previous formula as 
\begin{equation*}
(\widehat{i}_e)_\ast\widehat{i}_e^\ast \cHH_E([X/T]) \simeq 
(\widehat{i}_e)_{\ast}(\widehat{\mu}_{e})_{\ast}\widehat{q}_{\ast}\cO_{\widehat{\cL}[t_0 X^{T(e)}/T]}
\simeq 
\widehat{i}_{\ast}\widehat{q}_{\ast}\cO_{\widehat{\cL}[t_0 X^{T(e)}/T]}
\end{equation*}
and this completes the proof.
\end{proof}

Theorem \ref{theorem:completionsgeneral} will play a major role in the proof of our general comparison theorem between a periodic cyclic version of elliptic Hochschild homology and equivariant elliptic cohomology in the sense of Grojnowski. This is the content of Section \ref{section:Tate}.

\begin{remark}
Using equation \eqref{eq:BCloops} we can write
\begin{equation*}
(\widehat{i}_e)_\ast\widehat{i}_e^\ast \cHH_E([X/T]) \simeq \widehat{i}_{\ast}\widehat{q}_{\ast}\cO_{\widehat{\cL}[t_0 X^{T(e)}/T]} \simeq \widehat{i}_{\ast}\widehat{j}^{\ast}q_{\ast}\cO_{\cL[t_0 X^{T(e)}/T]}
\end{equation*}
where $q: \cL [X/T] \to T$ is the natural map. 
\end{remark}
We denote $q_{\ast}\cO_{\cL[t_0 X^{T(e)}/T]}$ as $\cHH([X/T])$. The global sections of this sheaf over 
$$T\cong\Spec \mathrm{HH}([\Spec k/T])$$ are given by $\mHH([X/T])$. 
Call $\widehat{k}:\widehat{\ft}\rightarrow\ft$ the map from the completion at $0$ of the Lie algebra of $T$ to the Lie algebra of $T$, and by $\cH([X/T])$ the quasi-coherent sheaf on $\ft \cong \Spec \mathrm{H}_T(\ast)$ having as global sections  the $\mathbb{Z}_2$-periodized $T$-equivariant cohomology of $X^\an$, $\mathrm{H}_T^{\oplus,\ast}(X^\an)$. Then we have
\begin{align*}
& (\widehat{i}_e)_\ast\widehat{i}_e^\ast \cHH_E([X/T]) \simeq \widehat{i}_{\ast}\widehat{q}_{\ast}\cO_{\widehat{\cL}[t_0 X^{T(e)}/T]} \simeq \widehat{i}_{\ast}\widehat{j}^{\ast}\cHH([t_0 X^{T(e)}/T])\\
& (\widehat{i}_e)_\ast\widehat{i}_e^\ast\El_T(X^\an)\simeq\widehat{i}_{\ast}\widehat{k}^{\ast}\cH([t_0 (X^\an)^{T(e)}/T])
\end{align*}
as the completions of equivariant elliptic cohomology over $E_T$ compute Borel equivariant cohomology. If we replace Hochschild homology by periodic cyclic homology by taking Tate fixed points with respect to the canonical $S^1$-action, the two completions become equivalent by means of Chen's Theorem 4.3.2 in \cite{HChen}.

In the next section we explain how the natural $E$-action on $\Mapo{E}{[X/T]}$ induces $S^1$-actions on the adelic descent data of elliptic Hochschild homology, and use this action to  define the \emph{elliptic periodic cyclic homology} of $[X/T]$. We will show that this object recovers Grojnowski's equivariant elliptic cohomology of the analytification.

\section{The adelic Tate construction}\label{section:Tate}

\subsection{The action of the elliptic curve $E$} 
Let $\cX$ be a derived stack over $k$. The multiplication map $\mu:E\times E\to E$ induces a global $E$-action on $\Mapo{E}{\cX}$.

\begin{remark}
Let $X$ be a variety over a field $k$ of characteristic zero.
As explained in \cite{BZNLoopConn} the unit map
$$E\to\Aff{E}$$
is a group homomorphism. This implies that the canonical equivalence
$$\cL X\simeq\Mapo{\Aff{E}}{X}\to\Mapo{E}{X}$$
intertwines the $E$-action on the mapping stack on the right with the $\Aff{E}\simeq\Aff{S^1}\simeq B\bG_a$-action on the mapping stack on the left, i.e. the ``elliptic HKR isomorphism" of Proposition \ref{affinization}
\[
\Mapo{\Aff{E}}{X}\simeq\Mapo{E}{X}
\]
is equivariant with respect to the relevant actions.
\end{remark}

In the following lemma we characterize the $E$-action on the stack of quasi-constant maps from $E$ to $BT$.

\begin{lemma}\label{Lemma:trivaction}
The $E$-action on $\Mapo{E}{BT}$ induces a trivial action on the coarse moduli space $E_T$.
\end{lemma}
\begin{proof}
Without loss of generality, let us restrict to the case when $T$ has rank $1$. 
The triviality of the action on the coarse moduli space is a consequence of the fact that degree zero line bundles on elliptic curves can be presented as maps of abelian groups from $E$ to $B\bG_m$. 
In particular, consider the following maps:
\begin{itemize}
\item the action map 
$$E\times \Pic^0(E)\to\Pic^0(E)$$ 
that is adjoint to the map
$$\Mapo{E}{B\bG_m}\to\Mapo{E\times E}{B\bG_m}$$
induced by composition with the multiplication map $\mu : E\times E\to E$;
\item the map that classifies the box product of line bundles
$$\boxtimes:E\times\Pic^0(E)\to\Pic^0(E)$$
\end{itemize}
The latter is constructed by adjunction from the map
$$\alpha:\Mapo{E}{B\bG_m}\to\Mapo{E\times E}{B\bG_m}$$
which is obtained from the following composition:
$$E\times\Mapo{E}{B\bG_m}\times E\times\Mapo{E}{B\bG_m}\xrightarrow{\mathrm{ev}\times\mathrm{ev}}B\bG_m\times B\bG_m\xrightarrow{m}B\bG_m$$
In the above, $\mathrm{ev}$ is the evaluation map, and $m:B\bG_m\times B\bG_m\to B\bG_m$ is the multiplication on $B\bG_m$. Adjoining the composition $m\circ (\mathrm{ev}\times \mathrm{ev})$, we obtain
$$\beta:\Mapo{E}{B\bG_m}\times\Mapo{E}{B\bG_m}\to\Mapo{E\times E}{B\bG_m}$$
We further compose the above map with the diagonal $\Delta$:
$$\Mapo{E}{B\bG_m}\xrightarrow{\Delta}\Mapo{E}{B\bG_m}\times\Mapo{E}{B\bG_m}\xrightarrow{\beta}\Mapo{E\times E}{B\bG_m}$$
to obtain the map $\alpha$ whose adjoint $\boxtimes$ classifies the box product. The map $\boxtimes$ becomes equivalent to the projection to $\Pic^0(E)$, i.e. the trivial action map,
$$E\times\Pic^0(E)\to\Pic^0(E)$$
after passing to the coarse moduli space.

The condition that degree zero line bundles on elliptic curves correspond to maps of abelian groups from $E$ to $B\bG_m$ implies that the maps on the coarse moduli spaces induced by the action map and the map classifying the box product are isomorphic.
\end{proof}

\begin{remark}\label{remark:pEequivariant}
Since the $E$-action on $\Mapo{E}{\cX}$ is induced by the group structure on $E$, for any map of derived stacks $$f:\cX\to \cY$$ the map induced by composition 
$$\Mapo{E}{\cX}\to \Mapo{E}{\cY}$$
is $E$-equivariant. In particular, the structure map
$$p:\Mapo{E}{[X/T]}\to E_T$$
is $E$-equivariant. In the above, $E$ acts trivially on $E_T$ as this is the coarse moduli space of $\Pic^0(E)_T$, on which the action is trivial by Lemma \ref{Lemma:trivaction}.
\end{remark}

\subsection{Adelic Tate construction for elliptic cohomology}
We now describe how the global $E$-action allows us to perform a Tate construction on the sheaf $\cHH_{E}([X/T])$. 

Consider $\pi:E\times E_T\to E_T$ as a group scheme over $E_T$; this object acts on $\Mapo{E}{[X/T]}$ in the category of derived stacks over $E_T$. 
\begin{remark}
The pushforward  $\pi_{\ast}\cO_{E\times E_T}$ is a sheaf of Hopf algebras on $E_T$. The action relative to $E_T$ gives to $p_{\ast}\cO_{\Mapo{E}{[X/T]}}=\cHH_{E}([X/T])$ the structure of a comodule over this sheaf of Hopf algebras in $\QCoh(E_T)$.
\end{remark}
Over a field $k$ of characteristic zero, the equivalence $\Aff{E}\simeq\Aff{S^1}$ induces an equivalence 
$$\pi_{\ast}\cO_{E\times E_T}\simeq\pi_{\ast}\cO_{S^1\times E_T}$$
where $\pi:S^1\times E_T\to E_T$ is viewed as a group scheme over $E_T$. In particular, $\cHH_{E}([X/T])$ receives a comodule structure over $\pi_{\ast}\cO_{S^1\times E_T}$.

The elliptic curve $E$ acts trivially on $E_T$ itself according to lemma \ref{Lemma:trivaction}. This action induces an $S^1$-action on the category $\QCoh(E_T)$, by letting $S^1$ act trivially on $E_T$. In particular, by the discussion above, the $E$-action on $\Mapo{E}{[X/T]}$ induces a lift of $\cHH_{E}([X/T])$ to the $S^1$-invariant category $\QCoh(E_T)^{S^1}$. 

\begin{definition}
The \emph{elliptic periodic cyclic homology} of $[X/T]$
$$\cHP_{E}([X/T])$$
is the image of the pair given by $\cHH_{E}([X/T])$ and its $S^1$-action in the category $\QCoh(E_T)_{\bZ_2}\coloneqq\QCoh(E_T)\otimes_{k}k[\beta,\beta^{-1}]$.
\end{definition}

\begin{remark}
The $E$-action on $E_T$ restricts to each term of its adelic decoposition $\Spec\bA_{E_T}^\bullet$, and moreover the adelic descent data $\bA_{E}^{\bullet}(\cHH_{E}([X/T]))$ obtain an action of $S^1$ by similar arguments to those above. 
Let $\bA_{E}^{\bullet}(\cHH_{E}([X/T]))^{tS^1}$ be the cosimplicial object obtained by applying level-wise the Tate construction with respect to this $S^1$-action. We have the following equivalence:
$$\bA_{E}^{\bullet}(\cHP_{E}([X/T]))\simeq\bA_{E}^{\bullet}(\cHH_{E}([X/T]))^{tS^1}$$
This behaviour justifies the name \emph{adelic} Tate constuction.
\end{remark}

\begin{remark}\label{remark:coherence}
Under the assumption that $X$ is a smooth variety over a field $k$, the elliptic periodic cyclic homology of $[X/T]$ is a $\bZ_2$-periodic perfect complex on $E_T$. Indeed, by the discussion in section \ref{section:HELL}, the coherence of this complex is controlled by finite generation of $H_{T^\an}(X^\an)$ as a $H_{T^\an}(\ast)$-module.
As explained in the discussion in the proof of Corollary 4.3.21 in \cite{HChen}, $X$ being a quasi-compact algebraic space is sufficient for this finite generation requirement, as the $T^\an$-equivariant cohomology can be computed by a double complex whose $E_1$-page is given by $H(X)\otimes H_{T^\an}(\ast)$.
Moreover, as soon as $k=\bC$ the analytification of $X$ has the same homotopy type of a finite CW-complex, ensuring that Grojnowski's equivariant elliptic cohomology is also an object in the $\bZ_2$-periodic category $\Perf(E_T)_{\bZ_2}$. 
\end{remark}
\begin{remark}
On the other hand, by the same arguments as in Remark \ref{remark:coherence}, the coherence of the complex $\cHH_E([X/T])$ is controlled by finite generation of the Hochschild homology of $X$. In particular, a sufficient condition for the coherence of $\cHH_E([X/T])$ is that $X$ is proper. 
\end{remark}

\subsubsection{The rank one case}
Since the proof of the comparison theorem is an induction on the rank of the torus, we start by proving it for tori of rank $1$. 

\begin{proposition}[Comparison theorem, rank one case]\label{prop:Comparisonrk1}
Let $k=\bC$. Let $T$ be an algebraic torus of rank $1$ acting on a smooth variety $X$.
We have an isomorphism of $\bZ_2$-periodic coherent sheaves on $E$
$$\cHP_E([X/T])\simeq\El_{T^{\an}}(X^{\an})$$
where $\El_{T^{\an}}(X^{\an})$ denotes complexified equivariant elliptic cohomology  of the analytification of $X$. Moreover, this equivalence is natural with respect to $X$.
\end{proposition}
\begin{proof}
The proof of this theorem is based on adelic descent in dimension one. 

Let us start with the ad\`eles with respect to closed points $e$ of $E$. In this case, the ad\`eles are given by completion at such points. In particular, Theorem \ref{theorem:completionsgeneral} gives us the desired equivalence. The ad\`ele given by completion at the generic point corresponds to the generic fiber. The equivalence of such ad\`eles comes from observing that the generic fiber can be computed via the localization theorem. Let $c:E\to\Spec k$ be the structure morphism. Then we have a canonical isomorphism
$$\tilde{j}_{\eta}\cHP_{E}([X/T])\simeq \tilde{j}_{\eta}c^{\ast}\mHP(t_0 X^{T})$$
for elliptic Hochschild homology, and similarly 
$$\tilde{j}_{\eta}\El_{T^{\an}}(X^{\an})\simeq \tilde{j}_{\eta}c^{\ast}C_{dR}^{\oplus,\ast}((t_0 X^{T})^{\an})$$
for equivariant elliptic cohomology. Indeed, the localization theorem gives us a canonical equivalence
$$\tilde{j}_{\eta}\cHP([X/T])\simeq\tilde{j}_{\eta}\cHP([t_0X^{T(\eta)}/T])$$
and since $T(\eta)=T$, $T$ acts trivially on $t_0 X^T$ and we have canonical equivalences
$$\tilde{j}_{\eta}\cHP([t_0X^{T(\eta)}/T])\simeq\tilde{j}_{\eta}c^\ast\cHP([t_0X^{T(\eta)}/T'(\eta)])\simeq\tilde{j}_{\eta}c^{\ast}\mHP(t_0 X^{T})$$
The HKR theorem, as in Proposition 4.4 of \cite{BZNLoopConn} induces an equivalence
$$c^{\ast}\mHP(t_0 X^{T})\simeq c^{\ast}C_{dR}^{\oplus,\ast}((t_0 X^{T})^{\an})$$
which in turn gives an equivalence of the ad\`eles with respect to the generic point. 

Similarly, for a chain $\Delta=(\eta,x)$ where $\eta$ is the generic point and $x$ is a closed point, note that 
$$\bA_{E}(\Delta, \cF)=\bA_{E}(x, \tilde{j}_{\eta}\cF)$$
for a perfect complex $\cF$.
%
The same HKR theorem induces a canonical equivalence
$$\tilde{j}_{\eta}\cHP([X/T])\simeq\tilde{j}_{\eta}\El_{T^{\an}}(X^{\an})$$
and thus we have that 
$$\bA_{E}(\Delta, \cHP_E([X/T]))\simeq \bA_{E}\left (\Delta, \El_{T^{\an}}(X^{\an})\right )$$
Since this equivalence is induced by a canonical isomorphism of perfect complexes, it is compatible with the coface map corresponding to removing the point $x$ from the chain $\Delta$. Compatibility with the coface map induced by removing the point $\eta$ has to be tested separately. In particular, we need to check that the following diagram commutes:
$$\xymatrix{
\bA_{E}\left ((x), \cHP_E([X/T])\right )\ar[r]\ar[d] & \bA_{E}((x), \El_{T^{\an}}(X^{\an}))\ar[d] \\
\bA_{E}(\Delta, \cHP_E([X/T]))\ar[r] & \bA_{E}(\Delta, \El_{T^{\an}}(X^{\an}))
}
$$
We can rewrite the above diagram as
$$\xymatrix{
\cHP_E([X/T])_{\widehat{x}}\ar[r]\ar[d] & \El_{T^{\an}}(X^{\an})_{\widehat{x}}\ar[d] \\
\mHP(t_0(X^T))\otimes_{k}\Frac\cO_{E,\widehat{x}}\ar[r] & C_{dR}^{\oplus,\ast}(t_0(X^T)^\an)\otimes_{k}\Frac\cO_{E,\widehat{x}}
}
$$
since the ad\`eles with respect to the chain $\Delta$ correspond to
$$\left (\tilde{j}_{\eta}\cHP_E([X/T])\right )_{\widehat{x}}\simeq\left (\mHP(t_0(X^T))\otimes_{k}k(E)\right )\otimes_{\cO_{E,x}}\cO_{E,\widehat{x}}\simeq\mHP(t_0(X^T))\otimes_{k}\Frac\cO_{E,\widehat{x}}$$
Commutativity of this diagram then follows from the compatibility of the HKR isomorphisms as in Proposition 4.4 of \cite{BZNLoopConn} and Theorem 4.3.2 of \cite{HChen} with pullbacks and with the base change to $k(E)$. 
Indeed, we can rewrite $\mHP(t_0(X^T))\otimes_{k}\Frac\cO_{E,\widehat{x}}$ as 
$$\mHP(t_0(X^T))\otimes_{k}\Frac\cO_{E,\widehat{x}}\simeq\mHP([t_0(X^T)/T])_{\widehat{1}}\otimes_{\cO_{E,x}}k(E)$$
and $C_{dR}^{\oplus,\ast}(t_0(X^T)^\an)\otimes_{k}\Frac\cO_{E,\widehat{x}}$ as 
$$C_{dR}^{\oplus,\ast}(t_0(X^T)^\an)\otimes_{k}\Frac\cO_{E,\widehat{x}}\simeq C_{dR}^{\prod,\ast}(t_0(X^T)^\an)\otimes_{\cO_{E,x}}k(E)$$

We now show the naturality of the equivalence with respect to $X$. We need to show that if $f: Y\to X$ is a map, the following diagram commutes
$$\xymatrix{
\cHP_E([X/T])\ar[r]\ar[d] & \El_{T^{\an}}(X^{\an})\ar[d] \\
\cHP_E([Y/T])\ar[r] & \El_{T^{\an}}(Y^{\an})
}
$$
The corresponding diagram of ad\`eles commutes as the vertical maps are given by pullback in periodic cyclic homology (for the left arrow) and de Rham cohomology (for the right arrow); commutativity follows specifically from the compatibility of the HKR theorem with pullbacks, both in its form as Proposition 4.4 in \cite{BZNLoopConn} and as Theorem 4.3.2 in \cite{HChen}.
\end{proof}

\subsubsection{The general case}

We proceed with the proof of the main result of this Section. As already anticipated, the general case follows inductively from the rank 1 case.

\begin{theorem}[Comparison theorem]\label{theorem:comparisongeneral}
Let $k=\bC$. Let $T$ be an algebraic torus of rank $n$ acting on a smooth quasi-projective variety $X$.
There is an isomorphism of $\bZ_2$-periodic coherent sheaves on $E$
$$\cHP_E([X/T])\simeq\El_{T^{\an}}(X^{\an})$$
where $\El_{T^{\an}}(X^{\an})$ denotes the complexified equivariant elliptic cohomology   of the analytification of $X$. Moreover, this equivalence is natural with respect to $X$.
\end{theorem}

\begin{proof}
We reason by induction, as anticipated. 

Our inductive hypothesis gives us an equivalence $$\cHP_E([X/K])\simeq\El_{K^{\an}}(X^{\an})$$ for any torus $K$ of rank strictly smaller than $n$. Further, this equivalence is natural with respect to $X$. The base case when $\mathrm{rk}(T)=1$ was proved as Proposition \ref{prop:Comparisonrk1}.
Now let $T$ be an algebraic torus of rank $n$. We will produce an equivalence 
$$\cHP_E([X/T])\simeq\El_{T^{\an}}(X^{\an})$$
natural with respect to $X$. 
As in the rank one case, we use adelic descent. 
In the case of closed points $e\in E_T$, the equivalence
$$\bA_{E_T}\left ((e), \cHP_E([X/T])\right )\simeq\bA_{E_T}\left ((e), \El_{T^{\an}}(X^{\an})\right )$$
is Theorem \ref{theorem:completionsgeneral} together with Theorem 4.3.2 in \cite{HChen}. Now let $\Delta=(x>x_1>\dots>x_k)\in |E_T|_{k}$ be a chain of length $k>1$ on $E_T$, and let $\Delta '=(x_1>\dots>x_k)$. By definition we have
$$\bA_{E_T}\left (\Delta, \cHP_E([X/T])\right )\simeq \lim_{r\geq 0}\bA_{E_T}\left (\Delta', \tilde{j}_{rx}\cHP_E([X/T])\right )$$
The localization theorem provides an equivalence
$$\tilde{j}_{rx}\cHP_E([X/T])\simeq\tilde{j}_{rx}c_x^\ast\cHP_E([t_o X^{T(x)}/T'(x)])$$
By the inductive hypothesis there is an equivalence 
$$\cHP_E([t_o X^{T(x)}/T'(x)])\simeq\El_{T'(x)^{\an}}(t_0(X^{T(x)})^{\an})$$
Indeed $x$ is not a closed point, hence the rank of $T'(x)$ is necessarily smaller than $n$. Finally, this equivalence of quasi-coherent complexes induces an equivalence
$$\lim_{r\geq 0}\bA_{E_T}\left (\Delta', \tilde{j}_{rx}c_x^\ast\cHP_E([X/T])\right )\simeq\lim_{r\geq 0}\bA_{E_T}\left (\Delta', \tilde{j}_{rx}c_x^\ast\El_{T'(x)^\an}(t_0(X^{T(x)})^\an)\right )$$
which, by the computation carried out above, means that we obtain by composition a canonical isomorphism
$$\phi_{\Delta}:\bA_{E_T}\left (\Delta, \cHP_E([X/T])\right )\simeq \bA_{E_T}\left (\Delta, \El_{T^\an}(X^\an)\right )$$
All the equivalences between adelic groups produced by the above argument come from equivalences of objects in $\Perf(E_T)_{\bZ_2}$, thus they are all compatible with the coface maps induced by the operation of removing a point $x_i$ from the chain $\Delta$, for $i\in \{1,\dots,k\}$. The coface map induced by removing $x$ has to be treated separately: in this case we cannot reduce to the sheaves $\tilde{j}_{rx}\cHP_E([X/T])$ for one of the two adelic groups involved. 
Indeed, by definition, the ad\`ele $\bA_{E_T}\left (\Delta',\cHP_E([X/T])\right )$ is a limit of ad\`eles of the sheaves $\tilde{j}_{rx_1}\cHP_E([X/T])$ rather than $\tilde{j}_{rx}\cHP_E([X/T])$.

Let us consider now the case of the coface map induced by removing the point $x$ from the chain $\Delta$. For this specific case, we switch to the complexes of ad\`eles as opposed to their global sections. 
We need to check the commutativity of the following diagram:
$$\xymatrix{
\bfA_{E_T}(\Delta', \cHP_E([X/T]))\ar[r]\ar[d] & \bfA_{E_T}(\Delta', \El_{T^{\an}}(X^{\an}))\ar[d] \\
\bfA_{E_T}(\Delta, \cHP_E([X/T]))\ar[r] & \bfA_{E_T}(\Delta, \El_{T^{\an}}(X^{\an}))
}
$$
where
\begin{itemize}
\item $\Delta'$ is obtained from $\Delta$ by removing $x$  \item  the horizontal arrows are the isomorphisms obtained above via the inductive hypothesis 
\item the vertical arrows are the coface maps
\end{itemize} For the sheaves of ad\`eles we have a decomposition
$$\bfA_{X}(\Delta, \cF)\simeq\cF\otimes_{\cO_X}\bfA_{X}(\Delta, \cO_X)$$
which allows us to write the above diagram as 
$$\xymatrix{
\bfA_{E_T}(\Delta', \cHP_E([X/T]))\ar[r]^{\simeq}\ar[d]^{\simeq} & \bfA_{E_T}(\Delta', \El_{T^{\an}}(X^{\an}))\ar[d]^{\simeq} \\
c_{x_1}^{\ast}\cHP_E([t_0(X^{T(x_1)})/T'(x_1)])\otimes_{\cO_X}\bfA_{X}(\Delta', \cO_X)\ar[r]^{\simeq}\ar[d] & c_{x_1}^{\ast}\El_{T'(x_1)^{\an}}(t_0(X^{T(x_1)})^{\an})\otimes_{\cO_X}\bfA_{X}(\Delta', \cO_X)\ar[d] \\
c_{x}^{\ast}\cHP_E([t_0(X^{T(x)})/T'(x)])\otimes_{\cO_X}\bfA_{X}(\Delta, \cO_X)\ar[r]^{\simeq}\ar[d] & c_{x}^{\ast}\El_{T'(x)^{\an}}(t_0(X^{T(x)})^{\an})\otimes_{\cO_X}\bfA_{X}(\Delta, \cO_X)\ar[d] \\
\bfA_{E_T}(\Delta, \cHP_E([X/T]))\ar[r]^{\simeq} & \bfA_{E_T}(\Delta, \El_{T^{\an}}(X^{\an}))
}
$$
Since $x>x_1$, $c_x$ factors as the composition
$$c_x:E_T\xrightarrow{c_{x_1}}E_{T'(x_1)}\xrightarrow{c_{x_1,x}}E_{T'(x)}$$
In particular, the two middle vertical maps in the diagram factor as the tensor product of a pullback map 
along the inclusion $t_0(X^{T(x)})\to t_0(X^{T(x_1)})$
and the coface map for the ad\`eles of the structure sheaf. If $k>1$ or $k=1$ and $x_1$ is not closed, the bottom and top squares commute. By inductive hypothesis on naturality, the middle square also commutes, and thus we obtain the desired equivalence.

If $k=1$ and $x_1$ is closed, $T'(x_1)$ might have the same rank as $T$. In this situation, we apply Proposition 3.2.1 in \cite{HubAd}:
$$\bA_{E_T}\left ((x_0,x_1),\cHP_{E}([X/T])\right )=C_{x_0}S_{x_0}^{-1}C_{x_1}S_{x_1}^{-1}\cHP_{E}([X/T])$$
where $S_{p}^{-1}$ is localization at $p$ and $C_{p}$ is the functor that sends a quasi-coherent complex 
$$M \simeq \colim_{i}N_i$$ with $N_i$ perfect complex, to $\colim_{i}N_{i,\widehat{p}}$. The coface map is the natural map
$$C_{x_1}S_{x_1}^{-1}\cHP_{E}([X/T])\to C_{x_0}S_{x_0}^{-1}C_{x_1}S_{x_1}^{-1}\cHP_{E}([X/T])$$
and similarly for elliptic cohomology.
The inductive hypothesis and localization identify the isomorphism $C_{x_1}S_{x_1}^{-1}\cHP_{E}([X/T])\simeq C_{x_1}S_{x_1}^{-1}\El_{T^\an}(X^\an)$ with the HKR isomorphism (together with Theorem \ref{theorem:completionsgeneral}), which implies that the relevant diagram commutes.

To finish the argument, we need to prove naturality.
This follows from the inductive hypothesis in the following way: choose $f:Y\to X$ and consider the diagram of ad\`eles associated to naturality, for a chain $\Delta\in |E_T|_k$:
$$\xymatrix{
\bA_{E_T}\left (\Delta,\cHP_E([X/T])\right )\ar[r]\ar[d] & \bA_{E_T}\left (\Delta,\El_{T^{\an}}(X^{\an})\right )\ar[d] \\
\bA_{E_T}\left (\Delta,\cHP_E([Y/T])\right )\ar[r] & \bA_{E_T}\left (\Delta,\El_{T^{\an}}(Y^{\an})\right )
}
$$
By localization, the above diagram is obtained by applying the functor $\bA_{E_T}(\Delta,-)$ to the diagram
$$\xymatrix{
c_x^\ast\cHP_E([t_0 X^{T(x)}/T'(x)])\ar[r]\ar[d] & c_x^\ast\El_{T'(x)^{\an}}((t_0 X^{T(x)})^{\an})\ar[d] \\
c_x^\ast\cHP_E([t_0 Y^{T(x)}/T'(x)])\ar[r] & c_x^\ast\El_{T'(x)^{\an}}((t_0 Y^{T(x)})^{\an})
}
$$
which commutes by the inductive hypothesis. 
\end{proof}

From Theorem \ref{theorem:comparisongeneral} we deduce the following corollary:

\begin{corollary}
Let $k=\bC$. Let $T$ be an algebraic torus of rank $n$ acting on a smooth quasi-projective variety $X$.
We have an isomorphism of $\bZ_2$-periodic coherent sheaves on $E$
$$\pi_{\ast}\cHP_E([X/T])\simeq\pi_\ast\El_{T^{\an}}(X^{\an})=\El_{T^{\an}}^{\ast}(X^{\an})$$
where $\pi_{\ast}$ denotes homotopy sheaves; $\El_{T^{\an}}^{\ast}(X^{\an})$ denotes the collection of homotopy sheaves of complexified equivariant elliptic cohomology of the analytification of $X$, i.e. the classical version of equivariant elliptic cohomology due to Grojnowski. Moreover, this equivalence is natural with respect to $X$. 
\end{corollary}

\begin{remark}
We expect elliptic Hochschild homology to encode 2-categorical information on the stack $[X/T]$.   This is the most exciting future direction of our work, as it could shed light on the much studied problem of constructing geometric representatives of elliptic cocycles. As a  reality check we remark that, in contrast with ordinary Hochschild homology, $\cHH_E([X/T])$ and $\cHP_E([X/T])$  are \emph{not} invariants of $\Perf([X/T])$. This follows immediately from Theorem \ref{theorem:comparisongeneral} and the main result of \cite{SchSib}. The category $\Perf([X/T])$  can be viewed, in a sense, as the  universal recipient of 1-categorical information on $[X/T]$: hence, the fact that equivariant elliptic cohomology is not an invariant of $\Perf([X/T])$  confirms the expectation that elliptic Hochschild homology detects information which is not 1-categorical in nature.  
\end{remark}

\section{The case of reductive G} 
\label{sect:ReductiveG}
In this section we extend our construction from the case of an algebraic torus to that of a reductive group $G$. We present two approaches. The first one follows closely Grojnowski's original definition (Definition \ref{def:HHTW}). We will then sketch an alternative construction, which is arguably preferable, as it is fully intrinsic (Definition \ref{def:HHTW2}). We stress however that in this paper we will limit ourselves to give a sketch of the comparison between these two definitions of equivariant elliptic Hochschild homology; a complete argument will appear in future work.  

Let $k$ be a field of characteristic 0. Let $G$ be a reductive group over  $k$ acting on a smooth quasi-projective $k$-variety $X$, and let $T$ be a maximal torus. Following Grojnowski's approach (see Section \ref{subsection:PerlimGroj} for notation), we give the following definition:

\begin{definition}\label{def:HHTW}
We define the \emph{elliptic Hochschild homology} of $[X/G]$ to be
$$\cHH_E([X/G]):=\cHH_E([X/T])^W\in \QCoh(E_G)$$
where $W$ is the Weyl group of $G$.
Similarly, we define the \emph{elliptic periodic cyclic homology} of $[X/G]$ as
$$\cHP_E([X/G]):=\cHP_E([X/T])^W$$
\end{definition}
As a consequence of Theorem \ref{theorem:comparisongeneral} and of Definition \ref{def:HHTW} we obtain the following Corollary.
\begin{corollary}\label{cor:comparisonG}
Let $X$ be a smooth quasi-projective variety with an action of a reductive group $G$. 
Then there is an equivalence
$$\cHP_E([X/G])\simeq\El_G(X^\an)$$
\end{corollary}

We proceed next to sketch a more intrinsic approach to  $G$-equivariant elliptic Hochschild homology in terms of quasi-constant maps to $[X/G]$. 
First of all, we consider the stack 
$$\Mapo{E}{[X/G]}$$
of quasi-constant maps from $E$ to $[X/G]$. This stack is defined analogously to Definition \ref{quasiconstant}, and naturally lives over $\Bun^0_G(E)$.
The next step is to restrict our attention to the stack of \emph{semistable} quasi-constant maps which is given by the pullback along the inclusion 
$$\Bun_G^{0}(E)^{ss}\to\Bun_G^0(E)$$
of the semi-stable locus in the stack of degree zero principal $G$-bundles on $E$
$$
\xymatrix{
\Mapo{E}{[X/G]}^{ss} \ar[r] \ar[d] & \Mapo{E}{[X/G]} \ar[d] \\
\Bun_G^{0}(E)^{ss}\ar[r] & \Bun_G^0(E)
}
$$

In Definition \ref{def:HHTW2} below, we give an alternative approach to the  elliptic Hochschild homology  of $[X/G]$  in terms of semistable quasi-constant maps to $[X/G]$.  We stress that the identification between Definition \ref{def:HHTW2} and Definition \ref{def:HHTW} has to be regarded for the moment as conjectural, as we will only explain the main ingredients that should go into the comparison; a complete argument will  appear in future work.

The next definition makes sense for any algebraic group $G$, not necessarily reductive. Denote by $p$ the composition
\[
p:\Mapo{E}{X/G}^{ss}\to\Mapo{E}{BG}^{ss}\to E_G
\]
where $E_G$ denotes the coarse moduli space of semistable degree $0$ principal $G$ bundles on $E$.
\begin{definition}
\label{def:HHTW2}
The (intrinsic) \emph{elliptic Hochschild homology} of $[X/G]$ is defined as 
$$\cHH_E^{int}([X/G]):=p_\ast\cO_{\Mapo{E}{[X/G]}^{ss}}\in\QCoh(E_G)$$
and the (intrinsic) \emph{elliptic periodic cyclic homology} of $[X/G]$, 
$$\cHP_E^{int}([X/G])$$
is the image of the pair $\cHH_E^{int}([X/G])$ and its natural $S^1$-action in the category $\QCoh(E_G)_{\bZ_2}$.
\end{definition}
The $S^1$-action appearing in the previous definition is constructed as in Section \ref{section:Tate}.

\begin{remark}
When $G=T$ is a torus, 
$$\cHH_E^{int}([X/T])\simeq\cHH_E([X/T])$$
\end{remark}

\begin{remark}
Similarly to the case of abelian groups, $G$-equivariant elliptic Hochschild and periodic cyclic homology satisfy all expected functorialities. Namely, for every $G$-equivariant map $X\to Y$ we obtain a map  
\[
\cHH_E([Y/G])\to \cHH_E([X/G])
\]
Further, a group homomorphism $f:H\to G$  induces a map $\phi_f:E_H\to E_G$,
\[
\cHH_E([X/G])\to (\phi_f)_\ast\cHH_E([X/H])
\]
and analogously for elliptic periodic cyclic homology.
\end{remark}

We conjecture the following relation between Definitions \ref{def:HHTW} and \ref{def:HHTW2}.

\begin{conjecture}\label{conjecture:HPG}
Let $G$ be a reductive group. Let $X$ be a smooth quasi-projective variety acted on by an algebraic group $G$. Then there is an equivalence of objects in $\Coh(E_G)_{\bZ_2}$
$$\cHP_E([X/G])\simeq\cHP_E^{int}([X/G])$$
In particular,
$$\cHP_E^{int}([X/G])\simeq\El_G(X^\an)$$
\end{conjecture}

We plan to return to Conjecture \ref{conjecture:HPG}  in future work. We conclude the paper by explaining in some detail a proof   strategy towards Conjecture \ref{conjecture:HPG}, which is inspired by classical  work of Thomason in the context of algebraic K-theory and G-theory. The argument that we will present crucially relies on Conjectures \ref{conjecture:HHEB} and \ref{conjecture:HHEG} below, which successively reduce the problem from the group $G$ to a Borel subgroup $B$ and finally to a maximal torus $T$.

In what follows, we fix a maximal torus $T$ of $G$ and a Borel $B$ containing $T$.

\begin{conjecture}[Reduction from $B$ to $T$]\label{conjecture:HHEB}
Let $X$ be a smooth quasi-projective variety over $k$ with an action of $G$. Then natural map
$$\cHP_E^{int}([X/B])\to\cHP_E([X/T])$$
is an equivalence.
\end{conjecture}
A proof of Conjecture \ref{conjecture:HHEB} would follow immediately from the homotopy-invariance properties of elliptic periodic cyclic homology, which we intend to establish  in future work.

The next step is the reduction of $\cHH_E^{int}([X/G])$ to the quotient by the Borel subgroup. 
\begin{conjecture}[Reduction from $G$ to $B$]\label{conjecture:HHEG}
Let $X$ be a variety over $k$ with an action of $G$. Let $W$ be the Weyl group. The natural map
$$\cHH_E^{int}([X/G])\to\phi_\ast\cHH_E^{int}([X/B])$$
induces an equivalence
$$\cHH_E^{int}([X/G])\simeq\cHH_E^{int}([X/B])^W.$$
\end{conjecture}
In the statement above $\phi_*$ is the pushforward  along the map
$$\phi:E_T\simeq E_B\to E_G$$
 which is induced by the inclusion $T < G$. Also, we view the invariants $\cHH_E^{int}([X/B])^W$ as an object of $\QCoh(E_G)$ via the identification  
$E_G\simeq E_T//W$, which is  
due to Laszlo \cite{Laszlo}. 
\begin{remark}\label{remark:Intrinsic_T}
Note that the identification $E_T\simeq E_B$, which is used in the discussion above, holds because the coarse moduli space of the stack of bundles is insensitive to unipotent bundles, hence it  does not distinguish $T$ from $B$.
\end{remark}
We expect Conjecture \ref{conjecture:HHEG} to follow from the geometric properties of the equivariant elliptic Grothendieck--Springer resolution
$$\Bun_{B}^{0}(E)^{ss}\to\Bun_{G}^{0}(E)^{ss}.$$ Let us explain this in some more detail. We start by observing that we have an identification 
$$[X/B]\simeq [X\times^{B}G/G]=[(X\times G/B)/G]$$
which implies that there is an equivalence
$$\phi_\ast\cHH_E^{int}([X/B])\simeq\cHH_E^{int}([(X\times G/B)/G]).$$ 
Since $\Mapo{E}{-}$ preserves pullbacks, we obtain a pullback diagram
$$\xymatrix{
\Mapo{E}{[(X\times G/B)/G]}^{ss}\ar[r]^{\beta} \ar[d]_{\alpha}  & \Mapo{E}{[(G/B)/G]}^{ss} \ar[d]^{\gamma} \\
\Mapo{E}{[X/G]}^{ss} \ar[r]^{p''} & \Bun^{0}_G(E)^{ss}
}
$$
of the restriction to the semistable locus. We observe that, as 
$$\Mapo{E}{[(G/B)/G]}^{ss} \simeq \Bun_B^{0}(E)^{ss}$$
the map $\gamma$ is the equivariant elliptic Grothendieck--Springer resolution of \cite{BZNEllipticGS}. 

The crux of the argument would be to show that the elliptic Grothendieck--Springer resolution has the property that 
$$\gamma_\ast\cO_{\Mapo{E}{[(G/B)/G]}^{ss}}\simeq\cO_{\Bun^{0}_G(E)^{ss}}\otimes_{k} \mHH(G/B)$$
We  will give a rigorous proof of this fact in future work; here we will limit ourselves to explain how this implies the desired claim. Note indeed that, setting   $p'=p''\circ\alpha$, we obtain an  equivalence
$$p'_\ast\cO_{\Mapo{E}{[(X\times G/B)/G]}^{ss}}\simeq p''_\ast\cO_{\Mapo{E}{[X/G]}^{ss}}\otimes_{k}\mHH(G/B)$$
The conclusion follows from the computation of the $W$-invariants on both sides of this identification, and then by applying pushforward along the map $\chi : \Bun^{0}_G(E)^{ss}\to E_G$ 
$$\cHH_E^{int}([X/G])\simeq\cHH_E^{int}([X/B])^W.$$

 Conjecture \ref{conjecture:HPG}  follows immediately from the arguments above. Indeed, 
combining Conjecture \ref{conjecture:HHEB} and  \ref{conjecture:HHEG} we obtain an equivalence
$$\cHP_E^{int}([X/G])\simeq\cHP_E^{int}([X/T])^W.$$
By Remark \ref{remark:Intrinsic_T}
$$\cHP_E^{int}([X/T])^W\simeq\cHP_E([X/T])^W$$
which combined with Definition \ref{def:HHTW} yields
$$\cHP_E^{int}([X/G])\simeq\cHP_E([X/T])^W=\cHP_E([X/G])\simeq\El_G(X^\an)$$
as desired.

\end{document}